\newtheorem{theorem}{Theorem}[section]
\newtheorem{lemma}{Lemma}[section]
\newtheorem{corollary}{Corollary}[section]
\newtheorem{definition}{Definition}[section]
\newtheorem{proposition}{Proposition}[section]
\newtheorem{remark}{Remark}[section]
\begin{document}
	
\bibliographystyle{abbrv}
	
\title{The Minimal Binomial Multiples of Polynomials over Finite Fields}
	
\author{Li Zhu$^{1}$ and Hongfeng Wu$^{2}$\footnote{Corresponding author.}
\setcounter{footnote}{-1}
\footnote{E-Mail addresses:
lizhumath@pku.edu.cn (L. Zhu), whfmath@gmail.com (H. Wu)}
\\
{1.~School of Mathematical Sciences, Guizhou Normal University, Guiyang, China}
\\
{2.~College of Science, North China University of technology, Beijing, China}}
	
\date{}
\maketitle
	
\thispagestyle{plain}
\setcounter{page}{1}	
	
\begin{abstract}
	Let $f(X)$ be a nonconstant polynomial over $\mathbb{F}_{q}$, with a nonzero constant term. The order of $f(X)$ is a classical notion in the theory of polynomials over finite fields, and recently the definition of freeness of binomials of $f(X)$ was given in \cite{Mart\'{i}nez}. Generalizing these two notions, we introduce the definition of the minimal binomial multiple of $f(X)$ in this paper, which is the monic binomial with the lowest degree among the binomials over $\mathbb{F}_{q}$ divided by $f(X)$. Based on the equivalent characterization of binomials via the defining sets of their radicals, we prove that a series of properties of the classical order can be naturally generalized to this case. In particular, the minimal binomial multiple of $f(X)$ is presented explicitly in terms of the defining set of the radical of $f(X)$. And a criterion for $f(X)$ being free of binomials is given. As an application, for any positive integer $N$ and nonzero element $\lambda$ in $\mathbb{F}_{q}$, the $\lambda$-constacyclic codes of length $N$ with minimal distance $2$ are determined.
\end{abstract}

\section{Introduction}
Let $\mathbb{F}_{q}$ be a finite field with $q$ elements, and $f(X)$ be a nonconstant polynomial with a nonzero constant term over $\mathbb{F}_{q}$. The order of $f(X)$ is defined to be the smallest positive integer $m$ such that $f(X) \mid X^{m}-1$. It is a classical and important notion in the theory of polynomials over finite fields.

Recently, motivated by the study of primitive $k$-normal elements in finite fields, Mart\'{i}nez, Reis and Ribas \cite{Mart\'{i}nez} introduced the definition of freeness of binomials. The polynomial $f(X)$ is said to be free of binomials if it does not divide any binomial with degree less than $m = \mathrm{ord}(f)$. The problem to explore the connection between the polynomials free of binomials and constacyclic codes is proposed in \cite{Mart\'{i}nez}.

A first answer to this problem may be given as follow. Let $f(X)$ be a divisor of $X^{m}-\lambda$, where $\lambda$ is a nonzero element in $\mathbb{F}_{q}$. Then $f(X)$ gives rise to a $\lambda$-constacylic code $C = (f(X))$ of length $m$. This code has minimal distance $2$ if and only if $f(X)$ divides a binomial with degree less than $m$. In particular, if $\lambda=1$, this condition amounts to that either the order of $f(X)$ is less than $m$ or $f(X)$ is not free of binomials.

To depict the constacyclic codes of minimal distance $2$ in general, it is natural to consider the binomial with the lowest degree among the binomials divided by $f(X)$. Hence we propose the following definitions. Clearly, this definition is also a direct generalization of the order of $f(X)$.

\begin{definition}
	Let $f(X)$ be a nonconstant polynomial over $\mathbb{F}_{q}$ with a nonzero constant term. Define the minimal binomial multiple of $f(X)$ to be the monic binomial $X^{n}-\lambda \in \mathbb{F}_{q}[X]$ which is of the lowest degree among the binomials over $\mathbb{F}_{q}$ divided by $f(X)$. The degree $n$ is called the minimal binomial order of $f(X)$, and is denoted by $\mathrm{ord}^{\mathrm{b}}(f) = n$.
\end{definition}

In the present manuscript, we first generalize the correspondence between equal-difference cyclotomic cosets and irreducible binomials over $\mathbb{F}_{q}$, which is given in \cite{Zhu 2}, to the general case as follow.

\begin{theorem}
	Let $f(X)$ be a nonconstant polynomial over $\mathbb{F}_{q}$ that is coprime to $X$, and let $m$ be the order of the radical $\mathrm{rad}(f)$ of $f(X)$. Then $f(X)$ is a binomial if and only if there is a nonnegative integer $v$ such that $f(X) = \mathrm{rad}(f)^{p^{v}}$ and the defining set $T_{\mathrm{rad}(f)}$ of $\mathrm{rad}(f)$ is an equal-difference subset of $\mathbb{Z}/m\mathbb{Z}$.
\end{theorem}
 
Based on this equivalent characterization of binomials, we study the basic properties of the minimal binomial multiple of a nonconstant polynomial $f(X)$ with a nonzero constant term. In particular, we present the minimal binomial order and the minimal binomial multiple of $f(X)$ explicitly, in terms of the defining set of the radical of $f(X)$. 

\begin{theorem}
	Let $f(X)$ be a nonconstant polynomial over $\mathbb{F}_{q}$, which is coprime to $X$, with the irreducible factorization over $\mathbb{F}_{q}$ given by
	$$f(X) = f_{1}(X)^{h_{1}}\cdots f_{t}(X)^{h_{t}}.$$
	Let $m$ be the order of $\mathrm{rad}(f)$, and $c_{m/q}(\gamma_{i})$ be the $q$-cyclotomic coset modulo $m$ defining the irreducible factor $f_{i}(X)$ for $1 \leq i \leq t$. Set $u$ to be the smallest integer such that $p^{u} \geq h_{i}$ for all $i = 1,\cdots,t$, $d_{i} = \gamma_{i+1}-\gamma_{i}$ for $i=1,\cdots,t-1$, and
	$$d_{f} = \mathrm{gcd}(d_{1},\cdots,d_{t-1},m,q-1).$$
	Then the minimal binomial order of $f(X)$ is $p^{u}\cdot\frac{m}{d_{f}}$, and the minimal binomial multiple of $f(X)$ is given by
	$$X^{\frac{p^{u}m}{d_{f}}}-\zeta_{m}^{\frac{p^{u}\gamma m}{d_{f}}},$$
	where $\gamma$ can be choose to be any $\gamma_{i}$, $1 \leq i \leq t$.
\end{theorem}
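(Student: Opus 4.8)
The plan is to reduce the problem to Theorem 1.1, which characterizes exactly when a polynomial is a binomial, and then to optimize the degree over all binomial multiples of $f(X)$. First I would establish the preliminary reduction: if $f(X) \mid X^n - \lambda$, then $\mathrm{rad}(f) \mid X^n - \lambda$, and since $X^n - \lambda$ is separable exactly when $p \nmid n$, any binomial multiple of $f$ must have exponent divisible by a suitable power of $p$ to absorb the multiplicities $h_i$. More precisely, I expect that $f(X) \mid X^n - \lambda$ forces $\mathrm{rad}(f)^{p^u} \mid X^n - \lambda$ where $p^u$ is the smallest $p$-power dominating all the $h_i$, because over $\overline{\mathbb{F}_q}$ the binomial $X^n - \lambda$ factors as a product of $(X - \alpha)^{p^w}$ with $p^w \| n$; hence $n = p^w n'$ with $p \nmid n'$ and we need $p^w \geq \max h_i$, i.e.\ $p^w$ is a multiple of $p^u$. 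Writing $n = p^u n'' $ is not quite forced, but $p^u \mid p^w$, so it suffices to find the minimal admissible $n'$ (coprime-to-$p$ part) and then the minimal $p$-power is $p^u$.

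Next I would handle the squarefree core. By the previous paragraph, $\mathrm{ord}^{\mathrm{b}}(f) = p^u \cdot \mathrm{ord}^{\mathrm{b}}(\mathrm{rad}(f))$, so it remains to compute $\mathrm{ord}^{\mathrm{b}}(g)$ for $g = \mathrm{rad}(f) = f_1 \cdots f_t$, and to show it equals $m/d_f$ with the claimed constant. The key tool is Theorem 1.1 applied to a candidate binomial multiple $B(X) = X^N - \mu$: its radical is $\mathrm{rad}(B)$, and $g \mid B$ iff $T_g \subseteq T_{\mathrm{rad}(B)}$ as subsets of $\mathbb{Z}/M\mathbb{Z}$ for the common order $M = \mathrm{ord}(\mathrm{rad}(B))$. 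The defining set $T_g$ is the union $\bigcup_i c_{m/q}(\gamma_i)$ of $q$-cyclotomic cosets mod $m$. I would then argue that a binomial $X^N - \mu$ of minimal degree $N$ (with $p \nmid N$) divisible by $g$ corresponds, after rescaling the modulus to $m$, to requiring the $\gamma_i$ to lie in an equal-difference set; the common difference of the smallest such arithmetic-progression-like set containing $\{\gamma_1, \dots, \gamma_t\}$ modulo $m$, with the additional constraint coming from $\mathbb{F}_q$-rationality (the $q-1$ factor) and from $m$ itself, is exactly $d_f = \gcd(d_1, \dots, d_{t-1}, m, q-1)$. The degree $N$ of the resulting minimal binomial is then $m/d_f$, since scaling the cyclotomic structure by $d_f$ divides the order by $d_f$.

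For the explicit form of the minimal binomial multiple, once $N = p^u m / d_f$ is fixed, I would determine $\mu$ by noting that $X^N - \mu$ and $f(X)$ share a common root $\zeta_m^{-\gamma_i}$ (or an appropriate power thereof) in $\overline{\mathbb{F}_q}$, where $\zeta_m$ is a primitive $m$-th root of unity; substituting this root into $X^N - \mu$ forces $\mu = \zeta_m^{N \gamma_i} = \zeta_m^{p^u \gamma_i m / d_f}$, and one checks this value is independent of $i$ because the differences $\gamma_{i+1} - \gamma_i$ are divisible by $d_f$ and $\zeta_m^{(m/d_f) d_f} = \zeta_m^{m} = 1$. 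I would also verify $\mu \in \mathbb{F}_q$ using the $q - 1 \mid$-divisibility baked into $d_f$: indeed $\mu^{q} = \zeta_m^{q p^u \gamma m / d_f}$ and $q \equiv q \pmod{\cdot}$ combined with $d_f \mid q - 1$ gives $\mu^q = \mu$.

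The main obstacle I anticipate is the \emph{minimality} direction for the squarefree case: showing that no binomial of degree smaller than $m/d_f$ can be divisible by $g$. This requires a careful argument that any equal-difference set in $\mathbb{Z}/m'\mathbb{Z}$ (for a divisor-structure $m'$ arising from a smaller binomial) containing the images of all $\gamma_i$ must have common difference a divisor of $d_f$, hence cannot shrink the order below $m/d_f$; the subtlety is that a smaller binomial need not have order dividing $m$, so one must pass to a common refinement of moduli and track how the cyclotomic cosets of $g$ sit inside it, then invoke the characterization of equal-difference cyclotomic cosets from \cite{Zhu 2} together with the $\mathbb{F}_q$-rationality constraint. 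Establishing that $\gcd$ with both $m$ and $q-1$ is the precise obstruction — rather than something smaller or larger — is where the bulk of the technical work will lie. The $p$-power factor $p^u$ is comparatively routine, following from the structure of $p$-th powers of separable binomials and the exact-power criterion $f = \mathrm{rad}(f)^{p^v}$ in Theorem 1.1.
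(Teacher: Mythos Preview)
Your proposal is correct and follows essentially the same strategy as the paper: first reduce to the squarefree radical via the $p^{u}$ factor (this is the paper's Lemma~\ref{lem 3}), then show that the minimal binomial multiple of $\mathrm{rad}(f)$ corresponds to the smallest equal-difference subset of $\mathbb{Z}/m\mathbb{Z}$ defined over $\mathbb{F}_{q}$ that contains $T_{\mathrm{rad}(f)}$ (the paper's Theorem~\ref{thm 4}), and finally compute that this common difference is exactly $d_{f}=\gcd(d_{1},\dots,d_{t-1},m,q-1)$. The only organizational difference is that the paper reaches the formula for $d_{f}$ in two stages---first the irreducible case (Lemma~\ref{lem 5}, giving $d_{f}=\gcd(m,q-1)$), then an $\mathrm{lcm}$-of-two-polynomials step (Proposition~\ref{prop 1}), and an induction to the general squarefree case (Theorem~\ref{thm 5})---whereas you propose to attack the full defining set $\bigcup_{i} c_{m/q}(\gamma_{i})$ directly. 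Your direct route is slightly more economical; the paper's modular route has the side benefit of isolating Proposition~\ref{prop 1} as a standalone result on how minimal binomial orders behave under $\mathrm{lcm}$. The subtlety you flag in the minimality direction (that a competing binomial multiple may have order $M$ not dividing $m$) is exactly what the paper resolves in the proof of Theorem~\ref{thm 4}: one shows $m\mid M$, embeds into $\mathbb{Z}/M\mathbb{Z}$, and intersects the two equal-difference sets to force the common difference down to a divisor of $d_{f}\cdot\tfrac{M}{m}$. One small slip: the shared root should be $\zeta_{m}^{\gamma_{i}}$, not $\zeta_{m}^{-\gamma_{i}}$, under the paper's conventions.
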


As an application, in Section \ref{sec 1} we determine the $\lambda$-constacylic codes of length $N$ with minimal distance $2$, where $N$ is a positive integer and $\lambda$ is a nonzero element in $\mathbb{F}_{q}$.

\section{Basic definitions and notations}
Let $n$ be a positive integer. We denote by $\mathbb{Z}/n\mathbb{Z}$ the ring of residue classes modulo $n$. As there is a canonical surjection
$$\mathbb{Z} \rightarrow \mathbb{Z}/n\mathbb{Z}; \ \gamma \mapsto \overline{\gamma} = \gamma+n\mathbb{Z},$$
if making no confusion, we sometimes identify $\gamma$ with its image $\overline{\gamma}$. Therefore the notation $\gamma \in \mathbb{Z}/n\mathbb{Z}$ actually stands for the image of $\gamma$ in $\mathbb{Z}/n\mathbb{Z}$.

If $m$ and $n$ are coprime integers, we denote by $\mathrm{ord}_{n}(m)$ the order of $m$ in the multiplicative group $(\mathbb{Z}/n\mathbb{Z})^{\ast}$, i.e., the smallest positive integer such that 
$$m^{\mathrm{ord}_{n}(m)} \equiv 1 \pmod{n}.$$

For any prime number $\ell$, denote by $v_{\ell}(n)$ the $\ell$-adic valuation of $n$, i.e., the maximal integer such that $\ell^{v_{\ell}(n)} \mid n$. 

For integers $a < b$, we write $[a,b]$ for the integer interval $\{a,a+1,\cdots,b\}$.

Let $p$ be a prime number, and $q=p^{e}$ be a power of $p$, where $e \in \mathbb{N}^{+}$. Let $\mathbb{F}_{q}$ be a finite field of $q$ elements. The multiplicative group $\mathbb{F}_{q}^{\ast}$ of nonzero elements in $\mathbb{F}_{q}$ is a cyclic group. For any $\lambda \in \mathbb{F}_{q}^{\ast}$, the order $\mathrm{ord}(\lambda)$ of $\lambda$ is defined to be the order of the subgroup $\langle\lambda\rangle$ generated by $\lambda$. In other words, $\mathrm{ord}(\lambda)$ is the smallest positive integer such that $\lambda^{\mathrm{ord}(\lambda)}=1$.

Given a positive integer $n$ not divisible by $p$, there are $n$ roots of $X^{n}-1$ lying in some finite extension of $\mathbb{F}_{q}$. A root $\zeta_{n}$ of $X^{n}-1$ which fails to be a root of $X^{m}-1$ for any $m < n$ is called a primitive $n$-th root of unity. Throughout this paper we fix a family 
$$\{\zeta_{n} \ | \ \mathrm{gcd}(n,q)=1\}$$
of primitive roots of unity, which is compatible in the sense that for any $m,n \in \mathbb{N}^{+}$ such that $m \mid n$ and $\mathrm{gcd}(n,q)=1$ it holds that 
$$\zeta_{n}^{\frac{n}{m}} = \zeta_{m}.$$

Let $f(X)$ be a nonconstant polynomial over $\mathbb{F}_{q}$, which is coprime to $X$.  Then there is a smallest positive integer $n$ such that $f(X) \mid X^{n}-1$. The integer $n$ is called the order of $f(X)$ and is denoted by $n=\mathrm{ord}(f)$. The polynomial $f(X)$ has no repeated root if and only if $n$ is coprime to $q$. If, furthermore, $f(X)$ is irreducible over $\mathbb{F}_{q}$, then $\mathrm{ord}(f) = \mathrm{ord}(\delta)$ for any root $\delta$ of $f(X)$ lying in the finite extension field $\mathbb{F}_{q}[X]/(f(X))$ of $\mathbb{F}_{q}$.

Assume that $f(X)$ is a nonconstant squarefree polynomial coprime to $X$ over $\mathbb{F}_{q}$, with $\mathrm{ord}(f)=n$. Then $f(X)$ can be written as
$$f(X) = a\cdot \prod_{\gamma \in T_{f}}(X - \zeta_{n}^{\gamma})$$
for some $a \in \mathbb{F}_{q}^{\ast}$ and $T_{f} \subseteq \mathbb{Z}/n\mathbb{Z}$. That is, up to a multiplication of scalar, $f(X)$ is fully determined by the set $T_{f}$. We refer $T_{f}$ as to the primitive defining set of $f(X)$. Here we call $T_{f}$ primitive since for any $m < n$ there is no subset $T \subseteq \mathbb{Z}/m\mathbb{Z}$ with which $f(X)$ can be written as $f(X) = a \cdot \prod\limits_{\gamma \in T}(X-\zeta_{m}^{\gamma})$.

Given a positive integer $N$ such that $n \mid N$ and $\mathrm{gcd}(q,N)=1$, there is a canonical embedding
$$i_{n,N}: \mathbb{Z}/n\mathbb{Z} \rightarrow \mathbb{Z}/N\mathbb{Z}; a+n\mathbb{Z} \mapsto a\cdot \frac{N}{n} +N\mathbb{Z}.$$
Then $f(X)$ can be also presented as
$$f(X) = a\cdot\prod_{\gamma \in T_{f}}(X - \zeta_{n}^{\gamma}) = a\cdot\prod_{\widetilde{\gamma} \in i_{n,N}(T_{f})}(X-\zeta_{N}^{\widetilde{\gamma}}),$$
where $i_{n,N}(T_{f})$ denotes the image of $T_{f}$ under $i_{n,N}$ in $\mathbb{Z}/N\mathbb{Z}$. In such sense, we call $i_{n,N}(T_{f})$ the defining set modulo $N$ of $f(X)$. If $N$ is clear from the context, we also simply refer $i_{n,N}(T_{f})$ as to a defining set of $f(X)$.

\begin{lemma}\label{lem 2}
	Let $f$ and $g$ be polynomials over $\mathbb{F}_{q}$, each of which is coprime to $X$ and has no repeated root. Denote the order of $f$ and $g$ by $n$ and $m$, and the primitive defining set of $f$ and $g$ by $T_{f}$ and $T_{g}$, respectively. Then $f \mid g$ if and only if 
	$$i_{n,\mathrm{lcm}(n,m)}(T_{f}) \subseteq i_{m,\mathrm{lcm}(n,m)}(T_{g}).$$
\end{lemma}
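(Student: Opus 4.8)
The plan is to pass to a splitting field and compare root sets. Write $L = \mathrm{lcm}(n,m)$. Since $f$ and $g$ have no repeated root, both $n$ and $m$ are coprime to $q$, hence so is $L$, and the fixed primitive root $\zeta_{L}$ is available in the family. Using the compatibility relation $\zeta_{L}^{L/n} = \zeta_{n}$ together with the definition of the embedding $i_{n,L}$, one rewrites
$$f(X) = a\prod_{\gamma \in i_{n,L}(T_{f})}(X - \zeta_{L}^{\gamma}), \qquad g(X) = b\prod_{\gamma \in i_{m,L}(T_{g})}(X - \zeta_{L}^{\gamma})$$
for suitable $a,b \in \mathbb{F}_{q}^{\ast}$, where in each product the linear factors are pairwise non-associate, because $\zeta_{L}$ has multiplicative order exactly $L$ and the maps $i_{n,L}$, $i_{m,L}$ are injective.

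First I would record the standard reduction that for $f,g \in \mathbb{F}_{q}[X]$ with $f$ of nonzero leading coefficient, $f \mid g$ holds in $\mathbb{F}_{q}[X]$ if and only if it holds in $\overline{\mathbb{F}_{q}}[X]$; this is because the quotient and remainder produced by dividing $g$ by $f$ are computed by the same algorithm and hence coincide over either field. So it suffices to analyze divisibility in $\overline{\mathbb{F}_{q}}[X]$, which is a UFD in which the irreducible factors of $f$ and of $g$ are precisely the distinct linear polynomials listed above.

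For the ``if'' direction, assume $i_{n,L}(T_{f}) \subseteq i_{m,L}(T_{g})$. Then every linear factor $X - \zeta_{L}^{\gamma}$ of $f$ occurs among the factors of $g$; as these factors of $f$ are pairwise non-associate and each divides $g$, their product divides $g$, and since $f$ equals $a$ times that product we get $f \mid g$. For the ``only if'' direction, assume $f \mid g$ in $\overline{\mathbb{F}_{q}}[X]$. Then each root $\zeta_{L}^{\gamma}$ of $f$, with $\gamma \in i_{n,L}(T_{f})$, is a root of $g$, hence $\zeta_{L}^{\gamma} = \zeta_{L}^{\gamma'}$ for some $\gamma' \in i_{m,L}(T_{g})$; since $\zeta_{L}$ has order $L$ this forces $\gamma = \gamma'$ in $\mathbb{Z}/L\mathbb{Z}$, giving $i_{n,L}(T_{f}) \subseteq i_{m,L}(T_{g})$.

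I do not expect a serious obstacle here; the argument is essentially bookkeeping once the correct modulus $\mathrm{lcm}(n,m)$ is chosen. The only point requiring genuine care is verifying that $i_{n,L}$ is exactly the translation from ``$\zeta_{n}$-coordinates'' to ``$\zeta_{L}$-coordinates'', so that the two displayed factorizations of $f$ really agree; this is where the normalization $\zeta_{L}^{L/n} = \zeta_{n}$ of the chosen family of roots of unity is used, and it is the only place that normalization enters.
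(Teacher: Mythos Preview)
Your argument is correct. The paper states this lemma without proof, treating it as an elementary consequence of the definitions; your write-up supplies exactly the expected details---passing to the common modulus $L=\mathrm{lcm}(n,m)$ via the compatibility $\zeta_{L}^{L/n}=\zeta_{n}$, and comparing root sets of squarefree polynomials over $\overline{\mathbb{F}_{q}}$---so there is nothing to contrast.
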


Let $n$ be a positive integer coprime to the prime power $q$. For any $\gamma \in \mathbb{Z}/n\mathbb{Z}$, the $q$-cyclotomic coset modulo $n$ containing $\gamma$ is defined to be 
$$c_{n/q}(\gamma) = \{\gamma,\gamma q,\cdots,\gamma q^{\tau-1}\} \subseteq \mathbb{Z}/n\mathbb{Z},$$
where $\tau$ is the smallest positive integer such that $\gamma q^{\tau} = \gamma$. We refer to $\gamma$ as a representative of $c_{n/q}(\gamma)$ and $\tau$ as the size of $c_{n/q}(\gamma)$. And we denote by $\mathcal{C}_{n/q}$ the space of all $q$-cyclotomic cosets modulo $n$.

It is a well-known fact that there is an one-to-one correspondence between $\mathcal{C}_{n/q}$ and the set of all irreducible factors of $X^{n}-1$ over $\mathbb{F}_{q}$, which assigns to $c_{n/q}(\gamma) \in \mathcal{C}_{n/q}$ the irreducible polynomial
$$M_{\gamma}(X) = (X-\zeta_{n}^{\gamma})(X-\zeta_{n}^{\gamma q})\cdots(X-\zeta_{n}^{\gamma q^{\tau-1}}).$$
In other words, $c_{n/q}(\gamma)$ is the defining set modulo $n$ of $M_{\gamma}(X)$. Note that it is primitive if and only if $\gamma$ and $n$ are coprime.

Throughout this manuscript, by a binomial we mean a polynomial of the form $X^{n}-\lambda$, where $n \in \mathbb{N}^{+}$ and $\lambda \neq 0$.

\section{Binomials over finite fields}
\subsection{An equivalent characterization of binomials via defining sets}
In this subsection we give an equivalent characterization of binomials over $\mathbb{F}_{q}$ in terms of the defining sets of their radicals. As for a squarefree binomial $f(X)$ and a nonzero element $a$ of $\mathbb{F}_{q}$, $af(X)$ is also a binomial, and $f(X)$ and $af(X)$ share the same defining set, without losing generality we assume that the binomials are monic in this subsection.

Let $f(X) = X^{n}-\lambda$ be a binomial over $\mathbb{F}_{q}$. It is well-known that $f(X)$ has repeated root if and only if the characteristic $p$ of $\mathbb{F}_{q}$ divides $n$. If it is this case, setting $n=p^{v}n^{\prime}$, where $v = v_{p}(n)$ and $\mathrm{gcd}(p,n^{\prime})=1$, and $\lambda^{\prime}=\lambda^{t}$ where $t$ is the unique integer such that $1 \leq t \leq \mathrm{ord}(\lambda)-1$ and $tp^{v} \equiv 1 \pmod{\mathrm{ord}(\lambda)}$, then $\lambda^{\prime p^{v}} = \lambda$ and $f(X)$ can be expressed as
$$f(X) = X^{n}-\lambda = (X^{n^{\prime}}-\lambda^{\prime})^{p^{v}}.$$
Moreover, every root of $f(X)$ has multiplicity $p^{v}$.

By the above argument, to understand the binomials over $\mathbb{F}_{q}$ one need only concentrate on the case of squarefree binomials. Theorem \ref{thm 1} characterizes a squarefree binomials via its defining set, which generalizes Theorem 5.1 in \cite{Zhu 2}. A subset $E$ of $\mathbb{Z}/m\mathbb{Z}$ is said to be an equal-difference subset if it can be expressed as
$$E = \{\gamma,\gamma+d,\cdots,\gamma+(\frac{m}{d}-1)d\},$$
where $d$ is a positive divisor of $m$, called the common difference of $E$.

\begin{theorem}\label{thm 1}
	Let $f(X)$ be a nonconstant squarefree polynomial over $\mathbb{F}_{q}$ that is coprime to $X$, and let $m$ be the order of $f(X)$. Then $f(X)$ is a binomial if and only if its defining set $T_{f}$ is an equal-difference subset of $\mathbb{Z}/m\mathbb{Z}$.
\end{theorem}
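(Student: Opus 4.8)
\emph{Proof sketch.} The plan is to prove both implications by passing to the roots of $f(X)$ and reading its defining set off directly; this is the same root-theoretic correspondence used for the irreducible case in Theorem 5.1 of \cite{Zhu 2}, now carried out without assuming irreducibility.

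First I would treat the ``only if'' direction. Suppose $f(X) = aX^{n} - b$ is a squarefree binomial over $\mathbb{F}_{q}$, with $a,b \in \mathbb{F}_{q}^{\ast}$; its roots are those of $X^{n} - ba^{-1}$. Since $f$ is squarefree, $p \nmid n$, so $\mathrm{gcd}(n,q) = 1$ and $\zeta_{n}$ is available; fixing a root $\delta$ of $f$ in a suitable extension, the full set of roots of $f$ is $\{\delta\zeta_{n}^{j} : 0 \le j \le n-1\}$. As $f \mid X^{m}-1$, every root is an $m$-th root of unity, so $\zeta_{n} = (\delta\zeta_{n})\delta^{-1}$ is a quotient of two $m$-th roots of unity; hence $\zeta_{n}^{m}=1$, which forces $n \mid m$. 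Writing $\delta = \zeta_{m}^{\alpha}$ (possible because $\delta^{m}=1$) and using the fixed compatibility $\zeta_{n} = \zeta_{m}^{m/n}$, the roots of $f$ become $\zeta_{m}^{\alpha + j(m/n)}$ for $0 \le j \le n-1$, so that
\[
T_{f} = \bigl\{\, \alpha,\ \alpha + \tfrac{m}{n},\ \alpha + 2\tfrac{m}{n},\ \dots,\ \alpha + (n-1)\tfrac{m}{n} \,\bigr\} \subseteq \mathbb{Z}/m\mathbb{Z}.
\]
This is exactly an equal-difference subset with common difference $d = m/n$, a positive divisor of $m$ with $m/d = n = \deg f$; that these $n$ residues are pairwise distinct modulo $m$ (as they must be, $f$ being squarefree) is immediate from $n \mid m$.

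For the ``if'' direction, suppose $T_{f}$ is equal-difference, say $T_{f} = \{\gamma + jd : 0 \le j \le k-1\}$ with $d$ a positive divisor of $m$ and $k = m/d$. Because $f$ is squarefree of order $m$, we have $f(X) = a\prod_{\gamma' \in T_{f}}(X - \zeta_{m}^{\gamma'})$ for its leading coefficient $a \in \mathbb{F}_{q}^{\ast}$. Now $\omega := \zeta_{m}^{d}$ has order $m/\mathrm{gcd}(m,d) = k$, so it is a primitive $k$-th root of unity and $Y^{k} - 1 = \prod_{j=0}^{k-1}(Y - \omega^{j})$. Substituting $Y = \zeta_{m}^{-\gamma}X$ and multiplying through by $\zeta_{m}^{\gamma k}$ yields
\[
\prod_{j=0}^{k-1}\bigl(X - \zeta_{m}^{\gamma}\omega^{j}\bigr) = X^{k} - \zeta_{m}^{\gamma k} = X^{m/d} - \zeta_{m}^{\gamma m/d},
\]
hence $f(X) = a\,X^{m/d} - a\,\zeta_{m}^{\gamma m/d}$. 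Since $f \in \mathbb{F}_{q}[X]$, its constant term $-a\,\zeta_{m}^{\gamma m/d}$ lies in $\mathbb{F}_{q}$ and is nonzero ($f$ being coprime to $X$), so $f$ is a binomial over $\mathbb{F}_{q}$.

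The computations here are routine, so I do not expect a genuine obstacle: this is essentially a ``the same proof still works'' generalization, as neither implication used irreducibility. The only places needing care are in the ``only if'' part, where one must first establish $n \mid m$ so that all root-exponents legitimately live in $\mathbb{Z}/m\mathbb{Z}$, and then use the globally fixed system $\zeta_{n} = \zeta_{m}^{m/n}$ so that the common difference emerges as the honest divisor $m/n$ of $m$ rather than as some unrelated residue. Everything else is the standard dictionary between a squarefree polynomial and its defining set, together with the identity $\prod_{j}(Y-\omega^{j}) = Y^{k}-1$.
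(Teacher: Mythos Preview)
Your proof is correct and takes a more direct route than the paper in both directions. For the ``if'' direction, the paper expands $\prod_{i}(X-\zeta_m^{\gamma+id})$ and proves $a_1 = \cdots = a_{n-1} = 0$ by an induction on the elementary symmetric functions, manipulating sums of the shape $\sideset{}{'}\sum \zeta_m^{j_1 d}\cdots\zeta_m^{j_k d}$ with $1/\ell!$ prefactors; your substitution $Y = \zeta_m^{-\gamma}X$ into $Y^{k}-1 = \prod_j(Y-\omega^j)$ produces the closed form $X^{m/d}-\zeta_m^{\gamma m/d}$ in one step and sidesteps any characteristic-$p$ concern about dividing by factorials. (The paper does carry out essentially your computation \emph{after} the proof, when it records $f_T(X) = X^{m/d}-\zeta_m^{\gamma m/d}$ explicitly, but inside the proof it relies on the induction.) For the ``only if'' direction, the paper sets $r = \mathrm{ord}(\lambda)$, computes the defining set modulo $nr$ as $\{\delta,\delta+r,\ldots,\delta+(n-1)r\}$, and then invokes a separate lemma that the equal-difference property is preserved under the embedding $i_{m,nr}$ to descend to $\mathbb{Z}/m\mathbb{Z}$; you instead establish $n \mid m$ directly from $\zeta_n^{m}=1$ and read the defining set off in $\mathbb{Z}/m\mathbb{Z}$ from the outset, so no auxiliary lemma is needed. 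Your argument is shorter and self-contained; the paper's version has the side benefit of making the $nr$-structure explicit, which it reuses later when computing irreducible factorizations of binomials.
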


\begin{proof}
	First assume that the defining set
	$$T_{f} = \{\gamma,\gamma+d,\cdots,\gamma+(n-1)d\}$$
	of $f(X)$ is of equal difference, where $n = \frac{m}{d}$. Expand $f(X)$ as
	$$f(X)= (X-\zeta_{m}^{\gamma})(X-\zeta_{m}^{\gamma+d})\cdots(X-\zeta_{m}^{\gamma+(n-1)d}) = X^{n} + a_{1}X^{n-1} + \cdots + a_{n-1}X +a_{n}.$$
	We prove by induction that $a_{1} = a_{2} = a_{n-1} = 0$. First, it can be computed directly that
	$$a_{1} = -(\zeta_{m}^{\gamma} + \cdots + \zeta_{m}^{\gamma+(n-1)d}) = -\zeta_{m}^{\gamma}\cdot \dfrac{\zeta_{m}^{m}-1}{\zeta_{m}^{d}-1} = 0,$$
	as $\zeta_{m}^{d}-1 \neq 0$. Now suppose that $a_{1} = \cdots = a_{k} = 0$, where $1 \leq k < n-1$. Note that for any $1 \leq \ell < n$,
	$$a_{\ell} = (-1)^{\ell}\sum_{0 \leq j_{1} < \cdots < \cdots j_{\ell} \leq n-1}\zeta_{m}^{\gamma+j_{1}d}\cdots\zeta_{m}^{\gamma+j_{\ell}d} = (-1)^{\ell}\dfrac{1}{(\ell)!}\zeta_{m}^{\ell\gamma}\cdot\sideset{}{^{\prime}}\sum_{j_{1},\cdots,j_{\ell}}\zeta_{m}^{j_{1}d}\cdots\zeta_{m}^{j_{\ell}d},$$
	where the sum $\sideset{}{^{\prime}}\sum\limits_{j_{1},\cdots,j_{\ell}}$ takes all $\ell$-tuples $(j_{1},\cdots,j_{\ell}) \in \{0,1,\cdots,n-1\}^{\ell}$ such that $j_{1},\cdots,j_{\ell}$ are pairwise distinct. As $a_{k}= (-1)^{k}\dfrac{1}{k!}\zeta_{m}^{k\gamma}\cdot\sideset{}{^{\prime}}\sum\limits_{j_{1},\cdots,j_{k}}\zeta_{m}^{j_{1}d}\cdots\zeta_{m}^{j_{k}d} = 0$, therefore
	\begin{align*}
		\sideset{}{^{\prime}}\sum\limits_{j_{1},\cdots,j_{k+1}}\zeta_{m}^{j_{1}d}\cdots\zeta_{m}^{j_{k+1}d} &= \sum_{j_{1}=0}^{n-1}\zeta_{m}^{j_{1}d}(\sideset{}{^{\prime}}\sum\limits_{\substack{j_{2},\cdots,j_{k+1}\\ j_{i} \neq j_{1}}}\zeta_{m}^{j_{2}d}\cdots\zeta_{m}^{j_{k+1}d})\\
		&= \sum_{j_{1}=0}^{n-1}\zeta_{m}^{j_{1}d}(-k\cdot\sideset{}{^{\prime}}\sum\limits_{\substack{j_{2},\cdots,j_{k}\\ j_{i} \neq j_{1}}}\zeta_{m}^{j_{1}d}\zeta_{m}^{j_{2}d}\cdots\zeta_{m}^{j_{k}d})\\
		&= -k\cdot \sum_{j_{1}=0}^{n-1}\zeta_{m}^{2j_{1}d}(\sideset{}{^{\prime}}\sum\limits_{\substack{j_{2},\cdots,j_{k}\\ j_{i} \neq j_{1}}}\zeta_{m}^{j_{2}d}\cdots\zeta_{m}^{j_{k}d}),
	\end{align*}
	where the second equality holds because $\sideset{}{^{\prime}}\sum\limits_{j_{1},\cdots,j_{k}}\zeta_{m}^{j_{1}d}\cdots\zeta_{m}^{j_{k}d} = 0$ and each term $\zeta_{m}^{j_{1}d}\zeta_{m}^{j_{2}d}\cdots\zeta_{m}^{j_{k}d}$ for a fixed $j_{1}$ appears repeatedly $k$ times in $\sideset{}{^{\prime}}\sum\limits_{j_{1},\cdots,j_{k}}\zeta_{m}^{j_{1}d}\cdots\zeta_{m}^{j_{k}d}$. Applying the same argument successively with $a_{k-1}= \cdots = a_{1}=0$ yields that 
	$$\sideset{}{^{\prime}}\sum\limits_{j_{1},\cdots,j_{k+1}}\zeta_{m}^{j_{1}d}\cdots\zeta_{m}^{j_{k+1}d} = (-1)^{k}k!\cdot \sum_{j_{1}=0}^{n-1}\zeta_{m}^{(k+1)j_{1}d},$$
	which indicates that 
	$$a_{k+1}= -\frac{1}{k+1}\zeta_{m}^{(k+1)\gamma}\cdot\sum\limits_{j=0}^{n-1}\zeta_{m}^{(k+1)jd}.$$
	Since $k+1 < n$, then $\zeta_{m}^{(k+1)d} \neq 1$ and 
	$$a_{k+1}= -\frac{1}{k+1}\zeta_{m}^{(k+1)\gamma}\cdot\dfrac{\zeta_{m}^{(k+1)n d}-1}{\zeta_{m}^{(k+1)d}-1} = 0.$$
	By induction it holds that $a_{1} = a_{2} = a_{n-1} = 0$.
	
	Conversely, assume that $f(X) = X^{n}-\lambda \in \mathbb{F}_{q}[X]$. Since $f(X)$ is squarefree, $n$ is coprime to $q$. Denote by $r$ the order of $\lambda$. There is a primitive $nr$-th root of unity, say $\zeta_{nr}^{\delta}$ where $\delta$ is coprime to $nr$, lying in $\overline{\mathbb{F}}_{q}$ such that $(\zeta_{nr}^{\delta})^{n} = \lambda$. Then $f(X)$ can be completely factorized as
	\begin{align*}
		X^{n}-\lambda &= \zeta_{nr}^{\delta n}((\frac{X}{\zeta_{nr}^{\delta}})^{n}-1)\\
		&= \zeta_{nr}^{\delta n}(\frac{X}{\zeta_{nr}^{\delta}}-1)(\frac{X}{\zeta_{nr}^{\delta}}-\zeta_{n})\cdots(\frac{X}{\zeta_{nr}^{\delta}}-\zeta_{n}^{n-1})\\
		&=(X-\zeta_{nr}^{\delta})(X-\zeta_{nr}^{\delta+r})\cdots(X-\zeta_{nr}^{\delta+(n-1)r}).
	\end{align*}
	Clearly the order $m$ of $f(X)$ divides $nr$, and the defining set of $f(X)$ modulo $nr$ are given by
	$$i_{m,nr}(T_{f}) = \{\delta,\delta+r,\cdots,\delta+(n-1)r\},$$
	which is an equal-difference subset of $\mathbb{Z}/nr\mathbb{Z}$. Now the conclusion follows directly from the next lemma.
\end{proof}

\begin{lemma}
	Let $n$ and $m$ be any positive integers such that $n \mid m$. Then a subset $E$ of $\mathbb{Z}/n\mathbb{Z}$ is of equal difference if and only if its image $i_{n,m}(E)$ in $\mathbb{Z}/m\mathbb{Z}$ is of equal difference.
\end{lemma}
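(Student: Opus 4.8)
The plan is to reformulate "equal-difference subset" in purely group-theoretic terms and then exploit the fact that $i_{n,m}$ is an injective homomorphism of abelian groups. First I would record the elementary observation that a nonempty subset $E \subseteq \mathbb{Z}/m\mathbb{Z}$ is an equal-difference subset if and only if $E$ is a coset of a subgroup of $\mathbb{Z}/m\mathbb{Z}$. Indeed, if $E = \{\gamma,\gamma+d,\cdots,\gamma+(\tfrac{m}{d}-1)d\}$ with $d \mid m$, then the $\tfrac{m}{d}$ listed elements are pairwise distinct modulo $m$ (since $id \equiv jd \pmod m$ forces $\tfrac{m}{d}\mid i-j$), so $E = \gamma + \langle d\rangle$, a coset of the cyclic subgroup $\langle d\rangle$ of order $\tfrac md$. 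Conversely every subgroup of $\mathbb{Z}/m\mathbb{Z}$ equals $\langle d\rangle$ for a positive divisor $d$ of $m$, and $\gamma + \langle d\rangle = \{\gamma,\gamma+d,\cdots,\gamma+(\tfrac md-1)d\}$ is of equal difference with common difference $d$. The same statement holds verbatim with $m$ replaced by $n$ (the empty-set case of the lemma being trivial).

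Next I would note that $i_{n,m}$ is a well-defined, injective group homomorphism whose image is the subgroup $H := \langle \tfrac mn + m\mathbb{Z}\rangle \leq \mathbb{Z}/m\mathbb{Z}$, which has order $n$; hence $i_{n,m}$ restricts to an isomorphism of $\mathbb{Z}/n\mathbb{Z}$ onto $H$. The forward direction is then immediate: if $E = c + K$ with $K \leq \mathbb{Z}/n\mathbb{Z}$, then $i_{n,m}(E) = i_{n,m}(c) + i_{n,m}(K)$, and $i_{n,m}(K)$ is a subgroup of $\mathbb{Z}/m\mathbb{Z}$ (being the homomorphic image of a subgroup), so $i_{n,m}(E)$ is a coset of a subgroup, hence equal-difference by the first paragraph.

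For the converse, suppose $i_{n,m}(E) = c' + K'$ for some subgroup $K' \leq \mathbb{Z}/m\mathbb{Z}$. The one point that needs care — and the main (mild) obstacle — is that $i_{n,m}$ is not surjective, so one must first check that $c'$ and $K'$ may be taken inside $H$. But $i_{n,m}(E) \subseteq H$, so $c' \in H$ (choose $c'$ to be any element of $i_{n,m}(E)$), and for any $k' \in K'$ the element $k' = (c'+k') - c'$ is a difference of two elements of $H$, hence $k' \in H$; thus $K' \leq H$. Applying the inverse isomorphism $H \to \mathbb{Z}/n\mathbb{Z}$ then yields a subgroup $K \leq \mathbb{Z}/n\mathbb{Z}$ and an element $c \in \mathbb{Z}/n\mathbb{Z}$ with $i_{n,m}(K) = K'$, $i_{n,m}(c) = c'$, and therefore $E = i_{n,m}^{-1}(i_{n,m}(E)) = c + K$ by injectivity; by the first paragraph $E$ is of equal difference. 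As a remark I would also give the purely computational version, in which one checks directly that the common difference $d$ of $E$ transforms into $d\cdot\tfrac mn$ and conversely, together with the short divisibility fact $d\cdot\tfrac mn \mid m \iff d \mid n$; this is useful because it records explicitly how the two common differences (and hence the representations) correspond.
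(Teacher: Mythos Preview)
Your proof is correct. The paper's own proof is a single line --- ``The assertion is clear from the definition of equal-difference subset'' --- so your group-theoretic argument via cosets is considerably more detailed than what the paper records. What you give is a genuine proof rather than an appeal to obviousness: the identification of equal-difference subsets with cosets of subgroups, plus the observation that $i_{n,m}$ is an isomorphism onto $\langle \tfrac{m}{n}\rangle$, cleanly handles both directions, and your care in the converse (checking $K' \leq H$ before pulling back) addresses the only point where a careless argument could stumble. The ``purely computational version'' you mention at the end --- that the common difference $d$ of $E$ becomes $d\cdot\tfrac{m}{n}$ under $i_{n,m}$, together with $d\cdot\tfrac{m}{n}\mid m \iff d\mid n$ --- is presumably the direct verification the paper has in mind when it says the claim is clear from the definition.
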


\begin{proof}
	The assertion is clear from the definition of equal-difference subset.
\end{proof}

In particular, if $f(X)$ is irreducible, then its defining set is a $q$-cyclotomic coset $c_{m/q}(\gamma)$ modulo $m$. Theorem \ref{thm 1} indicates that $f(X)$ is a binomial if and only if $c_{m/q}(\gamma)$ is an equal-difference coset. 

On the other hand, by the proof of Theorem \ref{thm 1} any equal-difference subset
$$T = \{\gamma,\gamma+d,\cdots,\gamma+(\frac{m}{d}-1)d\} \subseteq \mathbb{Z}/m\mathbb{Z},$$
where $d \mid m$, induces a binomial
\begin{align*}
	f_{T}(X) &= \prod_{i=0}^{\frac{m}{d}-1}(X - \zeta_{m}^{\gamma+id})\\
	&=X^{\frac{m}{d}} + (-1)^{\frac{m}{d}}\zeta_{m}^{\frac{\gamma m}{d}}\zeta_{m}^{d+\cdots+(\frac{m}{d}-1)d}\\
	&=X^{\frac{m}{d}} - \zeta_{m}^{\frac{\gamma m}{d}}.
\end{align*}
Here the last equality holds since if $m$ is even then
$$(-1)^{\frac{m}{d}}\zeta_{m}^{d+\cdots+(\frac{m}{d}-1)d} = (-1)^{\frac{m}{d}}\zeta_{m}^{\frac{m}{2}\cdot(\frac{m}{d}-1)} = (-1)^{\frac{m}{d}}\cdot(-1)^{\frac{m}{d}-1}=-1;$$
otherwise, if $m$ is odd then so is $\frac{m}{d}$ and consequently
$$(-1)^{\frac{m}{d}}\zeta_{m}^{d+\cdots+(\frac{m}{d}-1)d} = (-1)^{\frac{m}{d}}\zeta_{m}^{m\cdot\frac{1}{2}(\frac{m}{d}-1)} = (-1)^{\frac{m}{d}}=-1.$$
Notice that in general $f_{T}(X)$ is over the extension field $\mathbb{F}_{q}(\zeta_{m})$ of $\mathbb{F}_{q}$. The criterion for $f_{T}(X)$ being defined over $\mathbb{F}_{q}$ is given below.

\begin{lemma}\label{lem 1}
	The polynomial $f_{T}(X)$ is defined over $\mathbb{F}_{q}$ if and only if 
	$$\gamma q \equiv \gamma \pmod{d}.$$
	If it is this case, then the set $T$ is called an equal-difference subset defined over $\mathbb{F}_{q}$.
\end{lemma}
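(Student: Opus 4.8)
The plan is to reduce the statement to a one-line computation with the Frobenius automorphism. Recall that immediately before the lemma it was shown that $f_{T}(X) = X^{m/d} - \zeta_{m}^{\gamma m/d}$. A polynomial lies in $\mathbb{F}_{q}[X]$ precisely when all of its coefficients lie in $\mathbb{F}_{q}$, so $f_{T}(X)$ is defined over $\mathbb{F}_{q}$ if and only if its unique nonzero, non-leading coefficient $-\zeta_{m}^{\gamma m/d}$ belongs to $\mathbb{F}_{q}$; equivalently, $\zeta_{m}^{\gamma m/d} \in \mathbb{F}_{q}$.

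Next I would rewrite this coefficient more transparently. Since $\mathrm{gcd}(m,q)=1$ forces $\mathrm{gcd}(d,q)=1$, the compatibility relation $\zeta_{m}^{m/d} = \zeta_{d}$ of the fixed family of roots of unity gives $\zeta_{m}^{\gamma m/d} = (\zeta_{m}^{m/d})^{\gamma} = \zeta_{d}^{\gamma}$, where $\zeta_{d}$ is a \emph{primitive} $d$-th root of unity. Now an element $\alpha \in \overline{\mathbb{F}}_{q}$ lies in $\mathbb{F}_{q}$ if and only if it is fixed by the Frobenius map, i.e. $\alpha^{q} = \alpha$. Applying this to $\alpha = \zeta_{d}^{\gamma}$ yields the chain of equivalences
$$\zeta_{d}^{\gamma} \in \mathbb{F}_{q} \iff \zeta_{d}^{\gamma q} = \zeta_{d}^{\gamma} \iff \zeta_{d}^{(q-1)\gamma} = 1 \iff d \mid (q-1)\gamma \iff \gamma q \equiv \gamma \pmod{d},$$
where the third equivalence uses precisely that $\zeta_{d}$ has exact order $d$. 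This proves the lemma.

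As a sanity check I would also verify the statement through the defining-set formalism: a squarefree polynomial with root set $\{\zeta_{m}^{\delta} : \delta \in S\}$ is defined over $\mathbb{F}_{q}$ exactly when $S$ is a union of $q$-cyclotomic cosets modulo $m$, that is $qS = S$. Here $S = T$ is the coset $\gamma + \langle d\rangle$ of the unique subgroup $\langle d\rangle$ of order $m/d$ in $\mathbb{Z}/m\mathbb{Z}$; since multiplication by $q$ is an automorphism of $\mathbb{Z}/m\mathbb{Z}$ it preserves $\langle d\rangle$, so $qT = q\gamma + \langle d\rangle$, and $qT = T$ holds if and only if $(q-1)\gamma \in \langle d\rangle$, i.e. $d \mid (q-1)\gamma$, recovering the same condition. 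There is no real obstacle here; the only points requiring a little care are the identification $\zeta_{m}^{\gamma m/d} = \zeta_{d}^{\gamma}$ with $\zeta_{d}$ genuinely of order $d$ (so that $\zeta_{d}^{k}=1 \Leftrightarrow d\mid k$), and the harmless observation that both sides of the asserted equivalence are independent of the chosen integer representative of $\gamma \in \mathbb{Z}/m\mathbb{Z}$, since changing $\gamma$ by a multiple of $m$ alters $(q-1)\gamma$ by a multiple of $d$.
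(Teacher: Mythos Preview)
Your proof is correct and follows essentially the same approach as the paper: both reduce the question to whether the constant term $\zeta_{m}^{\gamma m/d}$ is fixed by the Frobenius, and then translate this into the congruence $\gamma q \equiv \gamma \pmod{d}$. Your use of the compatibility relation $\zeta_{m}^{m/d}=\zeta_{d}$ and the additional defining-set sanity check are minor embellishments, but the core argument is the same as the paper's.
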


\begin{proof}
	Since $f_{T}(X) = X^{\frac{m}{d}} - \zeta_{m}^{\frac{\gamma m}{d}}$, it is defined over $\mathbb{F}_{q}$ if and only if $\zeta_{m}^{\frac{\gamma m}{d}} \in \mathbb{F}_{q}$, which amounts to that $\zeta_{m}^{\frac{\gamma m}{d}}$ is stable under the Frobenius automorphism $\mathrm{Frob}_{q}$ over $\mathbb{F}_{q}$, i.e.,
	\begin{equation}\label{eq 1}
		(\zeta_{m}^{\frac{\gamma m}{d}})^{\mathrm{Frob}_{q}} = \zeta_{m}^{q\cdot\frac{\gamma m}{d}} = \zeta_{m}^{\frac{\gamma m}{d}}.
	\end{equation}
	As $\zeta_{m}$ is a primitive $m$-th root of unity, the identity \eqref{eq 1} is valid if and only if the congruence
	$$\frac{q\gamma m}{d} \equiv \frac{\gamma m}{d} \pmod{m},$$
	or equivalently, $\gamma q \equiv \gamma \pmod{d}$ holds.
\end{proof}

Now we turn to the general case. Let $f(X) \in \mathbb{F}_{q}[X]$, with the factorization
$$f(X) = f_{1}(X)^{e_{1}}\cdots f_{t}(X)^{e_{t}}$$
of $f(X)$ into powers of distinct irreducible polynomials. The radical of $f(X)$ is defined to be 
$$\mathrm{rad}(f) = f_{1}(X)\cdots f_{t}(X).$$

\begin{theorem}
	Let $f(X)$ be a nonconstant polynomial over $\mathbb{F}_{q}$ that is coprime to $X$, and let $m$ be the order of the radical $\mathrm{rad}(f)$ of $f(X)$. Then $f(X)$ is a binomial if and only if there is a nonnegative integer $v$ such that $f(X) = \mathrm{rad}(f)^{p^{v}}$ and the defining set $T_{\mathrm{rad}(f)}$ of $\mathrm{rad}(f)$ is an equal-difference subset of $\mathbb{Z}/m\mathbb{Z}$.
\end{theorem}

\subsection{Irreducible factorizations of binomials over finite fields}
This subsection is devoted to a brief reminder of the formula for the irreducible factorizations of binomials over finite fields, which serves as a preparation for Section \ref{sec 1}. For the detailed treatment and the proofs we refer the readers to \cite{Zhu3}.

\subsubsection{Explicit representatives and sizes of cyclotomic cosets}
Let $\ell$ be a prime number different from $p$, and $n$ be a positive integer which is divisible by neither $p$ nor $\ell$. Let $c_{n/q}(\gamma)$ be a $q$-cyclotomic coset modulo $n$, with size $\tau_{\gamma} = | c_{n/q}(\gamma) |$. For any positive integer $u$, we set up the following notations.
\begin{itemize}
	\item[(1)] If $\ell \mid q^{\tau_{\gamma}}-1$, then set
	\begin{equation*}
		v_{m}^{\ell^{u}}(\tau_{\gamma}) = \left\{
		\begin{array}{lcl}
			\max\{\mathrm{min}\{u-m-2,v_{\ell}(q^{\tau_{\gamma}}+1)-1\},0\}, \ \mathrm{if} \ \ell=2 \ \mathrm{and} \ q^{\tau_{\gamma}} \equiv 3 \pmod{4};\\
			\max\{\mathrm{min}\{u-m-1,v_{\ell}(q^{\tau_{\gamma}}-1)-1\},0\}, \ \mathrm{otherwise},
		\end{array} \right.
	\end{equation*}
	\begin{equation*}
		\mu_{m,i,\mathbf{t}}^{\ell^{u},n}(\gamma) = \left\{
		\begin{array}{lcl}
			\gamma+n\cdot(U_{m,i}^{\ell,n}(\gamma)+\sum\limits_{j=1}^{v_{m}^{\ell^{u}}(\tau_{\gamma})}t_{j}\cdot \ell^{m+j+1}), \ \mathrm{if} \ \ell=2 \ \mathrm{and} \ q^{\tau_{\gamma}} \equiv 3 \pmod{4};\\
			\gamma+n\cdot(U_{m,i}^{\ell,n}(\gamma)+\sum\limits_{j=1}^{v_{m}^{\ell^{u}}(\tau_{\gamma})}t_{j}\cdot \ell^{m+j}), \ \mathrm{otherwise},
		\end{array} \right.
	\end{equation*}
	where $0 \leq m \leq u$, $i \in [1,\ell-1]$, $\mathbf{t} \in [0,\ell-1]^{v_{m}^{\ell^{u}}(\tau_{\gamma})}$, and $U_{m,i}^{\ell,n}(\gamma)$ is the generator of degree $m$ of the $\ell$-adic $q$-cyclotomic system $\mathcal{PC}_{n/q}^{\ell}(\gamma)$. And denote by $\Sigma_{\ell^{u}n/n}(\gamma)$ the set
	$$\Sigma_{\ell^{u}n/n}(\gamma) = \{\mu_{m,i,\mathbf{t}}^{\ell^{u},n}(\gamma) \ | \ (m,i,\mathbf{t}) \in \bigsqcup_{m=0}^{u-1}(\{m\}\times[1,\ell-1]\times[0,\ell-1]^{v_{m}^{\ell^{u}}(\tau_{\gamma})})\sqcup\{(u,1,0)\}\}.$$
	\item[(2)] If $\ell \nmid q^{\tau_{\gamma}}-1$, then set
	$$v_{m}^{\ell^{u}}(\tau_{\gamma}) = \max\{\mathrm{min}\{u-m-1,v_{\ell}(q^{\tau_{\gamma}\cdot\mathrm{ord}_{\ell}(q^{\tau_{\gamma}})}-1)-1\},0\},$$
	$$\mu_{m,i,\mathbf{t}}^{\ell^{u},n}(\gamma) = \gamma+n\cdot(U_{m,i}^{\ell,n}(\gamma)+\sum\limits_{j=1}^{v_{m}^{\ell^{u}}(\tau_{\gamma})}t_{j}\cdot \ell^{m+j}),$$
	where $0 \leq m \leq u$, $i \in [1,h]$ for $h= \frac{\ell-1}{\mathrm{ord}_{\ell}(q^{\tau_{\gamma}})}$, $\mathbf{t} \in [0,\ell-1]^{v_{m}^{\ell^{u}}(\tau_{\gamma})}$, and $U_{m,i}^{\ell,n}(\gamma)$ is the generator of degree $m$ of the $\ell$-adic $q$-cyclotomic system $\mathcal{PC}_{n/q}^{\ell}(\gamma)$. And denote by $\Sigma_{\ell^{u}n/n}(\gamma)$ the set
	$$\Sigma_{\ell^{u}n/n}(\gamma) = \{\mu_{m,i,\mathbf{t}}^{\ell^{u},n}(\gamma) \ | \ (m,i,\mathbf{t}) \in \bigsqcup_{m=0}^{u-1}(\{m\}\times[1,h]\times[0,\ell-1]^{v_{m}^{\ell^{u}}(\tau_{\gamma})})\sqcup\{(u,1,0)\}\}.$$
\end{itemize}

\begin{lemma}\label{lem 4}
	The set $\Sigma_{\ell^{u}n/n}(\gamma)$ is a full set of representatives of the $q$-cyclotomic cosets modulo $\ell^{u}n$ which are contained in the preimage $\pi_{\ell^{u}n/n}^{-1}(c_{n/q}(\gamma))$ of $c_{n/q}(\gamma)$ along the canonical projection $\pi_{\ell^{u}n/n}: \mathbb{Z}/\ell^{u}n\mathbb{Z} \rightarrow \mathbb{Z}/n\mathbb{Z}$.
\end{lemma}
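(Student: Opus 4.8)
My plan is to recast the lemma as a statement about the orbits of multiplication by $q$ and then settle it by an $\ell$-adic analysis. Since $\ell\neq p$ and $p,\ell\nmid n$ we have $\gcd(q,\ell^{u}n)=1$, so multiplication by $q$ acts on $\mathbb{Z}/\ell^{u}n\mathbb{Z}$, preserves the fiber $F:=\pi_{\ell^{u}n/n}^{-1}(c_{n/q}(\gamma))$, and its orbits on $F$ are exactly the $q$-cyclotomic cosets contained in $F$; thus I must show that the listed $\mu_{m,i,\mathbf{t}}^{\ell^{u},n}(\gamma)$ form a transversal of these orbits. I would begin from the Chinese Remainder isomorphism $\mathbb{Z}/\ell^{u}n\mathbb{Z}\cong\mathbb{Z}/\ell^{u}\mathbb{Z}\times\mathbb{Z}/n\mathbb{Z}$, under which $\pi_{\ell^{u}n/n}$ is the second projection and $q$ acts diagonally. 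Then $F\cong\mathbb{Z}/\ell^{u}\mathbb{Z}\times c_{n/q}(\gamma)$ has $\ell^{u}\tau_{\gamma}$ elements, and because $q^{\tau_{\gamma}}$ fixes $\gamma$ modulo $n$, the $q$-orbit of $(a,\gamma)$ returns to the slice over $\gamma$ after exactly $\tau_{\gamma}$ steps, landing at $(ra,\gamma)$ with $r:=q^{\tau_{\gamma}}$. Consequently every $q$-orbit on $F$ meets that slice in a single $\langle r\rangle$-orbit, and this sets up a bijection between the $q$-orbits on $F$ and the $\langle r\rangle$-orbits on $\mathbb{Z}/\ell^{u}\mathbb{Z}$ under which orbit sizes are multiplied by $\tau_{\gamma}$. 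So it suffices to prove that the $\mathbb{Z}/\ell^{u}\mathbb{Z}$-coordinates of the $\mu_{m,i,\mathbf{t}}^{\ell^{u},n}(\gamma)$ form a full transversal of the $\langle r\rangle$-orbits on $\mathbb{Z}/\ell^{u}\mathbb{Z}$.

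The next step is the $\ell$-adic orbit count. Stratifying $\mathbb{Z}/\ell^{u}\mathbb{Z}$ by $\ell$-adic valuation, the stratum $\ell^{s}(\mathbb{Z}/\ell^{u-s}\mathbb{Z})^{\ast}$ for $0\le s\le u-1$, together with the fixed point $0$, is $\langle r\rangle$-stable; on it every orbit has the common size $\mathrm{ord}_{\ell^{u-s}}(r)$, and the number of orbits there is $(\ell-1)\ell^{u-s-1}/\mathrm{ord}_{\ell^{u-s}}(r)$. Evaluating $\mathrm{ord}_{\ell^{k}}(r)$ is where the lifting-the-exponent lemma enters: when $\ell\mid r-1$ (the case $\ell\mid q^{\tau_{\gamma}}-1$) one has $\mathrm{ord}_{\ell^{k}}(r)=\ell^{\max\{0,\,k-v_{\ell}(r-1)\}}$ for $\ell$ odd, with the usual $\ell=2$ anomaly when $r\equiv 3\pmod{4}$, for which $v_{2}(r^{k}-1)$ is governed by $v_{2}(r+1)$; when $\ell\nmid r-1$ one has $\mathrm{ord}_{\ell^{k}}(r)=\mathrm{ord}_{\ell}(r)\cdot\ell^{\max\{0,\,k-v_{\ell}(r^{\mathrm{ord}_{\ell}(r)}-1)\}}$. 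These computations are exactly what yields the two displayed formulas for $v_{m}^{\ell^{u}}(\tau_{\gamma})$ and the shift $\ell^{m+j}$ versus $\ell^{m+j+1}$ in the definition of $\mu_{m,i,\mathbf{t}}$. Comparing with the construction of the $\ell$-adic $q$-cyclotomic system $\mathcal{PC}_{n/q}^{\ell}(\gamma)$ in \cite{Zhu3}, I would check that the degree-$m$ generator $U_{m,i}^{\ell,n}(\gamma)$ picks one point out of each orbit in the stratum $s=m$, that $i$ runs over a transversal of the residual $\langle r\bmod\ell\rangle$-action there (hence $i\in[1,\ell-1]$ in case (1) and $i\in[1,h]$ in case (2)), and that the tail $\sum_{j}t_{j}\ell^{m+j}$ enumerates the orbits that only become distinguishable after raising the $\ell$-power past $\ell^{\,u-v_{m}^{\ell^{u}}(\tau_{\gamma})}$; hence distinct triples $(m,i,\mathbf{t})$ give points in distinct orbits.

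It then remains to prove completeness, i.e.\ that every orbit is represented, and this I would do by counting: it suffices that the orbit sizes sum to $|F|$, that is $\sum_{(m,i,\mathbf{t})}\tau_{\gamma}\cdot(\text{size of the }r\text{-orbit of the }\mathbb{Z}/\ell^{u}\mathbb{Z}\text{-coordinate})=\ell^{u}\tau_{\gamma}$. Cancelling $\tau_{\gamma}$ and using the count from the previous step (namely $(\ell-1)\ell^{v_{m}^{\ell^{u}}(\tau_{\gamma})}$, resp.\ $h\,\ell^{v_{m}^{\ell^{u}}(\tau_{\gamma})}$, orbits of size $\mathrm{ord}_{\ell^{u-m}}(r)$ at each degree $m$, plus a single fixed point) this becomes $\sum_{s=0}^{u-1}(\ell-1)\ell^{u-s-1}+1=\ell^{u}$, which telescopes. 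A more self-contained alternative, which I would use if the closed forms turn out unwieldy, is induction on $u$ along the tower $\pi_{\ell^{u+1}n/\ell^{u}n}$: each coset modulo $\ell^{u}n$ lifts either to one coset of $\ell$ times its length or to $\ell$ cosets of equal length, the dichotomy being decided (again by the same lifting-the-exponent computation) by whether its $\ell$-part or its prime-to-$\ell$ part is binding, and one keeps track of how each generator propagates one level up. I expect the main obstacle to be precisely this $\ell$-adic bookkeeping: keeping the three regimes ($\ell$ odd with $\ell\mid r-1$; $\ell$ odd with $\ell\nmid r-1$; $\ell=2$ with $r\equiv 3\pmod{4}$) in step, locating the exact level at which an orbit ``ramifies'' rather than ``splits'', and matching the resulting transversals with the closed forms $U_{m,i}^{\ell,n}(\gamma)+\sum_{j}t_{j}\ell^{m+j}$ — which is exactly why \cite{Zhu3} sets up the dedicated formalism of the $\ell$-adic $q$-cyclotomic system.
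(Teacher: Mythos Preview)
The paper does not actually prove this lemma: the surrounding subsection is introduced as ``a brief reminder'' with all proofs deferred to \cite{Zhu3}, and Lemma~\ref{lem 4} is stated bare. So there is no in-paper argument to compare against.

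Your plan is sound and is almost certainly what underlies the cited reference. The CRT reduction to $\langle r\rangle$-orbits on $\mathbb{Z}/\ell^{u}\mathbb{Z}$ with $r=q^{\tau_{\gamma}}$ is the natural move, the stratification by $\ell$-adic valuation together with the lifting-the-exponent computation of $\mathrm{ord}_{\ell^{k}}(r)$ reproduces exactly the case split and the exponents $v_{m}^{\ell^{u}}(\tau_{\gamma})$ in the definitions, and the telescoping cardinality check is correct. One point to watch: under CRT the $\mathbb{Z}/\ell^{u}\mathbb{Z}$-coordinate of $\mu_{m,i,\mathbf{t}}^{\ell^{u},n}(\gamma)=\gamma+nK$ is $\gamma+nK\bmod\ell^{u}$, not $K$ itself, so identifying the index $m$ with the valuation stratum goes through the affine change of variable $K\mapsto\gamma+nK$; this is a conjugacy (it intertwines $K\mapsto rK+n^{-1}\gamma(r-1)$ with $a\mapsto ra$) and does not affect orbit counts or sizes, but the statement ``$U_{m,i}^{\ell,n}(\gamma)$ lands in stratum $m$'' must be read in those shifted coordinates. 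As you already acknowledge, verifying that distinct triples $(m,i,\mathbf{t})$ give inequivalent representatives (and not merely the right count) ultimately rests on the definition of the generators $U_{m,i}^{\ell,n}(\gamma)$, which this paper only names; for that step one genuinely needs \cite{Zhu3}.
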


Let $p_{1} < \cdots < p_{s}$ be prime numbers not dividing $q$ and $n$, and $e_{1},\cdots,e_{s}$ be positive integers. We define the set $\mathcal{CR}_{p_{1}^{e_{1}}\cdots p_{s}^{e_{s}}n/q}^{j}(\gamma)$, $0 \leq j \leq s$, inductively by
\begin{itemize}
	\item[(1)] $\mathcal{CR}_{p_{1}^{e_{1}}\cdots p_{s}^{e_{s}}n/q}^{0}(\gamma) = \{\gamma\}$;
	\item[(2)] if $j$ is such that $0 \leq j \leq s-1$, then $\mathcal{CR}_{p_{1}^{e_{1}}\cdots p_{s}^{e_{s}}n/q}^{j+1}(\gamma)$ is settled to be
	$$\mathcal{CR}_{p_{1}^{e_{1}}\cdots p_{s}^{e_{s}}n/q}^{j+1}(\gamma) = \bigsqcup_{\eta \in \mathcal{CR}_{p_{1}^{e_{1}}\cdots p_{s}^{e_{s}}n/q}^{j}(\gamma)}\Sigma_{p_{j+1}^{e_{j+1}}p_{1}^{e_{1}}\cdots p_{j}^{e_{j}}n/p_{1}^{e_{1}}\cdots p_{j}^{e_{j}}n}(\eta).$$
\end{itemize}
And we write $\mathcal{CR}_{p_{1}^{e_{1}}\cdots p_{s}^{e_{s}}n/q}(\gamma) = \mathcal{CR}_{p_{1}^{e_{1}}\cdots p_{s}^{e_{s}}n/q}^{s}(\gamma)$. Then applying Lemma \ref{lem 4} successively leads to the following theorem.

\begin{theorem}
	Let $p_{1} < \cdots < p_{s}$ be prime numbers not dividing $q$ and $n$, and $e_{1},\cdots,e_{s}$ are positive integers. Let $c_{n/q}(\gamma)$ be a $q$-cyclotomic coset modulo $n$, with size $\tau_{\gamma}$. Then the set $\mathcal{CR}_{p_{1}^{e_{1}}\cdots p_{s}^{e_{s}}n/q}(\gamma)$ is a full set of representatives of the $q$-cyclotomic cosets modulo $p_{1}^{e_{1}}\cdots p_{s}^{e_{s}}n$ which are contained in $\pi_{p_{1}^{e_{1}}\cdots p_{s}^{e_{s}}n/n}^{-1}(c_{n/q}(\gamma))$.
\end{theorem}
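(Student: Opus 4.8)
The plan is to prove the statement by induction on $s$, peeling off one prime power at a time and invoking Lemma \ref{lem 4} at each step. Write $N_{0}=n$ and $N_{j}=p_{1}^{e_{1}}\cdots p_{j}^{e_{j}}n$ for $1\leq j\leq s$, so that $N_{j+1}=p_{j+1}^{e_{j+1}}N_{j}$ and the canonical projections compose compatibly as $\pi_{N_{j+1}/n}=\pi_{N_{j}/n}\circ\pi_{N_{j+1}/N_{j}}$. Abbreviate $\mathcal{CR}^{j}(\gamma)=\mathcal{CR}^{j}_{p_{1}^{e_{1}}\cdots p_{s}^{e_{s}}n/q}(\gamma)$. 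The claim I would actually carry through the induction is the sharper statement that, for every $0\leq j\leq s$, the set $\mathcal{CR}^{j}(\gamma)$ is a full and irredundant set of representatives of the $q$-cyclotomic cosets modulo $N_{j}$ contained in $\pi_{N_{j}/n}^{-1}(c_{n/q}(\gamma))$; the theorem is then the case $j=s$. The base case $j=0$ is immediate, since $\pi_{n/n}$ is the identity, $\pi_{n/n}^{-1}(c_{n/q}(\gamma))$ is the single coset $c_{n/q}(\gamma)$, and $\mathcal{CR}^{0}(\gamma)=\{\gamma\}$ represents it.

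For the inductive step I would first record two elementary facts about the canonical projection $\pi=\pi_{N_{j+1}/N_{j}}$. As $\pi$ is a ring homomorphism commuting with multiplication by $q$, the image of a $q$-cyclotomic coset modulo $N_{j+1}$ is a $q$-cyclotomic coset modulo $N_{j}$; and a coset $c_{N_{j+1}/q}(\xi)$ lies in $\pi_{N_{j+1}/n}^{-1}(c_{n/q}(\gamma))$ if and only if $\pi(\xi)$ represents a coset modulo $N_{j}$ lying in $\pi_{N_{j}/n}^{-1}(c_{n/q}(\gamma))$. It follows that $\pi^{-1}$ sends the partition of $\pi_{N_{j}/n}^{-1}(c_{n/q}(\gamma))$ into $q$-cyclotomic cosets modulo $N_{j}$ to the partition of $\pi_{N_{j+1}/n}^{-1}(c_{n/q}(\gamma))$ into $q$-cyclotomic cosets modulo $N_{j+1}$: preimages of distinct cosets are disjoint, and every coset modulo $N_{j+1}$ in the larger preimage sits inside the preimage of exactly one coset modulo $N_{j}$ in the smaller one.

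Now fix $\eta\in\mathcal{CR}^{j}(\gamma)$. Since $N_{j}$ is divisible by neither $p$ nor $p_{j+1}$ (the former because $\gcd(n,q)=1$ and each $p_{i}\nmid q$, the latter because the $p_{i}$ are distinct and $p_{j+1}\nmid n$), Lemma \ref{lem 4} applies with its ``$n$'' taken to be $N_{j}$ and its ``$\ell^{u}$'' taken to be $p_{j+1}^{e_{j+1}}$, and yields that $\Sigma_{p_{j+1}^{e_{j+1}}N_{j}/N_{j}}(\eta)$ is a full set of representatives of the $q$-cyclotomic cosets modulo $N_{j+1}$ contained in $\pi_{N_{j+1}/N_{j}}^{-1}(c_{N_{j}/q}(\eta))$. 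By the induction hypothesis the cosets $c_{N_{j}/q}(\eta)$, as $\eta$ ranges over $\mathcal{CR}^{j}(\gamma)$, partition $\pi_{N_{j}/n}^{-1}(c_{n/q}(\gamma))$, so by the previous paragraph their $\pi$-preimages partition $\pi_{N_{j+1}/n}^{-1}(c_{n/q}(\gamma))$. Hence the disjoint union over $\eta$ of the sets $\Sigma_{p_{j+1}^{e_{j+1}}N_{j}/N_{j}}(\eta)$ is a full and irredundant set of representatives of the $q$-cyclotomic cosets modulo $N_{j+1}$ in $\pi_{N_{j+1}/n}^{-1}(c_{n/q}(\gamma))$; by its defining recursion this union is exactly $\mathcal{CR}^{j+1}(\gamma)$, completing the induction.

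I expect the only real work to be bookkeeping: checking that the hypotheses of Lemma \ref{lem 4} hold at each stage (coprimality of $N_{j}$ to $p$ and to $p_{j+1}$), that the nested preimages of cyclotomic cosets behave compatibly along the tower $\mathbb{Z}/N_{s}\mathbb{Z}\to\cdots\to\mathbb{Z}/N_{0}\mathbb{Z}$, and that the unions in the definition of $\mathcal{CR}^{j+1}$ stay genuinely disjoint — which is what upgrades a spanning set to an irredundant set of representatives. There is no conceptual ingredient beyond Lemma \ref{lem 4} and the functoriality of $q$-cyclotomic cosets along residue-ring projections.
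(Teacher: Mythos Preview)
Your proof is correct and follows exactly the approach the paper indicates: the paper simply asserts that ``applying Lemma \ref{lem 4} successively leads to the following theorem,'' and your induction on $s$ along the tower $N_{0}\mid N_{1}\mid\cdots\mid N_{s}$, invoking Lemma \ref{lem 4} at each stage, is precisely that successive application spelled out in full.
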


To present the elements in $\mathcal{CR}_{p_{1}^{e_{1}}\cdots p_{s}^{e_{s}}n/q}(\gamma)$ concretely and determine the sizes of the corresponding cyclotomic cosets, we introduce the following definitions. For any pair $\mathbf{a} = (a_{1},\cdots,a_{k})$ and a nonnegative integer $k^{\prime} \leq k$, we write 
\begin{equation*}
	\mathbf{a}_{\leq k^{\prime}} = \left\{
	\begin{array}{lcl}
		\ast, \ \mathrm{if} \ k^{\prime}=0;\\
		(a_{1},\cdots,a_{k^{\prime}}), \ \mathrm{if} \ k^{\prime} > 0,
	\end{array} \right.
\end{equation*}
where $\ast$ is the empty symbol.

For $j=0$ we set
$$R_{\ast,\ast,\ast}^{0}(\gamma) = \gamma \ \mathrm{and} \ S_{\ast}^{0}(\gamma) = \tau_{\gamma}.$$

Suppose that $j$ is such that $0 \leq j \leq s-1$. Let $\mathbf{m} = (m_{1},\cdots,m_{j},m_{j+1})$ be a pair of integers where $0 \leq m_{k} \leq e_{k}$ for $k=0,\cdots,j+1$, $\mathbf{i} = (i_{1},\cdots,i_{j},i_{j+1})$ be a pair of integers which satisfies that for any $0 \leq k \leq j+1$
\begin{equation*}
	\left\{
	\begin{array}{lcl}
		i_{k} = 1, \ \mathrm{if} \ m_{k}=e_{k};\\
		1 \leq i_{k} \leq p_{k}-1, \ \mathrm{if} \ m_{k} < e_{k} \ \mathrm{and} \ p_{k}\mid q^{S_{\mathbf{m}_{\leq k-1}}^{k-1}(\gamma)}-1;\\
		1 \leq i_{k} \leq \frac{p_{k}-1}{\mathrm{ord}_{p_{k}}(q^{S_{\mathbf{m}_{\leq k-1}}^{k-1}(\gamma)})}, \ \mathrm{if} \ m_{k} < e_{k} \ \mathrm{and} \ p_{k}\nmid q^{S_{\mathbf{m}_{\leq k-1}}^{k-1}(\gamma)}-1,
	\end{array} \right.
\end{equation*}
and $\mathbf{T} = (\mathbf{t}_{1},\cdots,\mathbf{t}_{j},\mathbf{t}_{j+1})$ be a pair consisting of $\mathbf{t}_{k} \in [0,p_{k}-1]^{v_{m_{k}}^{p_{k}^{e_{k}}}(S_{\mathbf{m}_{\leq k-1}}^{k-1}(\gamma))}$. Then for $j+1$ we set
$$R_{\mathbf{m},\mathbf{i},\mathbf{T}}^{j+1}(\gamma) = \mu_{m_{j+1},i_{j+1},\mathbf{t}_{j+1}}^{p_{j+1}^{e_{j+1}},p_{1}^{e_{1}}\cdots p_{j}^{e_{j}}}(R_{\mathbf{m}_{\leq j},\mathbf{i}_{\leq j},\mathbf{T}_{\leq j}}^{j}(\gamma))$$
and $S_{\mathbf{m}}^{j+1}(\gamma)$ to be such that
\begin{itemize}
	\item[(1)] If either $p_{j+1}$ is an odd prime dividing $q^{S_{\mathbf{m}_{\leq j}}^{j}(\gamma)}-1$ or $p_{j+1}=2$ and $q^{S_{\mathbf{m}_{\leq j}}^{j}(\gamma)} \equiv 1 \pmod{4}$, then
	$$S_{\mathbf{m}}^{j+1}(\gamma) = p_{j+1}^{\mathrm{max}\{e_{j+1}-m_{j+1}-v_{p_{j+1}}(q^{S_{\mathbf{m}_{\leq j}}^{j}(\gamma)}-1),0\}}\cdot S_{\mathbf{m}_{\leq j}}^{j}(\gamma);$$
	\item[(2)] If $p_{j+1}=2$ and $q^{S_{\mathbf{m}_{\leq j}}^{j}(\gamma)} \equiv 3 \pmod{4}$, then
	\begin{equation*}
		S_{\mathbf{m}}^{j+1}(\gamma) = \left\{
		\begin{array}{lcl}
			2^{\mathrm{max}\{e_{j+1}-m_{j+1}-v_{2}(q^{S_{\mathbf{m}_{\leq j}}^{j}(\gamma)}+1),1\}}\cdot S_{\mathbf{m}_{\leq j}}^{j}(\gamma), \ \mathrm{if} \ 0 \leq m_{j+1} \leq e_{j+1}-2;\\
			S_{\mathbf{m}_{\leq j}}^{j}(\gamma), \ \mathrm{if} \ m_{j+1}= e_{j+1}-1 \ \mathrm{or} \ e_{j+1};
		\end{array} \right.
	\end{equation*}
	\item[(3)] If $p_{j+1} \nmid q^{S_{\mathbf{m}_{\leq j}}^{j}(\gamma)}-1$, then
	\begin{equation*}
		S_{\mathbf{m}}^{j+1}(\gamma) = \left\{
		\begin{array}{lcl}
			p_{j+1}^{\mathrm{max}}\cdot\mathrm{ord}_{p_{j+1}}(q^{S_{\mathbf{m}_{\leq j}}^{j}(\gamma)})\cdot S_{\mathbf{m}_{\leq j}}^{j}(\gamma), \ \mathrm{if} \ 0 \leq m_{j+1} \leq e_{j+1}-1;\\
			S_{\mathbf{m}_{\leq j}}^{j}(\gamma), \ \mathrm{if} \ m_{j+1}= e_{j+1}.
		\end{array} \right.
	\end{equation*}
	where $\mathrm{max} = \mathrm{max}\{e_{j+1}-m_{j+1}-v_{p_{j+1}}(q^{S_{\mathbf{m}_{\leq j}}^{j}(\gamma)\cdot \mathrm{ord}_{p_{j+1}}(q^{S_{\mathbf{m}_{\leq j}}^{j}(\gamma)})}-1),0\}$.
\end{itemize}
In particular, for $j=s$ we omit the superscript and simply write $R_{\mathbf{m},\mathbf{i},\mathbf{T}}(\gamma) = R_{\mathbf{m},\mathbf{i},\mathbf{T}}^{s}(\gamma)$ and $S_{\mathbf{m}}(\gamma) = S_{\mathbf{m}}^{s}(\gamma)$. We denote by $I_{p_{1}^{e_{1}}\cdots p_{s}^{e_{s}}n/q}(\gamma)$ the set of pairs $(\mathbf{m},\mathbf{i},\mathbf{T})$ which satisfy that
\begin{itemize}
	\item[(1)] $\mathbf{m} = (m_{1},\cdots,m_{s})$ is a pair of integers where $0 \leq m_{k} \leq e_{k}$ for $k=1,\cdots,s$;
	\item[(2)] $\mathbf{i} = (i_{1},\cdots,i_{s})$ is a pair of integers such that for any $1 \leq k \leq s$
	\begin{equation*}
		\left\{
		\begin{array}{lcl}
			i_{k} = 1, \ \mathrm{if} \ m_{k}=e_{k};\\
			1 \leq i_{k} \leq p_{k}-1, \ \mathrm{if} \ m_{k} < e_{k} \ \mathrm{and} \ p_{k}\mid q^{S_{\mathbf{m}_{\leq k-1}}^{k-1}(\gamma)}-1;\\
			1 \leq i_{k} \leq \frac{p_{k}-1}{\mathrm{ord}_{p_{k}}(q^{S_{\mathbf{m}_{\leq k-1}}^{k-1}(\gamma)})}, \ \mathrm{if} \ m_{k} < e_{k} \ \mathrm{and} \ p_{k}\nmid q^{S_{\mathbf{m}_{\leq k-1}}^{k-1}(\gamma)}-1,
		\end{array} \right.
	\end{equation*}
	\item[(3)] $\mathbf{T} = (\mathbf{t}_{1},\cdots,\mathbf{t}_{s})$ consists of $\mathbf{t}_{k} \in [0,p_{k}-1]^{v_{m_{k}}^{p_{k}^{e_{k}}}(S_{\mathbf{m}_{\leq k-1}}^{k-1}(\gamma))}$.
\end{itemize}

\begin{theorem}\label{thm 3}
	The map
	$$I_{p_{1}^{e_{1}}\cdots p_{s}^{e_{s}}n/q}(\gamma) \rightarrow \mathcal{CR}_{p_{1}^{e_{1}}\cdots p_{s}^{e_{s}}n/q}(\gamma); \ (\mathbf{m},\mathbf{i},\mathbf{T}) \mapsto R_{\mathbf{m},\mathbf{i},\mathbf{T}}(\gamma)$$
	is a bijection. For any $(\mathbf{m},\mathbf{i},\mathbf{T}) \in I_{p_{1}^{e_{1}}\cdots p_{s}^{e_{s}}n/q}(\gamma)$, the size of the coset $c_{p_{1}^{e_{1}}\cdots p_{s}^{e_{s}}n/q}(R_{\mathbf{m},\mathbf{i},\mathbf{T}}(\gamma))$ is 
	$$|c_{p_{1}^{e_{1}}\cdots p_{s}^{e_{s}}n/q}(R_{\mathbf{m},\mathbf{i},\mathbf{T}}(\gamma))| = S_{\mathbf{m}}(\gamma).$$
\end{theorem}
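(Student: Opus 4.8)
The plan is to prove the two assertions --- the bijectivity of $(\mathbf{m},\mathbf{i},\mathbf{T}) \mapsto R_{\mathbf{m},\mathbf{i},\mathbf{T}}(\gamma)$ and the size identity $|c_{p_1^{e_1}\cdots p_s^{e_s}n/q}(R_{\mathbf{m},\mathbf{i},\mathbf{T}}(\gamma))| = S_{\mathbf{m}}(\gamma)$ --- simultaneously, by induction on the number $s$ of primes, peeling off the largest prime $p_s$ at each step. The base case $s = 0$ is tautological: the index set is the singleton consisting of the empty tuple, $\mathcal{CR}^{0}_{p_1^{e_1}\cdots p_s^{e_s}n/q}(\gamma) = \{\gamma\} = \{R^{0}_{\ast,\ast,\ast}(\gamma)\}$, and $S^{0}_{\ast}(\gamma) = \tau_{\gamma} = |c_{n/q}(\gamma)|$ by definition. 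The case of a single prime power $\ell^{u}$ is Lemma \ref{lem 4}, supplemented by two facts from \cite{Zhu3}: that the parametrization $(m,i,\mathbf{t}) \mapsto \mu^{\ell^{u},n}_{m,i,\mathbf{t}}(\gamma)$ of $\Sigma_{\ell^{u}n/n}(\gamma)$ is injective, and that $|c_{\ell^{u}n/q}(\mu^{\ell^{u},n}_{m,i,\mathbf{t}}(\gamma))|$ is given, in terms of $\tau_{\gamma}$, $u$ and $m$, by exactly the case split (1)--(3) appearing in the definition of $S^{j+1}_{\mathbf{m}}$.

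For the inductive step set $n' := p_1^{e_1}\cdots p_{s-1}^{e_{s-1}}n$. Since $p_s$ is different from $p_1,\dots,p_{s-1}$ and divides neither $q$ nor $n$, the modulus $n'$ is coprime to both $p$ and $p_s$, so Lemma \ref{lem 4} and the above refinements apply with $\ell = p_s$, $u = e_s$ and base modulus $n'$. Applying the induction hypothesis to the primes $p_1 < \cdots < p_{s-1}$ over base modulus $n$, the map $(\mathbf{m}_{\leq s-1},\mathbf{i}_{\leq s-1},\mathbf{T}_{\leq s-1}) \mapsto \eta := R^{s-1}_{\mathbf{m}_{\leq s-1},\mathbf{i}_{\leq s-1},\mathbf{T}_{\leq s-1}}(\gamma)$ is a bijection onto $\mathcal{CR}^{s-1}_{p_1^{e_1}\cdots p_s^{e_s}n/q}(\gamma)$ with $|c_{n'/q}(\eta)| = S^{s-1}_{\mathbf{m}_{\leq s-1}}(\gamma)$. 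The crucial observation is that the constraints placed on $(m_s, i_s, \mathbf{t}_s)$ in the definition of $I_{p_1^{e_1}\cdots p_s^{e_s}n/q}(\gamma)$ are phrased entirely through $q^{S^{s-1}_{\mathbf{m}_{\leq s-1}}(\gamma)} = q^{|c_{n'/q}(\eta)|}$, hence cut out exactly the index set attached to $\eta$ in the construction of $\Sigma_{p_s^{e_s}n'/n'}(\eta)$; and by construction $R^{s}_{\mathbf{m},\mathbf{i},\mathbf{T}}(\gamma) = \mu^{p_s^{e_s},n'}_{m_s,i_s,\mathbf{t}_s}(\eta)$.

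It remains to glue over the fibers indexed by $\eta$. Since $\pi_{p_s^{e_s}n'/n} = \pi_{n'/n}\circ\pi_{p_s^{e_s}n'/n'}$ and $\mathcal{CR}^{s-1}_{p_1^{e_1}\cdots p_s^{e_s}n/q}(\gamma)$ is (by the induction hypothesis) a full set of representatives of the $q$-cyclotomic cosets modulo $n'$ inside $\pi_{n'/n}^{-1}(c_{n/q}(\gamma))$, the preimages $\pi_{p_s^{e_s}n'/n'}^{-1}(c_{n'/q}(\eta))$ over distinct such $\eta$ are pairwise disjoint with union $\pi_{p_s^{e_s}n'/n}^{-1}(c_{n/q}(\gamma))$; hence $\mathcal{CR}^{s}_{p_1^{e_1}\cdots p_s^{e_s}n/q}(\gamma) = \bigsqcup_{\eta}\Sigma_{p_s^{e_s}n'/n'}(\eta)$ is a genuine disjoint union and is a full set of representatives of the cosets in $\pi_{p_s^{e_s}n'/n}^{-1}(c_{n/q}(\gamma))$ (recovering in particular the theorem immediately preceding). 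Over each $\eta$ the map under consideration restricts to the injective parametrization of $\Sigma_{p_s^{e_s}n'/n'}(\eta)$ furnished by Lemma \ref{lem 4}, which is a bijection onto it; as the fibers have disjoint images, the total map $I_{p_1^{e_1}\cdots p_s^{e_s}n/q}(\gamma) \to \mathcal{CR}^{s}_{p_1^{e_1}\cdots p_s^{e_s}n/q}(\gamma)$ is a bijection. For the size, $|c_{p_1^{e_1}\cdots p_s^{e_s}n/q}(R^{s}_{\mathbf{m},\mathbf{i},\mathbf{T}}(\gamma))| = |c_{p_s^{e_s}n'/q}(\mu^{p_s^{e_s},n'}_{m_s,i_s,\mathbf{t}_s}(\eta))|$ is the single-prime-power size rule evaluated at $|c_{n'/q}(\eta)| = S^{s-1}_{\mathbf{m}_{\leq s-1}}(\gamma)$, $e_s$ and $m_s$, which matched against cases (1)--(3) is exactly $S^{s}_{\mathbf{m}}(\gamma)$. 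This closes the induction.

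I expect the genuine difficulty to lie not in the inductive scaffolding above but in the single-prime-power input imported from \cite{Zhu3}: showing that the $\mu^{\ell^{u},n}_{m,i,\mathbf{t}}(\gamma)$ are pairwise distinct and exhaust $\pi_{\ell^{u}n/n}^{-1}(c_{n/q}(\gamma))$, and that their coset sizes follow the three-way case split --- in particular the anomaly at $\ell = 2$ with $q^{\tau_{\gamma}} \equiv 3 \pmod{4}$, where $v_{2}(q^{\tau}-1)$ fails to grow in the naive way --- which is a careful lifting-the-exponent analysis of $\mathrm{ord}_{\ell^{a}}(q^{\tau})$ as $a$ increases. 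Granting that analysis, the multi-prime reduction is pure bookkeeping: one only has to verify that the substitution $\tau_{\gamma} \rightsquigarrow S^{s-1}_{\mathbf{m}_{\leq s-1}}(\gamma)$ carries the single-prime index set (including the ranges of the $i_k$ and the lengths $v_{m_k}^{p_k^{e_k}}$ of the $\mathbf{t}_k$) and the single-prime size formula onto the recursively defined $I$ and $S$.
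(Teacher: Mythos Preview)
The paper does not actually prove this theorem: Section 3.2 is explicitly flagged as ``a brief reminder'' with ``the detailed treatment and the proofs'' deferred to \cite{Zhu3}, and Theorem \ref{thm 3} is stated there without proof. The only proof-like remark in the vicinity is the one-line justification of the preceding theorem --- ``applying Lemma \ref{lem 4} successively'' --- which is precisely the inductive scaffolding you have written out in full. Your argument is correct and is the intended one: induct on $s$, use Lemma \ref{lem 4} (plus the injectivity of the $\mu$-parametrization and the single-prime size formula, both from \cite{Zhu3}) for the step from $n' = p_1^{e_1}\cdots p_{s-1}^{e_{s-1}}n$ to $p_s^{e_s}n'$, and observe that the recursive definitions of $I$, $R$ and $S$ are rigged exactly so that the substitution $\tau_\gamma \rightsquigarrow S^{s-1}_{\mathbf{m}_{\leq s-1}}(\gamma)$ transports the single-prime data to the multi-prime data. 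Your closing paragraph correctly locates the genuine content in the single-prime input from \cite{Zhu3}, in particular the $\ell=2$, $q^{\tau_\gamma}\equiv 3\pmod 4$ anomaly; the present paper takes all of that as a black box.
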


For a fixed $\mathbf{m}_{0} = (m_{0,1}\cdots,m_{0,s})$ which satisfies $0 \leq m_{0,k} \leq e_{k}$ for all $k=1,\cdots,s$, we define a subset $\mathcal{CR}_{p_{1}^{e_{1}}\cdots p_{s}^{e_{s}}n/q}^{\mathbf{m}_{0}}(\gamma)$ of $\mathcal{CR}_{p_{1}^{e_{1}}\cdots p_{s}^{e_{s}}n/q}(\gamma)$ by
$$\mathcal{CR}_{p_{1}^{e_{1}}\cdots p_{s}^{e_{s}}n/q}^{\mathbf{m}_{0}}(\gamma) = \{R_{\mathbf{m},\mathbf{i},\mathbf{T}}(\gamma) \in \mathcal{CR}_{p_{1}^{e_{1}}\cdots p_{s}^{e_{s}}n/q}(\gamma) \ | \ \mathbf{m}=\mathbf{m}_{0}\}.$$
Then the set $\mathcal{CR}_{p_{1}^{e_{1}}\cdots p_{s}^{e_{s}}n/q}(\gamma)$ can be classified as
$$\mathcal{CR}_{p_{1}^{e_{1}}\cdots p_{s}^{e_{s}}n/q}(\gamma) = \bigsqcup_{\mathbf{m}\in[0,e_{1}]\times\cdots\times[0,e_{s}]}\mathcal{CR}_{p_{1}^{e_{1}}\cdots p_{s}^{e_{s}}n/q}^{\mathbf{m}}(\gamma).$$
This classification is natural as Theorem \ref{thm 1} yields that the elements lying in the same subset $\mathcal{CR}_{p_{1}^{e_{1}}\cdots p_{s}^{e_{s}}n/q}^{\mathbf{m}}(\gamma)$ give rise to cyclotomic cosets with the same size $S_{\mathbf{m}}(\gamma)$.

\subsubsection{Irreducible factorizations of binomials over finite fields}\label{Sec 2}
Let $X^{N}-\lambda$ be a binomial over $\mathbb{F}_{q}$, where $N \in \mathbb{N}^{+}$ and $\lambda \in \mathbb{F}_{q}^{\ast}$, and let $r$ be the order of $\lambda$. Write $N = p^{v}n$, where $v = v_{p}(N) \geq 0$ and $n$ is coprime to $q$. Then $X^{N}-\lambda$ can be expressed as
$$X^{N}-\lambda = (X^{n}-\lambda^{\prime})^{p^{v}},$$
where $\lambda^{\prime} = \lambda^{t}$ for t satisfying $1 \leq t \leq r-1$ and $tp^{v} \equiv 1 \pmod{r}$. Notice that the order of $\lambda^{\prime}$ is also $r$. Then there is a primitive $nr$-th root $\zeta_{nr}^{\delta}$ of unity such that $\zeta_{nr}^{\delta n}=\lambda^{\prime}$, where $\delta$ is a positive integer coprime to $nr$. It follows from the proof of Theorem \ref{thm 1} that $X^{n^{\prime}}-\lambda^{\prime}$ has the defining set modulo $nr$ given by
$$i_{m,nr}(T) = \{\delta,\delta+r,\cdots,\delta+r(n-1)\} = \delta + r\cdot\mathbb{Z}/nr\mathbb{Z}.$$
Here $m$ denotes the order of $X^{n^{\prime}}-\lambda^{\prime}$. As $r \mid q-1$, the $q$-cyclotomic coset $c_{r/q}(\delta)$ modulo $r$ consists of one element $\delta$, which implies that 
$$i_{m,nr}(T) = \pi_{nr/r}^{-1}(c_{r/q}(\delta)).$$

Assume that $\mathrm{rad}(\mathrm{gcd}(n,r))=r_{1}\cdots r_{t}$ where $r_{1} < \cdots < r_{t}$ are prime numbers. We write $r = r_{1}^{a_{1}}\cdots r_{t}^{a_{t}}r^{\prime}$ and $n = r_{1}^{b_{1}}\cdots r_{t}^{b_{t}}p_{1}^{e_{1}}\cdots p_{s}^{e_{s}}$, where $r^{\prime}$ is coprime to $n$ and $p_{1} < \cdots < p_{s}$ are prime numbers different from $r_{1},\cdots,r_{t}$. The factor $r_{1}^{b_{1}}\cdots r_{t}^{b_{t}}$ of $n$ is denoted by $n_{1}$. We define the following notations:
\begin{itemize}
	\item[(1)] \begin{equation*}
		u_{1} = u_{1}(n,r) =  \left\{
		\begin{array}{lcl}
			\mathrm{min}\{b_{1}-1,v_{r_{1}}(q+1)-a_{1}\}, \quad \mathrm{if} \ r_{1}=2 \ \mathrm{and} \ q \equiv 3 \pmod{4};\\
			\mathrm{min}\{b_{1},v_{r_{1}}(q-1)-a_{1}\}, \quad \mathrm{otherwise}.
		\end{array} \right.
	\end{equation*}
	\item[(2)] $u_{j} = \mathrm{min}\{b_{j},v_{r_{j}}(q-1)-a_{j}\}$ for $j=2,\cdots,t$, and
	\item[(3)] $\Gamma = \Gamma(n,r) = [r_{1}-1]^{u_{1}}\times[r_{t}-1]^{u_{t}}$. In particular, if $u_{j}=0$ for some $j$ then $[r_{j}-1]^{u_{j}}$ is understood to be $\{0\}$.
\end{itemize}
Given any $\mathbf{x} = (\mathbf{x}_{1},\cdots,\mathbf{x}_{t}) \in \Gamma$, where $\mathbf{x}_{j} = (x_{j,1},\cdots,x_{j,u_{j}}) \in [0,r_{j}-1]^{u_{j}}$ for $1 \leq j \leq t$, we define 
\begin{small}
\begin{equation*}
\delta_{\mathbf{x}} =  \left\{
\begin{array}{lcl}
\delta+r\cdot\sum_{j_{1}=1}^{u_{1}}x_{1,j_{1}}\cdot r_{1}^{j_{1}}+rr_{1}^{a_{1}}\cdot\sum_{j_{2}=1}^{u_{2}}x_{1,j_{2}}\cdot r_{2}^{j_{2}-1}+\cdots+rr_{1}^{a_{1}}\cdots r_{t-1}^{a_{t-1}}\cdot\sum_{j_{t}=1}^{u_{t}}x_{1,j_{t}}\cdot r_{t}^{j_{t}-1},\\  ~~~~~~~~~~~~~~~~~~~~~~~~~~~~~~~~~~~~~~~~~~~~~~~~~~~~~~~~~~~~~~~~~~~~~~~~~~~~~~~~~~~~~~~\mathrm{if} \ r_{1}=2 \ \mathrm{and} \ q \equiv 3 \pmod{4};\\
\delta+r\cdot\sum_{j_{1}=1}^{u_{1}}x_{1,j_{1}}\cdot r_{1}^{j_{1}-1}+rr_{1}^{a_{1}}\cdot\sum_{j_{2}=1}^{u_{2}}x_{1,j_{2}}\cdot r_{2}^{j_{2}-1}+\cdots+rr_{1}^{a_{1}}\cdots r_{t-1}^{a_{t-1}}\cdot\sum_{j_{t}=1}^{u_{t}}x_{1,j_{t}}\cdot r_{t}^{j_{t}-1}, \\
~~~~~~~~~~~~~~~~~~~~~~~~~~~~~~~~~~~~~~~~~~~~~~~~~~~~~~~~~~~~~~~~~~~~~~~~~~~~~~~~~~~~~~~\mathrm{otherwise}.
\end{array} \right.
\end{equation*}
\end{small}

\begin{lemma}\label{lem 6}
	The set
	$$\{\delta_{\mathbf{x}} \ | \ \mathbf{x} \in \Gamma(n,r)\}$$
	is a full set of representatives of the $q$-cyclotomic cosets modulo $n_{1}r$ which are contained in $\pi_{n_{1}r/r}^{-1}(c_{r/q}(\delta))$. Moreover, all the cosets $c_{n_{1}r/q}(\delta_{\mathbf{x}})$ have the same size, given by
	\begin{equation*}
		|c_{n_{1}r/q}(\delta_{\mathbf{x}})| =  \left\{
		\begin{array}{lcl}
			r_{1}^{\mathrm{max}\{a_{1}+b_{1}-v_{r_{1}}(q+1),1\}}\cdot\prod\limits_{j=2}^{t}r_{j}^{\mathrm{max}\{a_{j}+b_{j}-v_{r_{j}}(q-1),0\}}, \ \mathrm{if} \ r_{1}=2 \ \mathrm{and} \ q \equiv 3 \pmod{4};\\
			\prod\limits_{j=1}^{t}r_{j}^{\mathrm{max}\{a_{j}+b_{j}-v_{r_{j}}(q-1),0\}}, \ \mathrm{otherwise}.
		\end{array} \right.
	\end{equation*}
\end{lemma}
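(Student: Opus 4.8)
The plan is to rephrase the fibre $\pi_{n_{1}r/r}^{-1}(c_{r/q}(\delta))$ as the orbit space of an affine map on $\mathbb{Z}/n_{1}\mathbb{Z}$, and then to analyse that orbit space one prime at a time. Since $r\mid q-1$, the coset $c_{r/q}(\delta)=\{\delta\}$ is a singleton, so the fibre equals $\delta+r\,\mathbb{Z}/n_{1}r\mathbb{Z}$, a set of $n_{1}$ elements, and $\beta\mapsto\delta+r\beta$ is a bijection $\mathbb{Z}/n_{1}\mathbb{Z}\to\delta+r\,\mathbb{Z}/n_{1}r\mathbb{Z}$. Under it, multiplication by $q$ on the fibre becomes the affine transformation $\tau\colon\beta\mapsto q\beta+e_{0}$ of $\mathbb{Z}/n_{1}\mathbb{Z}$, where $e_{0}=\frac{(q-1)\delta}{r}\bmod n_{1}$ is an integer because $r\mid q-1$. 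Hence the $q$-cyclotomic cosets modulo $n_{1}r$ inside the fibre correspond, with matching cardinalities, to the orbits of $\langle\tau\rangle$ on $\mathbb{Z}/n_{1}\mathbb{Z}$, and it suffices to count and describe these orbits.

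Next I would split by the Chinese Remainder Theorem: $\mathbb{Z}/n_{1}\mathbb{Z}\cong\prod_{j=1}^{t}\mathbb{Z}/r_{j}^{b_{j}}\mathbb{Z}$, and since $\tau$ is affine it respects the splitting, $\tau=\prod_{j}\tau_{j}$ with $\tau_{j}(\beta)=q\beta+e_{0,j}$ and $v_{r_{j}}(e_{0,j})=v_{r_{j}}(q-1)-a_{j}\ge 0$. The heart of the argument is that each $\tau_{j}$ acts \emph{freely}, with common orbit size $r_{j}^{c_{j}}$, where $r_{j}^{c_{j}}$ is the $r_{j}$-power occurring in the asserted size formula. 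Writing $\tau_{j}^{k}(\beta)-\beta=P_{k}\bigl((q-1)\beta+e_{0,j}\bigr)$ with $P_{k}=1+q+\cdots+q^{k-1}$, one notes that $(q-1)\beta+e_{0,j}$ differs from $e_{0,j}$ by a multiple of $r_{j}^{a_{j}}$ while $a_{j}\ge 1$ and $\delta$ is an $r_{j}$-unit, so the parenthesised factor has $r_{j}$-valuation exactly $v_{r_{j}}(e_{0,j})$ independently of $\beta$; hence $\tau_{j}^{k}$ fixes one point iff it fixes all, iff $v_{r_{j}}(P_{k})\ge b_{j}-v_{r_{j}}(e_{0,j})$. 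Evaluating $v_{r_{j}}(P_{k})=v_{r_{j}}(q^{k}-1)-v_{r_{j}}(q-1)$ by the lifting-the-exponent lemma pins down the order of $\tau_{j}$: it is $r_{j}^{\max\{a_{j}+b_{j}-v_{r_{j}}(q-1),\,0\}}$ in the generic case, and $2^{\max\{a_{1}+b_{1}-v_{2}(q+1),\,1\}}$ when $r_{1}=2$ and $q\equiv 3\pmod 4$ (there $a_{1}=1$ is forced, and $v_{2}(q^{k}-1)$ has to be computed separately for $k$ odd and $k$ even). Since the $r_{j}^{c_{j}}$ are pairwise coprime, every $\langle\tau\rangle$-orbit on $\prod_{j}\mathbb{Z}/r_{j}^{b_{j}}\mathbb{Z}$ has the single common size $\prod_{j}r_{j}^{c_{j}}$, which is the claimed value of $|c_{n_{1}r/q}(\delta_{\mathbf{x}})|$, and the number of orbits is $n_{1}/\prod_{j}r_{j}^{c_{j}}=\prod_{j}r_{j}^{b_{j}-c_{j}}=\prod_{j}r_{j}^{u_{j}}=|\Gamma(n,r)|$.

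It then remains to check that $\mathbf{x}\mapsto\delta_{\mathbf{x}}$ lands in pairwise distinct orbits; the cardinality count above upgrades this to a bijection onto a transversal. Writing $\delta_{\mathbf{x}}=\delta+r\beta_{\mathbf{x}}$, the relation $q^{k}\delta_{\mathbf{x}}\equiv\delta_{\mathbf{x}'}\pmod{n_{1}r}$ reduces, through the identification of the first paragraph, to $\beta_{\mathbf{x}}$ and $\beta_{\mathbf{x}'}$ lying in the same $\langle\tau\rangle$-orbit, which I would settle prime by prime: in the generic case each $\langle\tau_{j}\rangle$-orbit is exactly a coset of $r_{j}^{v_{r_{j}}(e_{0,j})}\mathbb{Z}/r_{j}^{b_{j}}\mathbb{Z}$, so the residue modulo $r_{j}^{u_{j}}$ carried by the digits $x_{j,1},\dots,x_{j,u_{j}}$ in $\delta_{\mathbf{x}}$ separates orbits; in the case $r_{1}=2$, $q\equiv 3\pmod 4$ a $\langle\tau_{1}\rangle$-orbit is instead determined by the residue modulo $2^{v_{2}(q+1)}$ of its even members, which is precisely what the shifted exponent $r_{1}^{j_{1}}$ in the definition of $\delta_{\mathbf{x}}$ records. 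I expect this last case to be the main obstacle: there $(\mathbb{Z}/2^{b}\mathbb{Z})^{\times}$ is noncyclic and the naive lifting-the-exponent identity fails, so the $2$-adic bookkeeping must be done by hand (it is precisely the content of \cite{Zhu3}). Alternatively, the whole statement can be recovered by specialising Theorem \ref{thm 3} to the fibre over $c_{r'/q}(\delta\bmod r')$ and then restricting to the sub-fibre of residues congruent to $\delta$ modulo each $r_{j}^{a_{j}}$.
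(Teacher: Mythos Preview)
The paper does not prove this lemma in the text: Section~3.2 is explicitly a summary of~\cite{Zhu3}, and Lemma~\ref{lem 6} is stated as the specialisation of Lemma~\ref{lem 4} iterated over the primes $r_{1},\dots,r_{t}$, starting from the singleton coset $c_{r/q}(\delta)$. Your closing remark about recovering the whole statement from Theorem~\ref{thm 3} is exactly that route.

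Your primary proposal is a genuinely different and more elementary argument. Rewriting the fibre as $\mathbb{Z}/n_{1}\mathbb{Z}$ under the affine action $\tau\colon\beta\mapsto q\beta+e_{0}$, and then observing that $v_{r_{j}}\bigl((q-1)\beta+e_{0,j}\bigr)=v_{r_{j}}(q-1)-a_{j}$ is independent of $\beta$ because $a_{j}\ge 1$, is precisely what makes every orbit the same size; the LTE computation (including the forced $a_{1}=1$ and the floor of $1$ rather than $0$ in the $2$-adic case) is correct. This delivers the uniform-size statement in one stroke, whereas in the paper's recursive framework that uniformity must be traced through the size formulas of Lemma~\ref{lem 4} at each stage. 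Conversely, the paper's machinery produces the representatives $\delta_{\mathbf{x}}$ automatically as output of the recursion, while in your approach you must verify by hand that the paper's particular choice hits pairwise distinct orbits. Your sketch for that step is correct in outline (generic $\tau_{j}$-orbits are cosets of $r_{j}^{u_{j}}\mathbb{Z}/r_{j}^{b_{j}}\mathbb{Z}$; in the exceptional $2$-adic case one passes to the even half of the orbit, which is a coset of $2^{u_{1}+1}\mathbb{Z}/2^{b_{1}}\mathbb{Z}$, and this is what the shifted exponent $r_{1}^{j_{1}}$ encodes), but be aware that $\beta_{\mathbf{x}}\bmod r_{j}^{u_{j}}$ receives contributions from \emph{all} blocks $\mathbf{x}_{k}$, not only $\mathbf{x}_{j}$; the injectivity of $\mathbf{x}\mapsto(\beta_{\mathbf{x}}\bmod r_{j}^{u_{j}})_{j}$ therefore requires a short triangular argument (recover $\mathbf{x}_{1}$ from the $r_{1}$-residue, subtract its contribution, then recover $\mathbf{x}_{2}$, and so on) rather than a direct per-coordinate reading.
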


Combining Theorem \ref{thm 3} and Lemma \ref{lem 6}, one obtains the deomposition of $T$ into a disjoint union of cyclotomic cosets.

\begin{theorem}
	The defining set $T$ of $X^{n}-\lambda^{\prime}$ can be decomposed into a disjoint union of $q$-cyclotomic cosets modulo $nr$ as
	$$T = \pi_{nr/r}^{-1}(c_{r/q}(\delta)) = \bigsqcup_{\mathbf{x} \in \Gamma(n,r)}\bigsqcup_{\mathbf{m} \in [0,e_{1}]\times\cdots\times[0,e_{s}]}\bigsqcup_{R_{\mathbf{m},\mathbf{i},\mathbf{T}}(\delta_{\mathbf{x}}) \in \mathcal{CR}_{nr/q}^{\mathbf{m}}(\delta_{\mathbf{x}})}c_{nr/q}(R_{\mathbf{m},\mathbf{i},\mathbf{T}}(\delta_{\mathbf{x}})).$$
\end{theorem}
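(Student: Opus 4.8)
The plan is to prove the decomposition by a transitivity argument on preimages of nested canonical projections, applying Lemma \ref{lem 6} at the $r_i$-level and Theorem \ref{thm 3} at the $p_j$-level. First I would take as starting point the identity $T = \pi_{nr/r}^{-1}(c_{r/q}(\delta))$, which is already derived in the paragraph preceding the statement: the explicit factorization of $X^{n}-\lambda^{\prime}$ gives $i_{m,nr}(T) = \delta + r\cdot\mathbb{Z}/nr\mathbb{Z}$, and since $r \mid q-1$ the coset $c_{r/q}(\delta)$ is the singleton $\{\delta\}$, so this equals $\pi_{nr/r}^{-1}(c_{r/q}(\delta))$. The remaining task is purely to decompose this preimage into a disjoint union of $q$-cyclotomic cosets modulo $nr$.

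Next I would factor the modulus as $nr = p_{1}^{e_{1}}\cdots p_{s}^{e_{s}}\cdot(n_{1}r)$, where $n_{1}r = r_{1}^{a_{1}+b_{1}}\cdots r_{t}^{a_{t}+b_{t}}r^{\prime}$, and note that $r \mid n_{1}r \mid nr$, so the projection $\pi_{nr/r}$ factors through the intermediate quotient $\mathbb{Z}/n_{1}r\mathbb{Z}$ as $\pi_{nr/r} = \pi_{n_{1}r/r}\circ\pi_{nr/n_{1}r}$. Transitivity of preimages then yields
$$T = \pi_{nr/n_{1}r}^{-1}\bigl(\pi_{n_{1}r/r}^{-1}(c_{r/q}(\delta))\bigr).$$
Applying Lemma \ref{lem 6} to the inner preimage identifies $\pi_{n_{1}r/r}^{-1}(c_{r/q}(\delta))$ with the disjoint union $\bigsqcup_{\mathbf{x} \in \Gamma(n,r)} c_{n_{1}r/q}(\delta_{\mathbf{x}})$ of cosets modulo $n_{1}r$. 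Since taking preimages preserves both unions and disjointness, I obtain
$$T = \bigsqcup_{\mathbf{x} \in \Gamma(n,r)} \pi_{nr/n_{1}r}^{-1}\bigl(c_{n_{1}r/q}(\delta_{\mathbf{x}})\bigr).$$

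Then I would apply Theorem \ref{thm 3} to each summand, taking $n_{1}r$ as the base modulus and $p_{1} < \cdots < p_{s}$ as the adjoined primes with exponents $e_{1},\cdots,e_{s}$. Before doing so I must check the hypothesis that each $p_{j}$ is coprime to both $q$ and the base modulus $n_{1}r$: coprimality with $q$ holds since $p_{j}\mid n$ and $\gcd(n,q)=1$, while coprimality with $n_{1}r = n_{1}\cdot r_{1}^{a_{1}}\cdots r_{t}^{a_{t}}\cdot r^{\prime}$ follows because the $p_{j}$ are chosen distinct from $r_{1},\cdots,r_{t}$ and $r^{\prime}$ is coprime to $n$, hence to each $p_{j}$. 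Theorem \ref{thm 3} then decomposes $\pi_{nr/n_{1}r}^{-1}(c_{n_{1}r/q}(\delta_{\mathbf{x}}))$ as the disjoint union of the cosets $c_{nr/q}(R_{\mathbf{m},\mathbf{i},\mathbf{T}}(\delta_{\mathbf{x}}))$ over $\mathcal{CR}_{nr/q}(\delta_{\mathbf{x}})$; refining this union by the classification $\mathcal{CR}_{nr/q}(\delta_{\mathbf{x}}) = \bigsqcup_{\mathbf{m}} \mathcal{CR}_{nr/q}^{\mathbf{m}}(\delta_{\mathbf{x}})$ produces exactly the triple disjoint union in the statement.

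The argument is essentially formal once the two cited structural results are in hand, so I do not expect a genuine obstacle. The only points requiring care are verifying the factorization $\pi_{nr/r} = \pi_{n_{1}r/r}\circ\pi_{nr/n_{1}r}$ together with the matching coprimality conditions that license Theorem \ref{thm 3}, and confirming that disjointness propagates through the outer preimage so that the union over $\mathbf{x}$ remains disjoint after each fiber is further decomposed.
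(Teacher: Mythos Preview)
Your proposal is correct and follows exactly the approach the paper indicates: the paper's entire argument for this theorem is the single sentence ``Combining Theorem~\ref{thm 3} and Lemma~\ref{lem 6}, one obtains the decomposition of $T$ into a disjoint union of cyclotomic cosets,'' and you have simply unpacked that combination via the transitivity of preimages through the intermediate modulus $n_{1}r$. Your additional verification of the coprimality hypotheses for Theorem~\ref{thm 3} and the preservation of disjointness under preimages makes the paper's terse remark rigorous.
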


For any $\mathbf{x} \in \Gamma(n,r)$, $\mathbf{m} \in [0,e_{1}]\times\cdots\times[0,e_{s}]$ and $R_{\mathbf{m},\mathbf{i},\mathbf{T}}(\delta_{\mathbf{x}}) \in \mathcal{CR}_{nr/q}^{\mathbf{m}}(\delta_{\mathbf{x}})$, it is trivial to verify that 
$$\mathrm{gcd}(R_{\mathbf{m},\mathbf{i},\mathbf{T}}(\delta_{\mathbf{x}}),nr) = p_{1}^{m_{1}}\cdots p_{s}^{m_{s}},$$
and 
$$\frac{nr}{\mathrm{gcd}(R_{\mathbf{m},\mathbf{i},\mathbf{T}}(\delta_{\mathbf{x}}),nr)} = rn_{1}p_{1}^{e_{1}-m_{1}}\cdots p_{s}^{e_{s}-m_{s}}.$$
Therefore we have
$$\mathrm{rad}(\frac{nr}{\mathrm{gcd}(R_{\mathbf{m},\mathbf{i},\mathbf{T}}(\delta_{\mathbf{x}}),nr)}) = \mathrm{rad}(r)\cdot p_{1}^{y_{1}}\cdots p_{s}^{y_{s}},$$
where $y_{j} = \mathrm{min}\{e_{j}-m_{j},1\}$ for all $j = 1,\cdots,s$. We set
\begin{equation*}
	\omega_{\mathbf{m}}(\delta_{\mathbf{x}}) = \left\{
	\begin{array}{lcl}
		2\mathrm{ord}_{\mathrm{rad}(r)\cdot p_{1}^{y_{1}}\cdots p_{s}^{y_{s}}}(q), \quad \mathrm{if} \ q^{\mathrm{ord}_{\mathrm{rad}(r)\cdot p_{1}^{y_{1}}\cdots p_{s}^{y_{s}}}(q)} \equiv 3 \pmod{4} \ \mathrm{and} \ 8 \mid rn_{1}p_{1}^{e_{1}-m_{1}}\cdots p_{s}^{e_{s}-m_{s}};\\
		\mathrm{ord}_{\mathrm{rad}(r)\cdot p_{1}^{y_{1}}\cdots p_{s}^{y_{s}}}(q), \quad \mathrm{otherwise},
	\end{array} \right.
\end{equation*}
and
$$d_{\mathbf{m}}(\delta_{\mathbf{x}}) = \frac{n\omega_{\mathbf{m}}(\delta_{\mathbf{x}})}{S_{\mathbf{m}}(\delta_{\mathbf{x}})}.$$
Then the irreducible factorization of $X^{N}-\lambda$ is given by the following theorem.

\begin{theorem}\label{thm 2}
	The irreducible factorization of $X^{N}-\lambda$ over $\mathbb{F}_{q}$ is given by
	$$X^{N}-\lambda = \prod_{\mathbf{x} \in \Gamma(n,r)}\prod_{\mathbf{m} \in [0,e_{1}]\times\cdots\times[0,e_{s}]}\prod_{R_{\mathbf{m},\mathbf{i},\mathbf{T}}(\delta_{\mathbf{x}}) \in \mathcal{CR}_{nr/q}^{\mathbf{m}}(\delta_{\mathbf{x}})}M_{R_{\mathbf{m},\mathbf{i},\mathbf{T}}(\delta_{\mathbf{x}})}(X)^{p^{v}},$$
	where 
	$$M_{R_{\mathbf{m},\mathbf{i},\mathbf{T}}(\delta_{\mathbf{x}})}(X) = \sum_{j=0}^{\omega_{\mathbf{m}}(\delta_{\mathbf{x}})-1}(-1)^{\omega_{\mathbf{m}}(\delta_{\mathbf{x}})-j}\left(\sum_{\substack{U \subseteq \{0,\cdots,\omega_{\mathbf{m}}(\delta_{\mathbf{x}})-1\}\\ |Y|=\omega_{\mathbf{m}}(\delta_{\mathbf{x}})-j}}\prod_{u \in U}\zeta_{d_{\mathbf{m}}(\delta_{\mathbf{x}})}^{R_{\mathbf{m},\mathbf{i},\mathbf{T}}(\delta_{\mathbf{x}})\cdot q^{u}}\right)X^{\frac{S_{\mathbf{m}}(\delta_{\mathbf{x}})}{\omega_{\mathbf{m}}(\delta_{\mathbf{x}})}\cdot j}.$$
\end{theorem}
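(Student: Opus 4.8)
The plan is to strip off the inseparable part of $X^{N}-\lambda$, reduce the factorization to the cyclotomic-coset decomposition of a defining set, and then compute each irreducible factor explicitly. Concretely, I would invoke the identity $X^{N}-\lambda=(X^{n}-\lambda')^{p^{v}}$ recalled in Section~\ref{Sec 2}, together with the fact that $X^{n}-\lambda'$ is squarefree since $\gcd(n,q)=1$: it suffices to factor $X^{n}-\lambda'$ into distinct monic irreducibles and then raise every factor to the power $p^{v}$. The decomposition of the defining set $T$ of $X^{n}-\lambda'$ modulo $nr$ obtained in the theorem above writes $T$ as the disjoint union of the cosets $c_{nr/q}(R_{\mathbf{m},\mathbf{i},\mathbf{T}}(\delta_{\mathbf{x}}))$. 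Since $X^{n}-\lambda'=\prod_{\gamma\in T}(X-\zeta_{nr}^{\gamma})$ and the one-to-one correspondence between $q$-cyclotomic cosets modulo $nr$ and irreducible factors of $X^{nr}-1$ sends $c_{nr/q}(\gamma)$ to $M_{\gamma}(X)=\prod_{j}(X-\zeta_{nr}^{\gamma q^{j}})$, grouping the linear factors according to cosets yields $X^{n}-\lambda'=\prod M_{R_{\mathbf{m},\mathbf{i},\mathbf{T}}(\delta_{\mathbf{x}})}(X)$, a product of distinct monic irreducibles, the factor indexed by $(\mathbf{m},\mathbf{i},\mathbf{T})$ having degree $S_{\mathbf{m}}(\delta_{\mathbf{x}})$ by Theorem~\ref{thm 3}.

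It then remains to identify $M_{\gamma}(X)$, for $\gamma=R_{\mathbf{m},\mathbf{i},\mathbf{T}}(\delta_{\mathbf{x}})$, with the displayed polynomial. Abbreviate $S=S_{\mathbf{m}}(\delta_{\mathbf{x}})$, $\omega=\omega_{\mathbf{m}}(\delta_{\mathbf{x}})$, $d=d_{\mathbf{m}}(\delta_{\mathbf{x}})$ and $w=S/\omega$. The roots of $M_{\gamma}$ are the $S$ distinct elements $\zeta_{nr}^{\gamma q^{u}}$, $0\le u<S$, all of common order $o:=nr/\gcd(\gamma,nr)=rn_{1}p_{1}^{e_{1}-m_{1}}\cdots p_{s}^{e_{s}-m_{s}}$, where the gcd is the one evaluated just before the statement. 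The heart of the matter is the factorization
$$M_{\gamma}(X)=\prod_{u=0}^{\omega-1}\bigl(X^{w}-\zeta_{d}^{\gamma q^{u}}\bigr).$$
Granting this, expanding the right-hand side as a polynomial in $X^{w}$ and reading off the coefficient of $X^{wj}$ as $(-1)^{\omega-j}$ times the $(\omega-j)$-th elementary symmetric function of $\{\zeta_{d}^{\gamma q^{u}}\}_{0\le u<\omega}$ is exactly the stated expression for $M_{R_{\mathbf{m},\mathbf{i},\mathbf{T}}(\delta_{\mathbf{x}})}(X)$; raising $X^{n}-\lambda'=\prod M_{R_{\mathbf{m},\mathbf{i},\mathbf{T}}(\delta_{\mathbf{x}})}(X)$ to the $p^{v}$-th power then gives the asserted factorization of $X^{N}-\lambda$ with every irreducible appearing with multiplicity $p^{v}$. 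To prove the factorization itself I would match root sets. First I would check that $\omega\mid S$, so that $w$ is a positive integer, and that $w\mid o\mid nr$; this reduces to the divisibility $\mathrm{ord}_{\mathrm{rad}(o)}(q)\mid\mathrm{ord}_{o}(q)$ and to the explicit formulas for $S_{\mathbf{m}}$ (Lemma~\ref{lem 6} and Theorem~\ref{thm 3}) and for $\omega_{\mathbf{m}}$, the latter carrying a corrective factor $2$ precisely to cover the case $q\equiv 3\pmod 4$. Next, using the compatibility $\zeta_{nr}^{w}=\zeta_{nr/w}$ together with $\gcd(\gamma,nr/w)=\gcd(\gamma,nr)$ — which holds because $w$ involves only primes dividing $o/\mathrm{rad}(o)$ and so does not lower the gcd — one obtains $(\zeta_{nr}^{\gamma q^{u}})^{w}=\zeta_{nr/w}^{\gamma q^{u}}$, which by the definition of $d$ equals $\zeta_{d}^{\gamma q^{u}}$, an element of order $o/w$. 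Finally, $o/w$ divides $q^{\omega}-1$ and has the same radical as $o$, hence $\mathrm{ord}_{o/w}(q)=\mathrm{ord}_{\mathrm{rad}(o)}(q)=\omega$, so the $\omega$ values $\zeta_{d}^{\gamma q^{u}}$, $0\le u<\omega$, are pairwise distinct. Combining these, the $w$-th power map carries the $S$ roots of $M_{\gamma}$ onto the $\omega$ distinct values $\zeta_{d}^{\gamma q^{u}}$, and the fibre over $\zeta_{d}^{\gamma q^{u_{0}}}$ consists of the $w$ distinct elements $\zeta_{nr}^{\gamma q^{u'}}$ with $u'\equiv u_{0}\pmod{\omega}$; being $w$ distinct $w$-th roots of $\zeta_{d}^{\gamma q^{u_{0}}}$, these exhaust them. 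Thus $M_{\gamma}(X)$ and $\prod_{u}(X^{w}-\zeta_{d}^{\gamma q^{u}})$ have the same roots, and both are monic of degree $S$, so they coincide.

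The only part of this argument that is not purely formal is the last divisibility step: that $S_{\mathbf{m}}/\omega_{\mathbf{m}}$ is an integer and that $\mathrm{ord}_{o/w}(q)=\omega_{\mathbf{m}}$, i.e., that the order of $q$ modulo $o$ factors through its order modulo $\mathrm{rad}(o)$ exactly by the factor $w$. This is a prime-by-prime comparison of $\mathrm{ord}_{\ell^{k}}(q)$ with $\mathrm{ord}_{\ell}(q)$ whose sole subtlety is the classical $2$-adic anomaly for $q\equiv 3\pmod 4$ — the case singled out throughout in the definitions of $v_{m}^{\ell^{u}}$, $u_{1}$ and $\omega_{\mathbf{m}}$ — and, given the explicit exponents of Lemma~\ref{lem 6} and Theorem~\ref{thm 3}, it is bookkeeping rather than a new idea; the detailed verification is the content of \cite{Zhu3}.
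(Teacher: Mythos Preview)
The paper does not contain its own proof of this theorem: the entire Subsection~3.2 is explicitly described as ``a brief reminder'' with ``the detailed treatment and the proofs'' deferred to \cite{Zhu3}, and Theorem~\ref{thm 2} is stated there without argument. So there is no in-paper proof to compare your proposal against.

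That said, your outline is exactly the natural route and matches what the surrounding material suggests the argument in \cite{Zhu3} must look like: peel off the $p^{v}$-th power, translate the factorization of the squarefree part $X^{n}-\lambda'$ into the cyclotomic-coset decomposition of its defining set (the preceding unnumbered theorem), and then rewrite each $M_{\gamma}$ as a product $\prod_{u=0}^{\omega-1}(X^{w}-\zeta_{d}^{\gamma q^{u}})$ to read off the elementary-symmetric-function coefficients. Your own closing remark that the divisibility bookkeeping ``is the content of \cite{Zhu3}'' is in line with the paper's stance. One small caution: in your penultimate paragraph you write $\mathrm{ord}_{o/w}(q)=\mathrm{ord}_{\mathrm{rad}(o)}(q)=\omega$, but by the very definition of $\omega_{\mathbf{m}}(\delta_{\mathbf{x}})$ this equality fails precisely in the $2$-adic exceptional case, where $\omega=2\,\mathrm{ord}_{\mathrm{rad}(o)}(q)$; you clearly know this (you flag the corrective factor $2$ a few lines earlier), so the sentence just needs to be phrased so that the case split is carried through rather than collapsed.
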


\section{The minimal binomial multiple of $f(X)$}
\subsection{The definitions of minimal binomial multiple}
Let $f(X)$ be a nonconstant polynomial over $\mathbb{F}_{q}$ with a nonzero constant term. The order of $f(X)$ is a classical notion, which is defined to be the smallest positive integer $m$ such that $f(X) \mid X^{m}-1$. Recently, motivated by the study of primitive $k$-normal elements in finite fields, Mart\'{i}nez, Reis and Ribas \cite{Mart\'{i}nez} introduced the definition of freeness of binomials. The polynomial $f(X)$ is said to be free of binomials if it does not divide any binomial with degree less than $m = \mathrm{ord}(f)$. It is natural to raise the further problem to depict explicitly the binomial with the lowest degree that is divided by $f(X)$, which leads to the definition below.

\begin{definition}\label{def 1}
	Let $f(X)$ be a nonconstant polynomial over $\mathbb{F}_{q}$ with a nonzero constant term. Define the minimal binomial multiple of $f(X)$ to be the monic binomial $X^{n}-\lambda \in \mathbb{F}_{q}[X]$ which is of the lowest degree among the binomials over $\mathbb{F}_{q}$ divided by $f(X)$. The degree $n$ is called the minimal binomial order of $f(X)$, and is denoted by $\mathrm{ord}^{\mathrm{b}}(f) = n$.
\end{definition}

Since a polynomial with its constant term being zero divides no binomial, in the remaining part of this section, unless stating otherwise, the polynomials are assumed to be nonconstant and coprime to $X$.

Clearly there is a smallest positive integer $n$ such that there exists a monic binomial of degree $n$, say $X^{n}-\lambda$, that is divisible by $f(X)$. If $X^{n}-\lambda^{\prime}$ is also divide by $f(X)$, then
$$f(X) \mid X^{k}-\lambda^{\prime} - (X^{k}-\lambda) = \lambda-\lambda^{\prime}.$$
Since $f(X)$ is nonconstant, we have $\lambda = \lambda^{\prime}$, which indicates that the monic binomial with the lowest degree to be divided by $f(X)$ is unique. Hence Definition \ref{def 1} is well-defined. Furthermore, the above argument implies the following lemma.

\begin{lemma}
	A polynomial $f(X)$ is free of binomials if and only if 
	$$\mathrm{ord}^{\mathrm{b}}(f) = \mathrm{ord}(f).$$
	If it is this case, then $X^{m}-1$, where $m = \mathrm{ord}(f)$, is the minimal binomial multiple of $f(X)$.
\end{lemma}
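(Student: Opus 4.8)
The plan is to unwind the definitions, relying on two facts already in place: that $f(X) \mid X^{m}-1$ for $m = \mathrm{ord}(f)$, and that the monic binomial of lowest degree divisible by $f(X)$ is unique (this was established in the paragraph immediately preceding the statement). First I would observe that $X^{m}-1$ is a binomial over $\mathbb{F}_{q}$ of degree $m$ which is divisible by $f(X)$; by Definition \ref{def 1} this forces $\mathrm{ord}^{\mathrm{b}}(f) \leq m = \mathrm{ord}(f)$. This inequality holds for every nonconstant $f(X)$ coprime to $X$, and reduces the claimed equivalence to comparing $\mathrm{ord}^{\mathrm{b}}(f)$ with $m$ from below.

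For the forward implication, suppose $f(X)$ is free of binomials. By definition it divides no binomial of degree strictly less than $m$, so every binomial divisible by $f(X)$ has degree at least $m$; in particular $\mathrm{ord}^{\mathrm{b}}(f) \geq m$, and combined with the reverse inequality above this gives $\mathrm{ord}^{\mathrm{b}}(f) = m = \mathrm{ord}(f)$. For the converse, suppose $\mathrm{ord}^{\mathrm{b}}(f) = \mathrm{ord}(f) = m$. Then by the definition of the minimal binomial order, the least degree of a binomial over $\mathbb{F}_{q}$ divisible by $f(X)$ equals $m$, hence no binomial of degree less than $m$ is divisible by $f(X)$; that is precisely the assertion that $f(X)$ is free of binomials.

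Finally, assume the common value of $\mathrm{ord}^{\mathrm{b}}(f)$ and $\mathrm{ord}(f)$ is $m$. The minimal binomial multiple of $f(X)$ is, by Definition \ref{def 1}, the monic binomial of degree $\mathrm{ord}^{\mathrm{b}}(f) = m$ divisible by $f(X)$; since $X^{m}-1$ is such a monic binomial and the monic binomial of lowest degree divisible by $f(X)$ is unique, it must coincide with $X^{m}-1$. I do not expect any real obstacle here beyond taking care to invoke the uniqueness already proved and to keep the two notions $\mathrm{ord}(f)$ and $\mathrm{ord}^{\mathrm{b}}(f)$ distinct throughout; the entire argument is a rearrangement of the definitions of order, minimal binomial order, and freeness of binomials.
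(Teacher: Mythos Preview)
Your proof is correct and follows exactly the approach the paper intends: the paper does not write out a separate proof but simply states that ``the above argument implies the following lemma,'' referring to the uniqueness of the monic binomial of lowest degree together with the definitions, which is precisely what you have unpacked.
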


\subsection{The multiple binomial multiple of a squarefree polynomial}
In this subsection we determine the minimal binomial multiple of $f(X)$, in the case that $f(X)$ has no repeated root. Combining Lemma \ref{lem 2}, \ref{lem 1} and Theorem \ref{thm 1}, the minimal binomial multiple and the minimal binomial order of $f(X)$ can be equivalently characterized as follow.

\begin{theorem}\label{thm 4}
	Let $f(X)$ be a squarefree polynomial over $\mathbb{F}_{q}$ with order $m$, and $T_{f} \subseteq \mathbb{Z}/m\mathbb{Z}$ be the primitive defining set of $f(X)$. Let $d$ be the maximal divisor of $m$ such that there exists an equal-difference subset $E \subseteq \mathbb{Z}/m\mathbb{Z}$ over $\mathbb{F}_{q}$ with common difference $d$ which contains $T_{f}$. Then the binomial 
	$$f_{E}(X) = \prod_{\gamma \in E}(X-\zeta_{m}^{\gamma})$$
	induced by $E$ is the minimal binomial multiple of $f(X)$. In particular, $\frac{m}{d}$ is the minimal binomial order of $f(X)$.
\end{theorem}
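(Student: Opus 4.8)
The plan is to reduce the statement to the three ingredients already available: Theorem~\ref{thm 1} characterizing binomials by equal-difference defining sets, Lemma~\ref{lem 2} characterizing divisibility via containment of defining sets, and Lemma~\ref{lem 1} characterizing which equal-difference subsets are defined over $\mathbb{F}_{q}$. The key preliminary observation is that any monic binomial $g(X)$ divisible by $f(X)$ must, after passing to a suitable common modulus, have a defining set that is an equal-difference subset defined over $\mathbb{F}_{q}$ and that contains the defining set of $f(X)$; conversely every such equal-difference subset yields a binomial multiple. So the minimal binomial multiple corresponds to an equal-difference subset $E \subseteq \mathbb{Z}/m\mathbb{Z}$ over $\mathbb{F}_{q}$ containing $T_{f}$ with $|E| = m/d$ as small as possible, i.e.\ with common difference $d$ as large as possible.

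First I would set up the correspondence carefully. Suppose $g(X) = X^{n'} - \lambda'$ is any monic binomial over $\mathbb{F}_{q}$ with $f(X) \mid g(X)$; it is squarefree exactly when $p \nmid n'$, but we may always reduce to the squarefree core $X^{n''} - \lambda''$ using the identity $X^{n'} - \lambda' = (X^{n''} - \lambda'')^{p^{v}}$ from the start of Section~3.1, and since $f$ is squarefree we have $f \mid X^{n''} - \lambda''$ as well, with $n'' \le n'$; hence the minimal binomial multiple of $f$ is automatically squarefree. By Theorem~\ref{thm 1}, the defining set of $X^{n''} - \lambda''$, taken modulo its order $m'$, is an equal-difference subset; by Lemma~\ref{lem 2}, $f \mid X^{n''}-\lambda''$ forces the containment $i_{m,\mathrm{lcm}(m,m')}(T_{f}) \subseteq i_{m',\mathrm{lcm}(m,m')}(T_{g})$ of defining sets at the common modulus $L = \mathrm{lcm}(m,m')$. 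The next point is that $m \mid m'$: since $f$ divides a binomial of order $m'$, and the order of $f$ is $m$, we get $f \mid X^{m'}-\lambda''$ divides $\ldots$; more directly, $X^{m''} - \lambda''$ has order $m'$ and $f \mid X^{m'} - 1$ after raising to the power $\mathrm{ord}(\lambda'')$, so $m \mid m' \cdot \mathrm{ord}(\lambda'')$, and a short argument using that $E' := i_{m',m'\cdot r''}(T_g)$ is equal-difference of common difference $r'' = \mathrm{ord}(\lambda'')$ (as computed in the proof of Theorem~\ref{thm 1}) lets us realize $T_f$ inside an equal-difference subset of $\mathbb{Z}/m\mathbb{Z}$ itself after intersecting with the copy of $\mathbb{Z}/m\mathbb{Z}$. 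I would then invoke the auxiliary lemma (equal-difference is preserved and reflected under $i_{n,m}$) to transport everything down to $\mathbb{Z}/m\mathbb{Z}$: the upshot is that $T_f$ is contained in an equal-difference subset $E \subseteq \mathbb{Z}/m\mathbb{Z}$, defined over $\mathbb{F}_{q}$ by Lemma~\ref{lem 1} (which applies because $X^{n''}-\lambda''$, hence $f_E$, is over $\mathbb{F}_q$), and $n'' = |E| = m/d_E$ where $d_E$ is the common difference.

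Conversely, given the maximal such $d$ as in the statement, the equal-difference subset $E$ with common difference $d$ containing $T_f$ is defined over $\mathbb{F}_q$ by hypothesis, and the computation displayed just after Theorem~\ref{thm 1} shows $f_E(X) = X^{m/d} - \zeta_m^{\gamma m/d}$ is a genuine binomial over $\mathbb{F}_q$; since $E \supseteq T_f$, Lemma~\ref{lem 2} (at modulus $m$, where $E$ is the defining set of $f_E$ and $T_f$ that of $f$) gives $f \mid f_E$. Thus $f_E$ is a binomial multiple of degree $m/d$, and by the first part every binomial multiple of $f$ has degree $\ge m/d$ with equality forcing the defining set to be exactly an equal-difference subset of common difference $d$ containing $T_f$ --- and by maximality of $d$ together with uniqueness of the minimal binomial multiple (established in the subsection "The definitions of minimal binomial multiple"), this subset is $E$ and the binomial is $f_E$. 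Hence $f_E$ is the minimal binomial multiple and $m/d = \mathrm{ord}^{\mathrm{b}}(f)$.

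I expect the main obstacle to be the descent step: showing that the defining set of an arbitrary binomial multiple $X^{n''}-\lambda''$, which a priori lives modulo $m' = \mathrm{lcm}$-type modulus possibly larger than $m$, can be replaced without loss by an equal-difference subset of $\mathbb{Z}/m\mathbb{Z}$ itself. The subtlety is that intersecting the large equal-difference set with the embedded copy $i_{m,L}(\mathbb{Z}/m\mathbb{Z})$ need not obviously remain equal-difference, and one must use both that $m \mid$ (order of the binomial times $\mathrm{ord}(\lambda'')$) and that $T_f$ --- being the defining set of $f$ of order $m$ --- already "fills out" the relevant coordinates. I would handle this by working at the modulus $m \cdot r''$ where $r'' = \mathrm{ord}(\lambda'')$, where the proof of Theorem~\ref{thm 1} gives the binomial's defining set explicitly as $\{\delta, \delta + r'', \ldots\}$, i.e.\ common difference $r''$ dividing $m\cdot r''$; the containment of $T_f$ then pins $\gcd$-type constraints that let one produce the equal-difference subset of $\mathbb{Z}/m\mathbb{Z}$ of common difference $d = (m\cdot r'')/n'' \cdot (1/r'')$-adjusted appropriately, and verify it is over $\mathbb{F}_q$ via Lemma~\ref{lem 1}. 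The rest is bookkeeping with the embeddings $i_{n,N}$ and the compatibility relation $\zeta_N^{N/m} = \zeta_m$.
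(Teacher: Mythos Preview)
Your overall strategy matches the paper's: set up the correspondence between squarefree binomial multiples of $f$ and equal-difference subsets over $\mathbb{F}_q$ containing $T_f$ (via Theorem~\ref{thm 1}, Lemma~\ref{lem 2}, Lemma~\ref{lem 1}), then invoke maximality of $d$. Your preliminary reduction to squarefree binomial multiples is correct and in fact tidies up a point the paper leaves implicit.

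Where you fall short is the descent step, and your proposed route through the auxiliary modulus $m\cdot r''$ with $\gcd$ bookkeeping on $\delta$ is an unnecessary detour that you do not actually carry out. Two elementary observations dissolve the difficulty. First, $m \mid M$ is immediate: if $X^{k}-\beta$ has order $M$ then $f \mid X^{k}-\beta \mid X^{M}-1$, so $m=\mathrm{ord}(f)$ divides $M$ by definition; there is no need to invoke $\mathrm{ord}(\lambda'')$. Second, two equal-difference subsets of $\mathbb{Z}/M\mathbb{Z}$ sharing a common point intersect in an equal-difference subset whose common difference is the $\mathrm{lcm}$ of the two. The paper exploits this directly: both the defining set $T$ of $X^{k}-\beta$ (common difference $D$, so $k=M/D$) and $i_{m,M}(E)$ (common difference $dM/m$) contain $i_{m,M}(T_f)$, hence their intersection is equal-difference with common difference $D'=\mathrm{lcm}(D,\,dM/m)$. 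Since $M/m \mid D'$, this intersection lies in the image of $i_{m,M}$ and descends to an equal-difference subset of $\mathbb{Z}/m\mathbb{Z}$ of common difference $mD'/M$, still containing $T_f$ (and still over $\mathbb{F}_q$, since $\gamma q\in T_f$ forces $\gamma q\equiv\gamma\pmod{mD'/M}$). Maximality of $d$ gives $mD'/M\le d$, whence $D\le D'\le dM/m$ and $k=M/D\ge m/d$. That is the whole argument; no passage to $n''r''$ or explicit root-of-unity coordinates is required.
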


\begin{proof}
	First we show that the integer $d$ and the set $E$ described as in Theorem \ref{thm 4} exist uniquely. Since $\mathbb{Z}/m\mathbb{Z}$ itself is an equal-difference set containing $T_{f}$, and there are finitely many equal-difference subset of $\mathbb{Z}/m\mathbb{Z}$, then one can choose a maximal divisor $d$ of $m$ such that there exists an equal-difference subset, say
	$$E = \{\gamma,\gamma+d,\cdots,\gamma+(\frac{m}{d}-1)d\} \subseteq \mathbb{Z}/m\mathbb{Z},$$
	that is defined over $\mathbb{F}_{q}$ and contains $T_{f}$.  Assume that
	$$E^{\prime} = \{\gamma^{\prime},\gamma^{\prime}+d,\cdots,\gamma^{\prime}+(\frac{m}{d}-1)d\}$$ 
	is also an equal-difference set containing $T_{f}$. Note that $E$ and $E^{\prime}$ are either equal or disjoint, therefore we have $E = E^{\prime}$.
	
	Let $E$ be defined as above. Following from Lemma \ref{lem 2}, \ref{lem 1} and Theorem \ref{thm 1}, $E$ induces a binomial
	$$f_{E}(X) = (X-\zeta_{m}^{\gamma})(X-\zeta_{m}^{\gamma+d})\cdots(X-\zeta_{m}^{\gamma+(\frac{m}{d}-1)d}) = X^{\frac{m}{d}}-\zeta_{m}^{\frac{\gamma m}{d}}$$
	over $\mathbb{F}_{q}$, which is divided by $f(X)$. Then it only remains to prove that $f_{E}(X)$ is of the lowest degree among the binomials divisible by $f(X)$. Let $X^{k} - \beta$ be a binomial over $\mathbb{F}_{q}$ divided by $f(X)$, with order $M$ and defining set $T \subseteq \mathbb{Z}/M\mathbb{Z}$. As $f(X) \mid X^{k} - \beta$, the order $m$ of $f(X)$ divides $M$, and by Lemma \ref{lem 2}
	$$i_{m,M}(T_{f}) \subseteq T.$$
	And $T_{f} \subseteq E$ implies that $i_{m,M}(T_{f}) \subseteq i_{m,M}(E)$. Without lossing generality, we may write $T$ and $i_{m,M}(E)$ as
	$$T = \{\gamma\cdot\frac{M}{m},\gamma\cdot\frac{M}{m}+D,\cdots,\gamma\cdot\frac{M}{m}+(\frac{M}{D}-1)D\}$$
	and
	$$i_{m,M}(E) = \{\gamma\cdot\frac{M}{m},\gamma\cdot\frac{M}{m}+d\cdot\frac{M}{m},\cdots,\gamma\cdot\frac{M}{m}+d\cdot\frac{M}{m}(\frac{m}{d}-1)\}$$
	respectively, where $\gamma \in T_{f}$ and $D \in M$. Thus we obtain that 
	$$i_{m,M}(T_{f}) \subseteq T\cap i_{m,M}(E) = \{\gamma\cdot\frac{M}{m},\gamma\cdot\frac{M}{m}+D^{\prime},\cdots,\gamma\cdot\frac{M}{m}+(\frac{M}{D^{\prime}}-1)D^{\prime}\},$$
	where $D^{\prime} = \mathrm{lcm}(\frac{M}{m}\cdot d,D)$. Since $\frac{M}{m} \mid D^{\prime}$, then
	$$T_{f} \subseteq \{\gamma,\gamma+\frac{mD^{\prime}}{M},\cdots,\gamma+(\dfrac{m}{mD^{\prime}/M}-1)\cdot \frac{mD^{\prime}}{M}\}.$$
	By the definition of $d$, one has $\frac{mD^{\prime}}{M} \leq d$, which implies that 
	$$k = \frac{M}{D} \geq \frac{M}{D^{\prime}} \geq \frac{m}{d}.$$
\end{proof}

Motivated by Theorem \ref{thm 4}, it is natural to raise the following definition.

\begin{definition}
	With the notations defined as in Theorem \ref{thm 4}, the divisor $d$ of $m$ is called the maximal common difference associated to $T_{f}$, and the set $E$ is called the minimal equal-difference set over $\mathbb{F}_{q}$ containing $T_{f}$.
\end{definition}

From the proof of Theorem \ref{thm 4} we can also recover the following result obtained in \cite{Mart\'{i}nez} (Lemma 3.1, \cite{Mart\'{i}nez}).

\begin{corollary}\label{coro 2}
	Let the notations be given as in Theorem \ref{thm 4}. If a binomial $X^{k}-\beta$ is divided by $f(X)$, then $\frac{m}{d} \mid k$ and $f_{E}(X) \mid X^{k}-\beta$. In particular, $f_{E}(X)$ divides $X^{m}-1$.
\end{corollary}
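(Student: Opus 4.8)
The plan is to extract the two divisibilities from the argument already carried out in the proof of Theorem \ref{thm 4}, repackaged through a greatest common divisor. First recall from that proof that $f_E(X)$ is a binomial over $\mathbb{F}_q$ with $f\mid f_E$. Let $X^k-\beta$ be a binomial over $\mathbb{F}_q$ divisible by $f$. Writing $k=p^{v}k'$ with $\mathrm{gcd}(k',q)=1$ we have $X^k-\beta=(X^{k'}-\beta')^{p^{v}}$ and $\mathrm{rad}(X^k-\beta)=X^{k'}-\beta'$, so since $f$ is squarefree and divides $X^k-\beta$ it divides $X^{k'}-\beta'$. Hence it suffices to prove $f_E\mid X^{k'}-\beta'$ and $\tfrac{m}{d}\mid k'$, for then $f_E\mid (X^{k'}-\beta')^{p^{v}}=X^k-\beta$ and $\tfrac{m}{d}\mid k'\mid k$.

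Next I would form $g(X)=\mathrm{gcd}\big(f_E(X),\,X^{k'}-\beta'\big)$, which is a squarefree polynomial over $\mathbb{F}_q$ coprime to $X$ (being a divisor of the squarefree, $X$-coprime polynomial $f_E$), and show that $g$ is itself a binomial. Passing to the common cyclotomic modulus $M=\mathrm{lcm}(\mathrm{ord}(f_E),\mathrm{ord}(X^{k'}-\beta'))$, the defining set of $g$ modulo $M$ is the intersection of the defining sets of $f_E$ and of $X^{k'}-\beta'$ modulo $M$; by Theorem \ref{thm 1} each of the latter is an equal-difference subset of $\mathbb{Z}/M\mathbb{Z}$, i.e. a coset of a cyclic subgroup, and the intersection of two such cosets is again a coset of a cyclic subgroup (it is nonempty, since it contains the image of $T_f$), hence an equal-difference subset. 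Theorem \ref{thm 1} then shows $g$ is a binomial.

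Finally, since $f$ divides both $f_E$ and $X^{k'}-\beta'$, it divides $g$, so $g$ is a binomial over $\mathbb{F}_q$ divisible by $f$; by the minimality asserted in Theorem \ref{thm 4}, $\deg g\ge\deg f_E=\tfrac{m}{d}$. But $g\mid f_E$, so $\deg g\le\deg f_E$, which forces $g$ and $f_E$ to agree up to a nonzero scalar, and therefore $f_E\mid X^{k'}-\beta'$. For the index statement, the two distinct roots $\zeta_m^{\gamma}$ and $\zeta_m^{\gamma+d}$ of $f_E$ are roots of $X^{k'}-\beta'$, so $\zeta_m^{\gamma k'}=\beta'=\zeta_m^{(\gamma+d)k'}$, whence $\zeta_m^{dk'}=1$, i.e. $m\mid dk'$ and $\tfrac{m}{d}\mid k'$. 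The last assertion is then immediate by applying what was just proved to the binomial $X^m-1$, which is divisible by $f$ since $m=\mathrm{ord}(f)$. I do not anticipate any real obstacle here: the only points needing care are the routine dictionary "defining set of a gcd $=$ intersection of defining sets" over the common modulus $M$ and the squarefree reduction, all the substance already residing in Theorem \ref{thm 4}.
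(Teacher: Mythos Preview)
Your proof is correct and takes a somewhat different route from the paper's. After the same squarefree reduction, the paper directly recycles the explicit intersection computation from the proof of Theorem~\ref{thm 4}: writing the defining set of $X^{k}-\beta$ modulo its order $M$ with common difference $D$ and setting $D'=\mathrm{lcm}(d\cdot\tfrac{M}{m},D)$, the maximality of $d$ established there forces $D'=d\cdot\tfrac{M}{m}$, hence $D\mid d\cdot\tfrac{M}{m}$, which simultaneously yields $\tfrac{m}{d}\mid k$ and the inclusion $i_{m,M}(E)\subseteq T$ (so $f_E\mid X^{k}-\beta$). You instead package the intersection as the defining set of $g=\gcd(f_E,X^{k'}-\beta')$, observe abstractly that an intersection of cosets of cyclic subgroups is again such a coset so that $g$ is a binomial, and then let the minimality of $f_E$ force $g=f_E$; the index divisibility is obtained separately by evaluating at two consecutive roots of $f_E$. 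Your approach is a bit more conceptual and avoids re-entering the arithmetic inside Theorem~\ref{thm 4}'s proof, at the cost of the extra gcd step; the paper's is shorter precisely because that arithmetic is already in hand. One trivial omission: your root argument tacitly assumes $\tfrac{m}{d}\geq 2$ so that $\zeta_m^{\gamma}$ and $\zeta_m^{\gamma+d}$ are distinct; when $d=m$ the conclusion $\tfrac{m}{d}\mid k'$ is vacuous anyway.
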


\begin{proof}
	The last assertion follows from the first one directly. Since $f(X)$ is squarefree, then $f(X) \mid X^{k}-\beta$ implies that $f(X) \mid \mathrm{rad}(X^{k}-\beta)$, which is also a binomial. Without lossing generality, we may assume that $X^{k}-\beta$ is squarefree.
	
	Let $M$ be the order of $X^{k}-\beta$, and 
	$$T = \{\gamma\cdot\frac{M}{m},\gamma\cdot\frac{M}{m}+D,\cdots,\gamma\cdot\frac{M}{m}+(\frac{M}{D}-1)D\} \subseteq \mathbb{Z}/M\mathbb{Z}$$
	be the defining set of $X^{k}-\beta$, where $\gamma \in T_{f}$ and $D \mid M$. Write $D^{\prime} = \mathrm{lcm}(d\cdot\frac{M}{m},D)$. Then by the proof of Theorem \ref{thm 4} one has $D^{\prime} \leq d\cdot\frac{M}{m}$, which implies that $D^{\prime} = d\cdot\frac{M}{m}$ and $D \mid d\cdot\frac{M}{m}$. As $k = \frac{M}{D}$, then $\frac{m}{d} \mid k$. Moreover, by the definition of $E$ one can check directly that $i_{m,M}(E) \subseteq T$, which yields that $f_{E}(X)$ divides $X^{k}-\beta$. 
\end{proof}

Now we compute explicitly the minimal binomial multiple of a squarefree polynomial $f(X)$. We begin with the case where $f(X)$ is irreducible.

\begin{lemma}\label{lem 5}
	Let $f(X)$ be an irreducible polynomial over $\mathbb{F}_{q}$ with order $m$, and $c_{m/q}(\gamma)$ be the $q$-cyclotomic coset modulo $m$ which defines $f(X)$. Then the minimal binomial multiple of $f(X)$ is given by
	$$X^{\frac{m}{d_{f}}}-\zeta_{m}^{\frac{\gamma m}{d_{f}}},$$
	where $d_{f} = \mathrm{gcd}(m,q-1)$. In particular, the minimal binomial order of $f(X)$ is $\frac{m}{d_{f}}$.
\end{lemma}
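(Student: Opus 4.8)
The plan is to deduce this from Theorem \ref{thm 4}. Since $f(X)$ is irreducible over a finite field it is separable, hence squarefree, and since it has a finite order $m$ it is coprime to $X$; so Theorem \ref{thm 4} applies, with primitive defining set $T_{f} = c_{m/q}(\gamma)$. Because $m = \mathrm{ord}(f)$ is the exact order of $f$, the root $\zeta_{m}^{\gamma}$ is a primitive $m$-th root of unity, which forces $\mathrm{gcd}(\gamma, m) = 1$. Theorem \ref{thm 4} then tells us that the minimal binomial multiple of $f(X)$ is $f_{E}(X) = X^{m/d} - \zeta_{m}^{\gamma m/d}$, where $d$ is the maximal common difference associated to $T_{f}$; so it suffices to show $d = d_{f} := \mathrm{gcd}(m, q-1)$.

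First I would verify that $d_{f}$ is an admissible common difference, i.e. that $E = \{\gamma + i d_{f} \mid 0 \le i \le m/d_{f} - 1\}$ is an equal-difference subset over $\mathbb{F}_{q}$ containing $T_{f}$. Since $d_{f} \mid q-1$ we have $q \equiv 1 \pmod{d_{f}}$, hence $\gamma q^{j} \equiv \gamma \pmod{d_{f}}$ for every $j$, so $c_{m/q}(\gamma) \subseteq E$; and by Lemma \ref{lem 1}, $f_{E}(X)$ is defined over $\mathbb{F}_{q}$ precisely because $\gamma q \equiv \gamma \pmod{d_{f}}$. This gives $d_{f} \le d$. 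Conversely, let $E'$ be any equal-difference subset over $\mathbb{F}_{q}$, with common difference $d \mid m$, containing $T_{f}$; since $\gamma \in T_{f} \subseteq E'$, Lemma \ref{lem 1} applied with representative $\gamma$ forces $\gamma q \equiv \gamma \pmod{d}$, that is $d \mid \gamma(q-1)$. As $d \mid m$ and $\mathrm{gcd}(\gamma, m) = 1$, we get $\mathrm{gcd}(\gamma, d) = 1$, hence $d \mid q-1$ and so $d \mid \mathrm{gcd}(m, q-1) = d_{f}$. Therefore $d = d_{f}$, and substituting into Theorem \ref{thm 4} (using the already-computed shape $f_{T}(X) = X^{m/d} - \zeta_{m}^{\gamma m/d}$ of binomials induced by equal-difference sets) yields the minimal binomial multiple $X^{m/d_{f}} - \zeta_{m}^{\gamma m/d_{f}}$ and minimal binomial order $m/d_{f}$.

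The computation is short, and I expect the only genuinely load-bearing points to be: (i) extracting $\mathrm{gcd}(\gamma, m) = 1$ from the fact that $m$ is the exact order of $f(X)$, and (ii) the implication $d \mid \gamma(q-1) \Rightarrow d \mid q-1$, which is where the coprimality of $\gamma$ with $m$ (hence with $d$) is essential --- without it one would only conclude $d \mid \mathrm{gcd}(m, \gamma(q-1))$ rather than the claimed $\mathrm{gcd}(m, q-1)$. Everything else is a direct invocation of Theorem \ref{thm 4} and Lemma \ref{lem 1}.
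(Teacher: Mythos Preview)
Your proposal is correct and follows essentially the same approach as the paper's proof: both reduce to Theorem \ref{thm 4}, extract $\gcd(\gamma,m)=1$ from the primitivity of the defining set, and then use Lemma \ref{lem 1} together with this coprimality to show that the maximal admissible common difference is exactly $\gcd(m,q-1)$. Your write-up is in fact slightly more explicit than the paper's at the key step $d \mid \gamma(q-1) \Rightarrow d \mid q-1$, which the paper compresses into a single if-and-only-if assertion.
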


\begin{proof}
	Notice that the defining set of $X^{\frac{m}{d_{f}}}-\zeta_{m}^{\frac{\gamma m}{d_{f}}}$ is 
	$$E_{f} = \{\gamma,\gamma+d_{f},\cdots,\gamma+(\frac{m}{d_{f}}-1)d_{f}\},$$
	therefore by Theorem \ref{thm 4} it suffices to show that $E_{f}$ is the minimal equal-difference set over $\mathbb{F}_{q}$ containing $T_{f}$.
	
	Since the order of $f(X)$ is $m$, then $\gamma$ is coprime to $m$. Any equal-difference subset $E^{\prime} \subseteq \mathbb{Z}/m\mathbb{Z}$ over $\mathbb{F}_{q}$ contains $T_{f}$ if and only if $\gamma \in E^{\prime}$ and the common difference $d^{\prime}$ of $E^{\prime}$ divides $q-1$. As $d^{\prime} \mid m$, the largest possible value of $d^{\prime}$ is equal to $\mathrm{gcd}(m,q-1)=d_{f}$. On the other hand, by Lemma \ref{lem 2} the set $E_{f}$ is indeed an equal-difference subset over $\mathbb{F}_{q}$ containing $T_{f}$. Hence it is the minimal equal-difference set over $\mathbb{F}_{q}$ containing $T_{f}$.
\end{proof}

\begin{corollary}
	If two irreducible polynomials have the same order, then they also have the same minimal binomial order.
\end{corollary}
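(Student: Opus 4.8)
The plan is to read this off directly from Lemma \ref{lem 5}. That lemma asserts that an irreducible polynomial $f(X)$ over $\mathbb{F}_q$ of order $m$ has minimal binomial order exactly $\tfrac{m}{d_f}$, where $d_f = \gcd(m, q-1)$. The key observation is that the quantity $\tfrac{m}{\gcd(m,q-1)}$ is a function of the single integer $m$ alone, since the ground field $\mathbb{F}_q$ (hence $q-1$) is fixed throughout the paper. It does not depend on the cyclotomic coset $c_{m/q}(\gamma)$ defining $f(X)$, nor on any finer arithmetic of $f$.

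So the argument is: let $f_1(X)$ and $f_2(X)$ be irreducible over $\mathbb{F}_q$ with $\mathrm{ord}(f_1) = \mathrm{ord}(f_2) = m$. Applying Lemma \ref{lem 5} to each gives $\mathrm{ord}^{\mathrm{b}}(f_1) = \tfrac{m}{\gcd(m,q-1)} = \mathrm{ord}^{\mathrm{b}}(f_2)$, which is the claim. There is no real obstacle here — the corollary is an immediate specialization of the explicit formula in Lemma \ref{lem 5}, and the only point worth spelling out is that $\gcd(m, q-1)$ is completely determined by $m$ once $q$ is fixed, so that equal orders force equal minimal binomial orders. (One could also remark in passing that the converse fails: distinct orders can yield the same minimal binomial order, for instance when $m_1 \mid q-1$ and $m_2 \mid q-1$ both give minimal binomial order $1$.)
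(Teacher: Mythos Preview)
Your proposal is correct and matches the paper's approach: the corollary is stated immediately after Lemma~\ref{lem 5} with no separate proof, precisely because the formula $\mathrm{ord}^{\mathrm{b}}(f)=m/\gcd(m,q-1)$ depends only on the order $m$ and not on the particular coset defining $f$. Your write-up makes this dependence explicit, which is exactly the intended reading.
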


Next we consider the minimal binomial multiple of the least common multiple of two squarefree polynomials, with the minimal binomial multiples of these two polynomials being known.

\begin{proposition}\label{prop 1}
	Let $f_{1}(X), f_{2}(X) \in \mathbb{F}_{q}[X]$ be squarefree. For $i=1,2$, let $m_{i}$, $n_{i}$ and $T_{i}$ denote the order, the minimal binomial order and the primitive defining set of $f_{i}(X)$ respectively. Choose a $\gamma \in T_{1}$ and a $\eta \in T_{2}$. Set $m =\mathrm{lcm}(m_{1},m_{2})$ and 
	$$d = \mathrm{gcd}(\frac{m\eta}{m_{2}}-\frac{m\gamma}{m_{1}},\frac{m}{n_{1}},\frac{m}{n_{2}}).$$
	Then the minimal binomial order of $f(X)=\mathrm{lcm}(f_{1}(X),f_{2}(X))$ is $\frac{m}{d}$, and the minimal binomial multiple of $f(X)$ is
	$$X^{\frac{m}{d}} - \zeta_{m}^{\frac{\gamma m^{2}}{m_{1}d}}.$$
\end{proposition}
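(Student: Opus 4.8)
The plan is to compute the minimal equal-difference set over $\mathbb{F}_q$ containing the primitive defining set $T_f$ of $f(X)=\mathrm{lcm}(f_1(X),f_2(X))$, and then invoke Theorem~\ref{thm 4}. First I would identify the order of $f(X)$: since $f=\mathrm{rad}(f_1 f_2)$ is squarefree, $\mathrm{ord}(f)=\mathrm{lcm}(m_1,m_2)=m$, and by Lemma~\ref{lem 2} the primitive defining set of $f$ is $T_f=i_{m_1,m}(T_1)\cup i_{m_2,m}(T_2)$ (after checking this union is still primitive, i.e.\ not contained in a $\mathbb{Z}/m'\mathbb{Z}$ for $m'<m$, which follows because the two pieces together force the modulus to be $\mathrm{lcm}(m_1,m_2)$). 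By Corollary~\ref{coro 2} applied to $f_1$ and $f_2$ individually, the minimal equal-difference set over $\mathbb{F}_q$ containing $i_{m_i,m}(T_i)$ is the image under $i_{m_i,m}$ of the minimal equal-difference set containing $T_i$; concretely it is $\{\tfrac{m\gamma}{m_i}, \tfrac{m\gamma}{m_i}+\tfrac{m}{n_i}, \dots\}$ for $i=1$ (with base point $\gamma\cdot\tfrac{m}{m_1}$) and analogously $\{\tfrac{m\eta}{m_2}+ k\cdot\tfrac{m}{n_2}\}$ for $i=2$, where I use that the common difference of the minimal equal-difference set of $f_i$ is exactly $m_i/n_i$.

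The core step is then a purely combinatorial lemma about $\mathbb{Z}/m\mathbb{Z}$: given two arithmetic-progression subgroullo-cosets $A = a_0 + d_1(\mathbb{Z}/m\mathbb{Z})$ and $B = b_0 + d_2(\mathbb{Z}/m\mathbb{Z})$ (here $d_i=\tfrac{m}{n_i}$, $a_0=\tfrac{m\gamma}{m_1}$, $b_0=\tfrac{m\eta}{m_2}$), the smallest equal-difference set containing $A\cup B$ has common difference $d=\gcd(b_0-a_0, d_1, d_2)$ and equals $a_0 + d(\mathbb{Z}/m\mathbb{Z})$. To see this: any equal-difference set $E=\{c, c+\delta, \dots\}\supseteq A\cup B$ must have $\delta \mid$ every difference of two elements of $A\cup B$; in particular $\delta\mid d_1$ (from $A$), $\delta\mid d_2$ (from $B$), and $\delta \mid (b_0-a_0)$ (a difference of an element of $B$ and an element of $A$), hence $\delta \mid d$. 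Conversely $a_0 + d(\mathbb{Z}/m\mathbb{Z})$ is genuinely an equal-difference set of common difference $d$ containing both $A$ and $B$, since $d\mid d_1$, $d\mid d_2$ and $a_0 \equiv b_0 \pmod d$. (One must be slightly careful that "difference of two elements" in $\mathbb{Z}/m\mathbb{Z}$ is taken as a genuine gcd with $m$ folded in — but $d_i \mid m$ already, and we never need an extra $m$ in the gcd because $\gcd(b_0-a_0,d_1,d_2)$ already divides $m$ via $d_1$.) Then I must verify the $\mathbb{F}_q$-rationality condition of Lemma~\ref{lem 1}: the set $a_0 + d(\mathbb{Z}/m\mathbb{Z})$ is defined over $\mathbb{F}_q$ iff $a_0 q \equiv a_0 \pmod d$; this holds because $a_0 q - a_0$ is a difference of two elements of $c_{m/q}(a_0)$'s image inside the $\mathbb{F}_q$-rational equal-difference hull of $A$ — more cleanly, the minimal equal-difference set of $f_1$ is already $\mathbb{F}_q$-rational with common difference $d_1$, so $a_0 q \equiv a_0 \pmod{d_1}$, and symmetrically $b_0 q \equiv b_0\pmod{d_2}$, while $q(b_0-a_0)\equiv(b_0-a_0)\pmod{d}$ follows from combining these two with $d\mid d_1$, $d\mid d_2$; hence $a_0 q \equiv a_0\pmod d$.

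Finally, translating back: $d=\gcd\!\big(b_0-a_0,\,d_1,\,d_2\big)=\gcd\!\big(\tfrac{m\eta}{m_2}-\tfrac{m\gamma}{m_1},\,\tfrac{m}{n_1},\,\tfrac{m}{n_2}\big)$, which is exactly the $d$ in the statement, and Theorem~\ref{thm 4} gives the minimal binomial order $\tfrac{m}{d}$ and the minimal binomial multiple $f_E(X)=X^{m/d}-\zeta_m^{a_0 \cdot (m/d)/1}$; since $a_0=\tfrac{m\gamma}{m_1}$ this exponent is $\tfrac{\gamma m}{m_1}\cdot\tfrac{m}{d}=\tfrac{\gamma m^2}{m_1 d}$, matching the claimed $X^{m/d}-\zeta_m^{\gamma m^2/(m_1 d)}$. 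I expect the main obstacle to be the bookkeeping around $\mathbb{F}_q$-rationality — making sure the equal-difference hull one writes down is genuinely defined over $\mathbb{F}_q$ and not just over $\mathbb{F}_q(\zeta_m)$ — together with checking that one may use the base point $\gamma\cdot\tfrac{m}{m_1}$ rather than an arbitrary element of $T_f$ (which is fine because any element of $T_1$ lies in the minimal equal-difference set of $f_1$, so shifting the base point only shifts within the same coset $a_0+d(\mathbb{Z}/m\mathbb{Z})$, and the analogous statement shows the answer is independent of the choices of $\gamma\in T_1$ and $\eta\in T_2$).
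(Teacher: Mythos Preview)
Your proposal is correct and follows essentially the same route as the paper's proof: identify $T_f=i_{m_1,m}(T_1)\cup i_{m_2,m}(T_2)$, use the minimal equal-difference sets $E_1,E_2$ of $f_1,f_2$ (via Corollary~\ref{coro 2}) to reduce to finding the smallest equal-difference set containing their images, compute its common difference as $d=\gcd(b_0-a_0,d_1,d_2)$, verify $\mathbb{F}_q$-rationality from that of $E_1$, and conclude by Theorem~\ref{thm 4}. The only cosmetic difference is that you package the middle step as a standalone combinatorial lemma, whereas the paper does it inline; your $\mathbb{F}_q$-rationality check is slightly more roundabout than necessary (you only need $a_0 q\equiv a_0\pmod{d_1}$ together with $d\mid d_1$), but it is correct.
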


\begin{proof}
	It is obvious to see that the order of $f(X)$ is $m$, and the primitive defining set of $f(X)$ is
	$$T_{f} = i_{m_{1},m}(T_{1}) \cup i_{m_{2},m}(T_{2}).$$
	Notice that the defining set of $X^{\frac{m}{d}} - \zeta_{m}^{\frac{\gamma m^{2}}{m_{1}d}}$ is
	$$E = \{\gamma\cdot\frac{m}{m_{1}},\gamma\cdot\frac{m}{m_{1}}+d,\cdots,\gamma\cdot\frac{m}{m_{1}}+(\frac{m}{d}-1)d\} \subseteq \mathbb{Z}/m\mathbb{Z}.$$
	Then it suffices to show that $E$ is the minimal equal-difference set over $\mathbb{F}_{q}$ containing $T_{f}$.
	
	By the proof of Theorem \ref{thm 4} the minimal equal-defining sets over $\mathbb{F}_{q}$ containing $T_{1}$ and $T_{2}$ are respectively given by
	$$E_{1} = \{\gamma,\gamma+\frac{m_{1}}{n_{1}},\cdots,\gamma+(n_{1}-1)\frac{m_{1}}{n_{1}}\}$$
	and 
	$$E_{2} = \{\eta,\eta+\frac{m_{2}}{n_{2}},\cdots,\eta+(n_{2}-1)\frac{m_{2}}{n_{2}}\}.$$
	As $\gamma\cdot\frac{m}{m_{1}} \in E$ and 
	$$d \mid \frac{m}{n_{1}} = \frac{m_{1}}{n_{1}}\cdot\frac{m}{m_{1}},$$
	then 
	$$i_{m_{1},m}(T_{1}) \subseteq i_{m_{1},m}(E_{1}) \subseteq E.$$
	As $d \mid \frac{m\eta}{m_{2}} - \frac{m\gamma}{m_{1}}$, $E$ contains $\frac{m\eta}{m_{2}}$. By the same argument we obtain that 
	$$i_{m_{2},m}(T_{2}) \subseteq i_{m_{2},m}(E_{2}) \subseteq E.$$
	Moreover, since $E_{1}$ is defined over $\mathbb{F}_{q}$, $\frac{m_{1}}{n_{1}} \mid \gamma(q-1)$, which indicates that
	$$\frac{m}{n_{1}} = \frac{m_{1}}{n_{1}}\cdot\frac{m}{m_{1}} \mid \gamma\cdot\frac{m}{m_{1}}(q-1).$$
	Hence $E$ is an equal-difference set over $\mathbb{F}_{q}$ containing $T_{f}$.
	
	On the other hand, if $E^{\prime} \subseteq \mathbb{Z}/m\mathbb{Z}$ is an equal-difference set over $\mathbb{F}_{q}$ containing $T_{f}$, then by Corollary \ref{coro 2} we have the inclusions
	$$i_{m_{1},m}(E_{1}) \subseteq E, \ \mathrm{and} \ i_{m_{2},m}(E_{2}) \subseteq E.$$
	It follows that the common difference $d^{\prime}$ of $E^{\prime}$ divides both $\frac{m}{n_{1}}$, $\frac{m}{n_{2}}$ and $\frac{m\eta}{m_{2}}-\frac{m\gamma}{m_{1}}$, or equivalently, $d^{\prime}$ divides $d = \mathrm{gcd}(\frac{m\eta}{m_{2}}-\frac{m\gamma}{m_{1}},\frac{m}{n_{1}},\frac{m}{n_{2}})$. Consequently, $E$ is the minimal equal-difference set over $\mathbb{F}_{q}$ containing $T_{f}$.
\end{proof}

From Proposition \ref{prop 1} we deduce that the property of freeness of binomials is closed under taking the least common multiple, which generalized Lemma $3.4$ in \cite{Mart\'{i}nez}.

\begin{corollary}
	If both $f_{1}(X)$ and $f_{2}(X)$ are free of binomials then so is $\mathrm{lcm}(f_{1}(X),f_{2}(X))$.
\end{corollary}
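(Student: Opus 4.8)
The plan is to deduce this directly from Proposition \ref{prop 1}, together with the earlier observation that a polynomial $f$ is free of binomials exactly when $\mathrm{ord}^{\mathrm{b}}(f) = \mathrm{ord}(f)$. Working in the squarefree setting of this subsection, write $m_i = \mathrm{ord}(f_i)$, $n_i = \mathrm{ord}^{\mathrm{b}}(f_i)$, and let $T_i$ denote the primitive defining set of $f_i$ for $i = 1,2$; put $m = \mathrm{lcm}(m_1, m_2)$. Then $f = \mathrm{lcm}(f_1, f_2)$ is again squarefree, with $\mathrm{ord}(f) = m$. The hypothesis that $f_1$ and $f_2$ are free of binomials means precisely that $n_1 = m_1$ and $n_2 = m_2$, so that $\frac{m}{n_1} = \frac{m}{m_1}$ and $\frac{m}{n_2} = \frac{m}{m_2}$.

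Next I would apply Proposition \ref{prop 1}: fixing any $\gamma \in T_1$ and $\eta \in T_2$, the minimal binomial order of $f$ equals $\frac{m}{d}$, where
$$d = \mathrm{gcd}\left(\frac{m\eta}{m_2} - \frac{m\gamma}{m_1}, \frac{m}{m_1}, \frac{m}{m_2}\right).$$
In particular $d \mid \mathrm{gcd}\left(\frac{m}{m_1}, \frac{m}{m_2}\right)$. The one arithmetic point is the identity $\mathrm{gcd}\left(\frac{m}{m_1}, \frac{m}{m_2}\right) = \frac{m}{\mathrm{lcm}(m_1, m_2)} = 1$, valid since $m = \mathrm{lcm}(m_1, m_2)$ and $m_1, m_2 \mid m$; hence $d = 1$.

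Finally, this gives $\mathrm{ord}^{\mathrm{b}}(f) = \frac{m}{d} = m = \mathrm{ord}(f)$, so $f = \mathrm{lcm}(f_1, f_2)$ is free of binomials, as claimed; the same reasoning extends by induction to any finite list of squarefree polynomials that are free of binomials. There is no genuine obstacle here: essentially all of the content is packaged in Proposition \ref{prop 1}, and the only places where one might stumble are recognizing that ``free of binomials'' forces $n_i = m_i$ — which is what collapses the outer two arguments of the $\mathrm{gcd}$ to $\frac{m}{m_1}$ and $\frac{m}{m_2}$ — and verifying the coprimality $\mathrm{gcd}\left(\frac{m}{m_1}, \frac{m}{m_2}\right) = 1$.
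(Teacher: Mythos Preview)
Your proof is correct and follows essentially the same approach as the paper: both arguments invoke Proposition \ref{prop 1}, use the hypothesis to set $n_i = m_i$, and conclude $d=1$ from the coprimality $\mathrm{gcd}\!\left(\frac{m}{m_1},\frac{m}{m_2}\right)=1$. The only cosmetic difference is that the paper verifies this coprimality via $\mathrm{gcd}\!\left(\frac{m_2}{\mathrm{gcd}(m_1,m_2)},\frac{m_1}{\mathrm{gcd}(m_1,m_2)}\right)=1$ rather than your identity $\mathrm{gcd}\!\left(\frac{m}{m_1},\frac{m}{m_2}\right)=\frac{m}{\mathrm{lcm}(m_1,m_2)}$.
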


\begin{proof}
	We adopt the notations from Proposition \ref{prop 1}. Since $f_{1}(X)$ and $f_{2}(X)$ are free of binomials, then $n_{1}=m_{1}$ and $n_{2} = m_{2}$. Therefore one has
	$$\mathrm{gcd}(\frac{m}{n_{1}},\frac{m}{n_{2}}) = \mathrm{gcd}(\frac{\mathrm{lcm}(m_{1},m_{2})}{m_{1}},\frac{\mathrm{lcm}(m_{1},m_{2})}{m_{2}}) = \mathrm{gcd}(\frac{m_{2}}{\mathrm{gcd}(m_{1},m_{2})},\frac{m_{1}}{\mathrm{gcd}(m_{1},m_{2})})=1,$$
	which implies that $d=1$. It then follows that $m \mid \frac{\gamma m^{2}}{m_{1}}$ and $\zeta_{m}^{\frac{\gamma m^{2}}{m_{1}}} = 1$.
\end{proof}

Combining Lemma \ref{lem 5} and \ref{prop 1} we obtain the minimal binomial order and the minimal binomial multiple of any squarefree polynomial over $\mathbb{F}_{q}$, which are given by the theorem below.

\begin{theorem}\label{thm 5}
	Let $f(X)$ be a squarefree polynomial over $\mathbb{F}_{q}$. Assume that the order of $f(X)$ is $m$, and the defining set $T_{f} \subseteq \mathbb{Z}/m\mathbb{Z}$ of $f(X)$ is divided into $q$-cyclotomic cosets modulo $m$ as
	$$T_{f} = \bigsqcup_{i=1}^{t}c_{m/q}(\gamma_{i}).$$
	Define $d_{i}=\gamma_{i+1}-\gamma_{i}$ for $i=1,\cdots,t-1$, and
	$$d_{f} = \mathrm{gcd}(d_{1},\cdots,d_{t-1},m,q-1).$$
	Then the minimal binomial order of $f(X)$ is $\frac{m}{d_{f}}$, and the minimal binomial multiple of $f(X)$ is given by
	$$X^{\frac{m}{d_{f}}} - \zeta_{m}^{\frac{\gamma m}{d_{f}}}.$$
\end{theorem}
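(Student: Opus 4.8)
The plan is to apply Theorem \ref{thm 4}, which reduces the problem to computing the maximal divisor $d$ of $m$ for which there is an equal-difference subset of $\mathbb{Z}/m\mathbb{Z}$ over $\mathbb{F}_q$ with common difference $d$ containing $T_f$: once $d=d_f$ is established, the stated formulas follow, since the conclusion of Theorem \ref{thm 4} together with the computation of $f_E(X)$ carried out in Subsection 3.1 gives $\mathrm{ord}^{\mathrm{b}}(f)=m/d_f$ and $f_E(X)=X^{m/d_f}-\zeta_m^{\gamma_1 m/d_f}$ for $E=\{\gamma_1,\gamma_1+d_f,\dots,\gamma_1+(\tfrac{m}{d_f}-1)d_f\}$. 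Before the two halves of the argument I would record the structural fact $\gcd(\gamma_1,\dots,\gamma_t,m)=1$: writing $g$ for this gcd, every root $\zeta_m^{\gamma_i q^k}$ of $f$ is an $(m/g)$-th root of unity, so $f\mid X^{m/g}-1$ and $\mathrm{ord}(f)\le m/g$, forcing $g=1$ because $\mathrm{ord}(f)=m$.

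For the inequality $d\ge d_f$ I would check that the set $E$ above is an equal-difference subset of $\mathbb{Z}/m\mathbb{Z}$ over $\mathbb{F}_q$ containing $T_f$. It is well-formed because $d_f\mid m$; it is defined over $\mathbb{F}_q$ because $d_f\mid q-1$ gives $\gamma_1 q\equiv\gamma_1\pmod{d_f}$, so Lemma \ref{lem 1} applies; and it contains $T_f$ because for each $i$ and $k$ we have $\gamma_i q^k-\gamma_1=\gamma_i(q^k-1)+(\gamma_i-\gamma_1)$, which is divisible by $d_f$ since $d_f\mid q-1\mid q^k-1$ and $d_f\mid d_1+\cdots+d_{i-1}=\gamma_i-\gamma_1$.

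For the reverse inequality I would take any equal-difference set $E'$ over $\mathbb{F}_q$ with common difference $d'$ containing $T_f$ and show $d'\mid d_f$. Automatically $d'\mid m$. Since $\gamma_i,\gamma_{i+1}\in E'$ and any two members of an equal-difference set differ by a multiple of the common difference, $d'\mid\gamma_{i+1}-\gamma_i=d_i$ for $1\le i\le t-1$. Since $E'$ contains the whole coset $c_{m/q}(\gamma_i)$, in particular $\gamma_i q\in E'$, so $d'\mid\gamma_i q-\gamma_i=\gamma_i(q-1)$ for every $i$. Writing $1=\sum_i a_i\gamma_i+bm$ with integers $a_i,b$ (possible by the structural fact) and multiplying by $q-1$ gives $q-1=\sum_i a_i\gamma_i(q-1)+bm(q-1)$, whence $d'\mid q-1$. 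Hence $d'\mid\gcd(d_1,\dots,d_{t-1},m,q-1)=d_f$, so $d=d_f$. Feeding this back into Theorem \ref{thm 4} yields the minimal binomial order $m/d_f$ and the minimal binomial multiple $X^{m/d_f}-\zeta_m^{\gamma_1 m/d_f}$; replacing $\gamma_1$ by any $\gamma_i$ leaves the constant term unchanged, since $d_f\mid\gamma_i-\gamma_1$ forces $m\mid(\gamma_i-\gamma_1)\tfrac{m}{d_f}$.

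The step I expect to be the main obstacle is the divisibility $d'\mid q-1$ in the reverse inequality: the coset condition only yields $d'\mid\gamma_i(q-1)$ for each separate $i$, and it is precisely the coprimality $\gcd(\gamma_1,\dots,\gamma_t,m)=1$ — a consequence of $T_f$ being the \emph{primitive} defining set, i.e.\ of working modulo $\mathrm{ord}(f)$ rather than a proper multiple of it — that promotes these into $d'\mid q-1$. Everything else is a routine manipulation of equal-difference sets. (Alternatively, the theorem can be deduced by induction on $t$ from Lemma \ref{lem 5} and Proposition \ref{prop 1}, but that route requires tracking the orders of $\mathrm{lcm}(f_1,\dots,f_{t-1})$ and $f_t$ together with the identity $\gcd(m/\mathrm{ord}(\mathrm{lcm}(f_1,\dots,f_{t-1})),\,m/\mathrm{ord}(f_t))=1$; the direct argument above seems cleaner.)
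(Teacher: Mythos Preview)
Your proof is correct and proceeds by a genuinely different route from the paper's. The paper derives Theorem~\ref{thm 5} by combining Lemma~\ref{lem 5} (the irreducible case) with Proposition~\ref{prop 1} (the two-factor case), effectively an induction on $t$: it passes to the primitive defining set $c_{m_i/q}(\gamma_i')$ of each $f_i$, records that $\mathrm{lcm}(m_1,\dots,m_t)=m$, and then simplifies the resulting iterated $\gcd$ expression down to $\gcd(d_1,\dots,d_{t-1},m,q-1)$. You instead bypass both auxiliary results and work directly with Theorem~\ref{thm 4}, exhibiting the candidate set $E$ and arguing the two inequalities $d\ge d_f$ and $d'\mid d_f$ by elementary congruence manipulations. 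The key arithmetic input is the same in both arguments---your coprimality $\gcd(\gamma_1,\dots,\gamma_t,m)=1$ is exactly the statement $\mathrm{lcm}(m_1,\dots,m_t)=m$ that the paper invokes---but your packaging is cleaner: the B\'ezout step promoting $d'\mid\gamma_i(q-1)$ to $d'\mid q-1$ is transparent, whereas the paper's simplification of $\gcd\bigl(m,\tfrac{m}{m_1}(q-1),\dots,\tfrac{m}{m_t}(q-1)\bigr)$ to $\gcd(m,q-1)$ hides the same idea inside an $\mathrm{lcm}$ identity. Your direct approach also makes it visible that any equal-difference set containing $T_f$ is automatically defined over $\mathbb{F}_q$ (since $\gamma_1,\gamma_1 q\in T_f$ already forces $d'\mid\gamma_1(q-1)$), a point the paper leaves implicit.
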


\begin{proof}
	Let $f_{i}(X)$ be the irreducible polynomial defined by the $q$-cyclotomic coset $c_{m/q}(\gamma_{i})$ modulo $m$. Then the primitive defining set of $f_{i}(X)$ is $c_{m_{i}/q}(\gamma_{i}^{\prime})$, where $m_{i} = \frac{m}{\mathrm{gcd}(m,\gamma_{i})}$ and $\gamma_{i}^{\prime} = \frac{\gamma_{i}}{\mathrm{gcd}(m,\gamma_{i})}$, and the minimal equal-difference set over $\mathbb{F}_{q}$ containing $c_{m_{i}/q}(\gamma_{i}^{\prime})$ is
    $$E_{f_{i}}=\{\gamma_{i}^{\prime},\gamma_{i}^{\prime}+d_{f_{i}},\cdots,\gamma_{i}^{\prime}+(\frac{m_{i}}{d_{f_{i}}}-1)d_{f_{i}}\},$$
    where $d_{f_{i}} = \mathrm{gcd}(m_{i},q-1)$. Notice that the order of $f(X)$ is $m$, therefore one has
    $$\mathrm{lcm}(m_{1},\cdots,m_{t}) = m.$$
    It follows that the maximal common difference associated to $T_{f}$ is
    \begin{align*}
    	d_{f} &= \mathrm{gcd}(\frac{m\gamma_{1}^{\prime}}{m_{1}}-\frac{m\gamma_{2}^{\prime}}{m_{2}},\cdots,\frac{m\gamma_{t-1}^{\prime}}{m_{t-1}}-\frac{m\gamma_{t}^{\prime}}{m_{t}},\frac{m d_{f_{1}}}{m_{1}},\cdots,\frac{m d_{f_{t}}}{m_{t}})\\
    	&=\mathrm{gcd}(d_{1},\cdots,d_{t},\mathrm{gcd}(m,\frac{m}{m_{1}}(q-1)),\cdots,\mathrm{gcd}(m,\frac{m}{m_{t}}(q-1)))\\
    	&=\mathrm{gcd}(d_{1},\cdots,d_{t},m,q-1).
    \end{align*}
    Here the last equality also follows from the fact that $\mathrm{lcm}(m_{1},\cdots,m_{t}) = m$.
\end{proof}

\begin{corollary}\label{coro 1}
	The following statements are equivalent.
	\begin{description}
		\item[(1)] The polynomial $f(X)$ is free of binomial.
		\item[(2)] $d_{f} = \mathrm{gcd}(d_{1},\cdots,d_{t-1},m,q-1)=1$.
		\item[(3)] The minimal equal-difference set containing $T_{f}$ is $\mathbb{Z}/m\mathbb{Z}$.
	\end{description}
\end{corollary}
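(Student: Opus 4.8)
The plan is to read off all three equivalences directly from Theorem \ref{thm 5} and its proof, so the argument is essentially bookkeeping. First I would settle $(1) \Leftrightarrow (2)$. By the lemma characterizing freeness (stated just above), $f(X)$ is free of binomials precisely when $\mathrm{ord}^{\mathrm{b}}(f) = \mathrm{ord}(f)$, and the right-hand side equals $m$. Theorem \ref{thm 5} gives $\mathrm{ord}^{\mathrm{b}}(f) = \frac{m}{d_{f}}$. Since $d_{f}$ is a positive divisor of $m$, the identity $\frac{m}{d_{f}} = m$ holds if and only if $d_{f} = 1$, which is exactly $(1) \Leftrightarrow (2)$.

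For $(2) \Leftrightarrow (3)$, I would invoke the fact established inside the proof of Theorem \ref{thm 5} that $d_{f} = \mathrm{gcd}(d_{1},\cdots,d_{t-1},m,q-1)$ is precisely the maximal common difference associated to $T_{f}$; hence the minimal equal-difference set $E$ over $\mathbb{F}_{q}$ containing $T_{f}$ has common difference $d_{f}$ and cardinality $\frac{m}{d_{f}}$. If $d_{f} = 1$ then $E = \{\gamma,\gamma+1,\cdots,\gamma+(m-1)\} = \mathbb{Z}/m\mathbb{Z}$; conversely if $E = \mathbb{Z}/m\mathbb{Z}$ then $|E| = m$, forcing $d_{f} = 1$. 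A small point to record before this step is that $\mathbb{Z}/m\mathbb{Z}$ is itself an equal-difference set defined over $\mathbb{F}_{q}$: this is immediate from Lemma \ref{lem 1} because its common difference is $1$, and the binomial it induces is $X^{m}-1$; so "$E = \mathbb{Z}/m\mathbb{Z}$" is a legitimate value for the minimal such set.

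I do not expect a genuine obstacle; the only thing worth flagging is the reading of statement $(3)$: "the minimal equal-difference set containing $T_{f}$" is to be understood as "the minimal equal-difference set \emph{over} $\mathbb{F}_{q}$ containing $T_{f}$" in the sense of the definition following Theorem \ref{thm 4}, since dropping the $\mathbb{F}_{q}$-constraint could in principle produce a strictly smaller set (one with common difference $\mathrm{gcd}(d_{1},\cdots,d_{t-1},m)$ rather than $d_{f}$). With that convention in force the chain $(1) \Leftrightarrow (2) \Leftrightarrow (3)$ is complete, finishing the corollary.
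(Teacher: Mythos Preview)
Your proposal is correct and follows essentially the same approach as the paper: the paper's proof also reads $(1)\Leftrightarrow(2)$ off the identity $\mathrm{ord}^{\mathrm{b}}(f)=m/d_{f}$ from Theorem~\ref{thm 5}, and then notes that $d_{f}=1$ is equivalent to the minimal equal-difference set being $\{\gamma,\gamma+1,\cdots,\gamma+m-1\}=\mathbb{Z}/m\mathbb{Z}$. Your added remarks (that $\mathbb{Z}/m\mathbb{Z}$ is defined over $\mathbb{F}_{q}$, and the reading of ``over $\mathbb{F}_{q}$'' in $(3)$) are sensible clarifications the paper leaves implicit.
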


\begin{proof}
	The polynomial $f(X)$ is free of binomial if and only if
	$$m = \mathrm{ord}(f) = \mathrm{ord}^{\mathrm{b}}(f) = \frac{m}{d_{f}},$$
	that is, $d_{f}=1$. On the other hand, $d_{f}=1$ is equivalent to that the minimal equal-difference set containing $T_{f}$ is
	$$E_{f} = \{\gamma,\gamma+1,\cdots,\gamma+m-1\} = \mathbb{Z}/m\mathbb{Z}.$$
\end{proof}

\subsection{The general case}
Let $f(X)$ be given by
$$f(X) = f_{1}(X)^{h_{1}}\cdots f_{t}(X)^{h_{t}},$$
where $f_{1}(X),\cdots,f_{t}(X)$ are pairwise distinct irreducible polynomials over $\mathbb{F}_{q}$ and $h_{1},\cdots,h_{t}$ are positive integers. Define the radical of $f(X)$ to be 
$$\mathrm{rad}(f) = f_{1}(X)\cdots f_{t}(X).$$

\begin{lemma}\label{lem 3}
	Assume that $\mathrm{rad}(f)$ has minimal binomial order $\mathrm{ord}^{\mathrm{b}}(\mathrm{rad}(f)) = n$, and minimal binomial multiple $X^{n}-\lambda$. Set $u$ to be the smallest positive integer such that $p^{u} \geq h_{i}$ for all $i=1,\cdots,r$. Then the minimal binomial order of $f(X)$ is $\mathrm{ord}^{\mathrm{b}}(f) = p^{u}n$, and the minimal binomial multiple of $f(X)$ is $X^{p^{u}n}-\lambda^{p^{u}}$.
\end{lemma}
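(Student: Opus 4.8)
The plan is to reduce everything to the squarefree case already handled in Theorem \ref{thm 5} by exploiting the fact, recalled in Section~2, that over a field of characteristic $p$ a binomial $X^{N}-\mu$ with $v_{p}(N)=w$ factors as $(X^{N/p^{w}}-\mu')^{p^{w}}$, where $\mu'^{p^{w}}=\mu$ and $N/p^{w}$ is coprime to $q$; in particular every root of $X^{N}-\mu$ has multiplicity exactly $p^{w}$, and $\mathrm{rad}(X^{N}-\mu)=X^{N/p^{w}}-\mu'$. First I would verify that $X^{p^{u}n}-\lambda^{p^{u}}$ is indeed divisible by $f(X)$: since $\mathrm{rad}(f)\mid X^{n}-\lambda$ we get $\mathrm{rad}(f)^{p^{u}}\mid (X^{n}-\lambda)^{p^{u}}=X^{p^{u}n}-\lambda^{p^{u}}$, and because $p^{u}\geq h_{i}$ for every $i$ the polynomial $f(X)=\prod f_{i}(X)^{h_{i}}$ divides $\mathrm{rad}(f)^{p^{u}}=\prod f_{i}(X)^{p^{u}}$, hence divides the binomial. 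This shows $\mathrm{ord}^{\mathrm{b}}(f)$ exists and is at most $p^{u}n$.

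For the reverse inequality, let $X^{k}-\beta$ be any binomial divisible by $f(X)$, and write $k=p^{w}k'$ with $\gcd(k',q)=1$ and $\beta=\beta'^{p^{w}}$ so that $X^{k}-\beta=(X^{k'}-\beta')^{p^{w}}$. Every irreducible factor of $X^{k}-\beta$ occurs with multiplicity exactly $p^{w}$, so $f(X)\mid X^{k}-\beta$ forces $p^{w}\geq h_{i}$ for all $i$, hence $p^{w}\geq p^{u}$ by minimality of $u$, i.e. $w\geq u$. Moreover $\mathrm{rad}(f)\mid \mathrm{rad}(X^{k}-\beta)=X^{k'}-\beta'$, so by the definition of $\mathrm{ord}^{\mathrm{b}}(\mathrm{rad}(f))=n$ we have $n\mid k'$; combining, $k=p^{w}k'\geq p^{u}n$. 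This proves $\mathrm{ord}^{\mathrm{b}}(f)=p^{u}n$. Finally, uniqueness of the monic binomial of that degree (established just before Definition \ref{def 1}) together with the divisibility $f(X)\mid X^{p^{u}n}-\lambda^{p^{u}}$ identifies $X^{p^{u}n}-\lambda^{p^{u}}$ as the minimal binomial multiple.

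The only delicate point is the multiplicity argument: one must be sure that $f(X)\mid X^{k}-\beta$ genuinely implies $p^{w}\geq h_{i}$ for each $i$, which in turn rests on the clean statement that in the characteristic-$p$ factorization of a binomial every root has multiplicity precisely $p^{w}$ (not merely a multiple of it) — this is exactly the structural fact quoted at the start of Section~3.1, so it may simply be cited. A minor bookkeeping caveat: the lemma as stated says $u$ is the smallest \emph{positive} integer with $p^{u}\geq h_{i}$ for all $i$; if all $h_{i}=1$ then $\mathrm{rad}(f)=f$ and one would want $u=0$, so in the write-up I would either allow $u=0$ or note that the $h_{i}\geq 1$ case with some $h_{i}>1$ is the substantive one and the squarefree case is Theorem \ref{thm 5} verbatim. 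Everything else is routine manipulation with the compatible system $\{\zeta_{n}\}$ and Lemma \ref{lem 2}.
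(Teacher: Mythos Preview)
Your proof is correct and follows essentially the same route as the paper's: both directions hinge on writing an arbitrary binomial multiple as $(X^{k'}-\beta')^{p^{w}}$, reading off $w\geq u$ from the multiplicities and $k'\geq n$ from the minimality for $\mathrm{rad}(f)$, then invoking uniqueness of the degree-$p^{u}n$ monic binomial. The only quibble is that ``$n\mid k'$'' is stronger than what the bare definition of $\mathrm{ord}^{\mathrm{b}}$ gives (that is Corollary~\ref{coro 2}), but you only use $k'\geq n$, which does follow from the definition, so the argument stands.
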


\begin{proof}
	As $\mathrm{rad}(f) \mid X^{n}-\lambda$ and $h_{i} \leq p^{u}$ for $i=1,\cdots,r$, then every root of $f(X)$ is a root of 
	$$(X^{n}-\lambda)^{p^{u}} = X^{p^{u}n}-\lambda^{p^{u}},$$
	with the multiplicity in $f(X)$ not larger than that with respect to $X^{p^{u}n}-\lambda^{p^{u}}$. Consequently $f(X)$ divides $X^{p^{u}n}-\lambda^{p^{u}}$. By the definition of minimal binomial order we have
    \begin{equation}\label{eq 3}
	    \mathrm{ord}^{\mathrm{b}}(f) \leq p^{u}n.
    \end{equation}

	To prove the opposite direction, suppose that $X^{N}-\beta$ is the minimal binomial multiple of $f(X)$, where $N = \mathrm{ord}^{\mathrm{b}}(f)$. Write $N = p^{v}N^{\prime}$ where $v \geq 0$ and $N^{\prime}$ is coprime to $p$. Then there is a nonzero element $\beta^{\prime} \in \mathbb{F}_{q}$ such that 
	$$X^{N}-\beta = (X^{N^{\prime}}-\beta^{\prime})^{p^{v}}.$$
	Notice that $f(X) \mid (X^{N^{\prime}}-\beta^{\prime})^{p^{v}}$ if and only if $\mathrm{rad}(f) \mid X^{N^{\prime}}-\beta^{\prime}$ and $h_{i} \leq p^{v}$ for $i=1,\cdots,t$. Therefore from the definition of $n$ and $u$ we obtain that $n \leq N^{\prime}$ and $u \leq v$, which indicates that
	\begin{equation}\label{eq 4}
		\mathrm{ord}^{\mathrm{b}}(f) = p^{v}N^{\prime} \geq p^{u}n.
	\end{equation}
	Then \eqref{eq 3} and \eqref{eq 4} together gives that $\mathrm{ord}^{\mathrm{b}}(f) = p^{u}n$.
	
	Finally, as $\mathrm{ord}^{\mathrm{b}}(f) = p^{u}n$ and $f(X) \mid X^{p^{u}n}-\lambda^{p^{u}}$, the uniqueness of the minimal binomial multiple guarantees that $X^{p^{u}n}-\lambda^{p^{u}}$ is exactly the minimal binomial multiple of $f(X)$.
\end{proof}

Theorem \ref{thm 5} and Lemma \ref{lem 3} together implies the formula for the minimal binomial order and the minimal binomial multiple of $f(X)$ in the general case.

\begin{theorem}\label{thm 7}
	Let $f(X)$ be a nonconstant polynomial over $\mathbb{F}_{q}$, which is coprime to $X$, with the irreducible factorization over $\mathbb{F}_{q}$ given by
	$$f(X) = f_{1}(X)^{h_{1}}\cdots f_{t}(X)^{h_{t}}.$$
	Let $m$ be the order of $\mathrm{rad}(f)$, and $c_{m/q}(\gamma_{i})$ be the $q$-cyclotomic coset modulo $m$ defining the irreducible factor $f_{i}(X)$ for $1 \leq i \leq t$. Set $u$ to be the smallest integer such that $p^{u} \geq h_{i}$ for all $i = 1,\cdots,t$, $d_{i} = \gamma_{i+1}-\gamma_{i}$ for $i=1,\cdots,t-1$, and
	$$d_{f} = \mathrm{gcd}(d_{1},\cdots,d_{t-1},m,q-1).$$
	Then the minimal binomial order of $f(X)$ is $p^{u}\cdot\frac{m}{d_{f}}$, and the minimal binomial multiple of $f(X)$ is given by
	$$X^{\frac{p^{u}m}{d_{f}}}-\zeta_{m}^{\frac{p^{u}\gamma m}{d_{f}}},$$
	where $\gamma$ can be choose to be any $\gamma_{i}$, $1 \leq i \leq t$.
\end{theorem}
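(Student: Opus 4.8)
The plan is to combine Theorem~\ref{thm 5} and Lemma~\ref{lem 3} essentially mechanically, since Theorem~\ref{thm 7} is simply the explicit form of the composite statement. First I would apply Theorem~\ref{thm 5} to the squarefree polynomial $\mathrm{rad}(f) = f_1(X)\cdots f_t(X)$: its order is $m$ by hypothesis, and its defining set modulo $m$ is exactly the disjoint union $\bigsqcup_{i=1}^{t} c_{m/q}(\gamma_i)$ of the $q$-cyclotomic cosets defining the irreducible factors $f_i(X)$. Theorem~\ref{thm 5} then tells us that the maximal common difference associated to $T_{\mathrm{rad}(f)}$ is precisely $d_f = \gcd(d_1,\dots,d_{t-1},m,q-1)$ with $d_i = \gamma_{i+1}-\gamma_i$, that the minimal binomial order of $\mathrm{rad}(f)$ is $n := \frac{m}{d_f}$, and that the minimal binomial multiple of $\mathrm{rad}(f)$ is $X^{\frac{m}{d_f}} - \zeta_m^{\frac{\gamma m}{d_f}}$ for any choice of $\gamma \in \{\gamma_1,\dots,\gamma_t\}$ (the well-definedness independent of the choice of $\gamma$ is part of what Theorem~\ref{thm 5} delivers, since the minimal binomial multiple is unique).

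Next I would feed this output into Lemma~\ref{lem 3}. With $\mathrm{rad}(f)$ having minimal binomial order $n = \frac{m}{d_f}$ and minimal binomial multiple $X^{n} - \lambda$ where $\lambda = \zeta_m^{\frac{\gamma m}{d_f}}$, and with $u$ the smallest integer such that $p^u \geq h_i$ for all $i = 1,\dots,t$, Lemma~\ref{lem 3} gives directly that the minimal binomial order of $f(X)$ is $p^u n = \frac{p^u m}{d_f}$ and that the minimal binomial multiple of $f(X)$ is
$$X^{p^u n} - \lambda^{p^u} = X^{\frac{p^u m}{d_f}} - \zeta_m^{\frac{p^u \gamma m}{d_f}},$$
which is exactly the claimed formula. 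One minor bookkeeping point I would address is that Lemma~\ref{lem 3} is phrased with $u$ a ``smallest positive integer'', whereas Theorem~\ref{thm 7} says ``smallest integer'': these agree except in the trivial edge case where all $h_i = 1$ (so $f$ is already squarefree and $u = 0$), and in that case the statement reduces to Theorem~\ref{thm 5} directly, so the formula still holds; I would note this briefly.

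There is essentially no obstacle here — the proof is a one-line invocation of the two previous results — so the only real care needed is in verifying the chain of substitutions: that $n = \frac{m}{d_f}$ matches between the two theorems, that $\lambda^{p^u} = \big(\zeta_m^{\gamma m/d_f}\big)^{p^u} = \zeta_m^{p^u \gamma m/d_f}$, and that the independence of $\gamma$ is inherited from Theorem~\ref{thm 5}. I would write the proof as a short paragraph making these three identifications explicit and then concluding.
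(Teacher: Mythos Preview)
Your proposal is correct and matches the paper's approach exactly: the paper states Theorem~\ref{thm 7} immediately after Lemma~\ref{lem 3} with only the sentence ``Theorem~\ref{thm 5} and Lemma~\ref{lem 3} together implies the formula,'' and your plan simply makes that invocation explicit. Your handling of the $u=0$ edge case is more careful than the paper itself.
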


Corollary \ref{coro 2} and \ref{coro 1} can be generalized in the general case.

\begin{corollary}
	Let the notations be given as in Theorem \ref{thm 7}. If a binomial $X^{k}-\beta$ is divided by $f(X)$, then $\frac{m}{d} \mid p^{u}\cdot\frac{m}{d_{f}}$ and $$X^{\frac{p^{u}m}{d_{f}}}-\zeta_{m}^{\frac{p^{u}\gamma m}{d_{f}}} \mid X^{k}-\beta.$$ 
	In particular, 
	$$X^{\frac{p^{u}m}{d_{f}}}-\zeta_{m}^{\frac{p^{u}\gamma m}{d_{f}}} \mid X^{m}-1.$$
\end{corollary}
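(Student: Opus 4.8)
The plan is to reduce everything to the squarefree case already settled in Corollary \ref{coro 2}, by passing to $\mathrm{rad}(f)$ and then tracking multiplicities exactly as in the proof of Lemma \ref{lem 3}. Write $B(X)$ for the minimal binomial multiple $X^{p^{u}m/d_{f}}-\zeta_{m}^{p^{u}\gamma m/d_{f}}$ of $f(X)$ provided by Theorem \ref{thm 7}, and recall from Theorem \ref{thm 5} that the minimal binomial multiple of the squarefree polynomial $\mathrm{rad}(f)$ is $f_{E}(X)=X^{m/d_{f}}-\zeta_{m}^{\gamma m/d_{f}}$; since $\mathrm{char}\,\mathbb{F}_{q}=p$ and $(X^{a}-c)^{p}=X^{pa}-c^{p}$, one has $B(X)=f_{E}(X)^{p^{u}}$ in $\mathbb{F}_{q}[X]$. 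With this notation the first assertion amounts to showing that $\mathrm{ord}^{\mathrm{b}}(f)=p^{u}m/d_{f}$ divides $k$ and that $B(X)\mid X^{k}-\beta$.

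So assume $f(X)\mid X^{k}-\beta$. First I would write $k=p^{v}k'$ with $\gcd(p,k')=1$ and factor $X^{k}-\beta=(X^{k'}-\beta')^{p^{v}}$ for the appropriate $\beta'\in\mathbb{F}_{q}^{\ast}$, so that $X^{k'}-\beta'$ is squarefree. Comparing multiplicities exactly as in the proof of Lemma \ref{lem 3}, the divisibility $f(X)\mid(X^{k'}-\beta')^{p^{v}}$ is equivalent to $\mathrm{rad}(f)\mid X^{k'}-\beta'$ together with $h_{i}\le p^{v}$ for all $i$, i.e.\ $u\le v$. Applying Corollary \ref{coro 2} to $\mathrm{rad}(f)$ and the binomial $X^{k'}-\beta'$ then gives $\frac{m}{d_{f}}\mid k'$ and $f_{E}(X)\mid X^{k'}-\beta'$. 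Raising the latter to the $p^{u}$-th power yields $B(X)=f_{E}(X)^{p^{u}}\mid(X^{k'}-\beta')^{p^{u}}$, which divides $(X^{k'}-\beta')^{p^{v}}=X^{k}-\beta$ because $u\le v$; and since $p^{u}\mid p^{v}$ we also obtain $\frac{p^{u}m}{d_{f}}\mid p^{v}k'=k$. This proves the first assertion.

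For the final assertion, I would apply the statement just proved to the binomial $X^{\mathrm{ord}(f)}-1$, which $f(X)$ divides by the very definition of the order. Since $f(X)$ acquires a repeated root whenever some $h_{i}\ge 2$, a short computation gives $\mathrm{ord}(f)=p^{u}m$, and therefore $B(X)\mid X^{\mathrm{ord}(f)}-1=X^{p^{u}m}-1$. Equivalently, this is immediate from raising the last clause $f_{E}(X)\mid X^{m}-1$ of Corollary \ref{coro 2} to the $p^{u}$-th power.

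I do not anticipate a genuine obstacle. The only point requiring a little care is the multiplicity comparison that turns a divisibility by a power of a squarefree binomial into a divisibility by the binomial itself plus the exponent bound $h_{i}\le p^{v}$, and this is precisely the argument already performed in the proof of Lemma \ref{lem 3}. Everything else is a formal consequence of Corollary \ref{coro 2}, Theorem \ref{thm 5} and Theorem \ref{thm 7}.
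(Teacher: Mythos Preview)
Your approach is correct and is exactly the argument the paper intends: the corollary is stated there without proof, as the evident combination of Corollary~\ref{coro 2} (applied to $\mathrm{rad}(f)$) with the multiplicity bookkeeping of Lemma~\ref{lem 3}, which is precisely your reduction $X^{k}-\beta=(X^{k'}-\beta')^{p^{v}}$, $\mathrm{rad}(f)\mid X^{k'}-\beta'$, $u\le v$, and then raising $f_{E}\mid X^{k'}-\beta'$ to the $p^{u}$-th power. One remark: as you implicitly noticed, the displayed divisibility in the ``in particular'' clause should read $X^{p^{u}m}-1$ (your computation $\mathrm{ord}(f)=p^{u}m$ is correct), since for $u>0$ the polynomial $B(X)=f_{E}(X)^{p^{u}}$ has repeated roots and cannot divide the squarefree $X^{m}-1$; the paper's ``$X^{m}-1$'' and the phrase ``$\frac{m}{d}\mid p^{u}\cdot\frac{m}{d_{f}}$'' are typos for $X^{p^{u}m}-1$ and $p^{u}\cdot\frac{m}{d_{f}}\mid k$.
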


\begin{corollary}
	The following statements are equivalent.
	\begin{description}
		\item[(1)] The polynomial $f(X)$ is free of binomial.
		\item[(2)] The radical $\mathrm{rad}(f)$ of $f(X)$ is free of binomials.
		\item[(3)] $d_{f} = \mathrm{gcd}(d_{1},\cdots,d_{t-1},m,q-1)=1$.
		\item[(4)] The minimal equal-difference set containing $T_{f}$ is $\mathbb{Z}/m\mathbb{Z}$.
	\end{description}
\end{corollary}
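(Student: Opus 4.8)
The plan is to reduce everything to the squarefree case via the passage from $f(X)$ to $\mathrm{rad}(f)$. I would first prove $(1)\Leftrightarrow(2)$. Recall the earlier characterization that a polynomial is free of binomials precisely when its minimal binomial order equals its order. Lemma~\ref{lem 3} gives $\mathrm{ord}^{\mathrm{b}}(f)=p^{u}\,\mathrm{ord}^{\mathrm{b}}(\mathrm{rad}(f))$, so it remains to establish the companion identity
$$\mathrm{ord}(f)=p^{u}\,\mathrm{ord}(\mathrm{rad}(f))=p^{u}m.$$
This follows from the same elementary manipulation used in the proof of Lemma~\ref{lem 3}: writing $k=p^{a}k'$ with $\gcd(p,k')=1$ we have $X^{k}-1=(X^{k'}-1)^{p^{a}}$, and $f(X)\mid(X^{k'}-1)^{p^{a}}$ holds if and only if $\mathrm{rad}(f)\mid X^{k'}-1$ and $p^{a}\geq h_{i}$ for every $i$, that is, if and only if $m\mid k'$ and $a\geq u$; the least such $k$ is $p^{u}m$. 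Comparing the two identities, $\mathrm{ord}^{\mathrm{b}}(f)=\mathrm{ord}(f)$ is equivalent to $\mathrm{ord}^{\mathrm{b}}(\mathrm{rad}(f))=\mathrm{ord}(\mathrm{rad}(f))$, i.e.\ to $\mathrm{rad}(f)$ being free of binomials.

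Next I would deduce $(2)\Leftrightarrow(3)\Leftrightarrow(4)$ by applying Corollary~\ref{coro 1} to the squarefree polynomial $\mathrm{rad}(f)$. Its order is $m$, and since $f$ and $\mathrm{rad}(f)$ have the same roots, its defining set modulo $m$ is the set $T_{f}=\bigsqcup_{i=1}^{t}c_{m/q}(\gamma_{i})$ appearing in Theorem~\ref{thm 7} and in statement~(4); consequently the invariant produced by Corollary~\ref{coro 1} for $\mathrm{rad}(f)$ is exactly $d_{f}=\gcd(d_{1},\dots,d_{t-1},m,q-1)$ here. Corollary~\ref{coro 1} then states directly that $\mathrm{rad}(f)$ is free of binomials if and only if $d_{f}=1$, and that this is in turn equivalent to the minimal equal-difference set over $\mathbb{F}_{q}$ containing $T_{f}$ being all of $\mathbb{Z}/m\mathbb{Z}$. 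This closes the cycle $(1)\Leftrightarrow(2)\Leftrightarrow(3)\Leftrightarrow(4)$.

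As a cross-check one also obtains $(1)\Leftrightarrow(3)$ without reference to $\mathrm{rad}(f)$: Theorem~\ref{thm 7} gives $\mathrm{ord}^{\mathrm{b}}(f)=p^{u}m/d_{f}$, while the identity $\mathrm{ord}(f)=p^{u}m$ above shows $\mathrm{ord}^{\mathrm{b}}(f)=\mathrm{ord}(f)\iff d_{f}=1$. The step requiring the most care — though it is routine — is the order identity $\mathrm{ord}(f)=p^{u}\,\mathrm{ord}(\mathrm{rad}(f))$, together with the observation that the symbol $T_{f}$ in statement~(4) must be read as the defining set of $\mathrm{rad}(f)$; everything else is a direct appeal to Lemma~\ref{lem 3} and Corollary~\ref{coro 1}, so there is no serious obstacle and the argument is short.
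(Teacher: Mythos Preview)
Your proposal is correct and matches the paper's intended route: the paper states this corollary without proof, as an immediate consequence of Theorem~\ref{thm 7}, Lemma~\ref{lem 3}, and Corollary~\ref{coro 1}, and your argument invokes exactly these ingredients in the natural way. The only extra work you do --- verifying $\mathrm{ord}(f)=p^{u}m$ by the same $p$-power decomposition used in Lemma~\ref{lem 3} --- is a legitimate supporting detail the paper leaves tacit.
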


\section{Constacyclic codes of minimal distance $2$ over $\mathbb{F}_{q}$}\label{sec 1}
Let $N$ be a positive integer, and $\lambda \in \mathbb{F}_{q}^{\ast}$. In this section we apply the results obtained in the last two sections to determine all the $\lambda$-constacyclic codes of length $N$ over $\mathbb{F}_{q}$ which has minimal distance $2$. To deal with both simple-rooted and repeat-rooted constacyclic codes, we allow $N$ to be divisible by $p$.

Recall that a $\lambda$-constacyclic code $C$ of length $N$ can be identified with an ideal 
$$(f(X)) = \{f(X)g(X) \ | \ \mathrm{deg}(g(X)) < N-\mathrm{deg}(f(X))\} \subseteq \mathbb{F}_{q}[X]/(X^{N}-\lambda),$$
where $f(X)$ is a divisor of $X^{N}-\lambda$. The code $C = (f(X))$ has minimal distance $2$ if and only if it has an element of the form $X^{k}-\alpha X^{k^{\prime}}$, where $k^{\prime} < k <N$ and $\lambda \neq 0$. Note that $f(X)$ is coprime to $X$, therefore $f(X) \mid X^{k}-\alpha X^{k^{\prime}}$ amounts to $f(X) \mid X^{k-k^{\prime}}-\alpha$, i.e, 
$$X^{k-k^{\prime}}-\alpha \in (f(X)).$$
Together with Corollary \ref{coro 2}, we prove the following criterion for $f(X)$ to be of minimal distance $2$.

\begin{proposition}\label{prop 2}
	The $\lambda$-constacyclic code $(f(X))$ of length $N$ has minimal distance $2$ if and only if the minimal binomial multiple of $f(X)$ is a proper factor of $X^{N}-\lambda$.
\end{proposition}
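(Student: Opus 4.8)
The plan is to translate ``minimal distance $2$'' into a divisibility statement about binomials, and then apply Corollary~\ref{coro 2} to pin down exactly when such a binomial exists inside the ideal $(f(X))$. First I would recall the standard fact, already spelled out just above the statement, that $(f(X))$ has minimal distance $2$ if and only if it contains a nonzero codeword of the form $X^{k}-\alpha X^{k'}$ with $0\le k'<k<N$ and $\alpha\in\mathbb{F}_{q}$; here $\alpha\ne 0$ because otherwise $X^{k}\in(f(X))$ would force $f(X)\mid X^{k}$, contradicting $\gcd(f(X),X)=1$. Since $f(X)$ is coprime to $X$, the condition $f(X)\mid X^{k}-\alpha X^{k'}$ is equivalent to $f(X)\mid X^{k-k'}-\alpha$. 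Writing $j=k-k'$, we get: $(f(X))$ has minimal distance $2$ if and only if there exist $j<N$ and $\alpha\in\mathbb{F}_{q}^{\ast}$ with $f(X)\mid X^{j}-\alpha$, i.e.\ $X^{j}-\alpha\in(f(X))$.

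Next I would argue both directions using the minimal binomial multiple $X^{n}-\mu$ of $f(X)$, where $n=\mathrm{ord}^{\mathrm{b}}(f)$. For the ``if'' direction, suppose $X^{n}-\mu$ is a proper factor of $X^{N}-\lambda$. Then $n<N$ (a proper factor of $X^{N}-\lambda$ must have strictly smaller degree, since a binomial is determined by its degree together with its constant term, and $X^{N}-\lambda$ itself has degree $N$; alternatively $n\mid N$ and $n\ne N$, hence $n<N$). Moreover $f(X)\mid X^{n}-\mu$ by definition of the minimal binomial multiple, so $X^{n}-\mu$ is a codeword of the form $X^{n}-\mu X^{0}$ with $0<n<N$, giving minimal distance $\le 2$; and the distance is exactly $2$ because a distance-$1$ code would contain a monomial $\alpha X^{k}$, again impossible as $f(X)$ is coprime to $X$ and nonconstant. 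For the ``only if'' direction, suppose $(f(X))$ has minimal distance $2$, so by the reduction above there is a binomial $X^{j}-\alpha\in(f(X))$ with $j<N$ and $\alpha\ne 0$; in particular $f(X)\mid X^{j}-\alpha$ and also $f(X)\mid X^{N}-\lambda$. By Corollary~\ref{coro 2} (in its general form stated after Theorem~\ref{thm 7}), since $f(X)$ divides the binomial $X^{j}-\alpha$, the minimal binomial multiple $X^{n}-\mu$ satisfies $n\mid j$ and $X^{n}-\mu\mid X^{j}-\alpha$; applying the same corollary with $X^{N}-\lambda$ gives $X^{n}-\mu\mid X^{N}-\lambda$. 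It remains to check properness, i.e.\ $n<N$: this follows because $n\mid j$ and $j<N$ force $n\le j<N$. Hence $X^{n}-\mu$ is a proper factor of $X^{N}-\lambda$, as claimed.

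The routine parts are the code-theoretic translations (distance $2$ $\Leftrightarrow$ a two-term codeword $\Leftrightarrow$ a binomial in the ideal), which are essentially bookkeeping once one uses $\gcd(f,X)=1$ to rule out monomials and degenerate cases. The one genuine input is Corollary~\ref{coro 2}, which supplies the crucial implication ``$f\mid$ some binomial $\Rightarrow$ the minimal binomial multiple divides that binomial''; this is what makes the minimal binomial multiple the right object to test against $X^{N}-\lambda$ rather than having to search over all binomials. I expect the main (mild) obstacle to be the careful handling of the properness condition: one must argue that the relevant binomial has degree strictly less than $N$, which in the ``only if'' direction comes from $n\mid j$ with $j\le N-1$, and in the ``if'' direction is simply the hypothesis that the factor is proper; keeping straight that ``proper factor of $X^{N}-\lambda$'' for a binomial means exactly ``degree $<N$'' (equivalently $n\mid N$ with $n\ne N$) is the only place a small slip could occur.
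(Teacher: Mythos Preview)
Your proposal is correct and follows essentially the same approach as the paper: the paragraph preceding Proposition~\ref{prop 2} already reduces ``minimal distance $2$'' to the existence of a binomial $X^{k-k'}-\alpha$ of degree $<N$ in $(f(X))$, and then the paper simply invokes Corollary~\ref{coro 2} to conclude. You have spelled out the two directions and the properness check in more detail than the paper does, but the logical skeleton---translate to a binomial in the ideal, then use that the minimal binomial multiple divides every binomial multiple of $f$---is identical.
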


Now we compute the proper factors in the form of a binomial of $X^{N}-\lambda$. Write $N$ as $N = p^{v}N^{\prime}$, where $v = v_{p}(N) \geq 0$ and $n$ is coprime to $q$. Let $r$ be the order of $\lambda$, and let $t$ be the unique integer satisfying that $tp^{v} \equiv 1 \pmod{r}$ and $1 \leq t \leq r-1$. Then $X^{N}-\lambda$ can be expressed as
$$X^{N}-\lambda = (X^{n}-\lambda^{\prime})^{p^{v}},$$
where $\lambda^{\prime} = \lambda^{t}$. As stated in Section \ref{Sec 2}, there is a primitive $nr$-th root $\zeta_{nr}^{\delta}$ of unity such that $\zeta_{nr}^{\delta n}=\lambda^{\prime}$, where $\delta$ is a positive integer coprime to $nr$, and the defining set of $X^{n^{\prime}}-\lambda^{\prime}$ modulo $nr$ is
$$i_{m,nr}(T) = \{\delta,\delta+r,\cdots,\delta+r(n-1)\} = \delta + r\cdot\mathbb{Z}/nr\mathbb{Z}.$$
Here $m$ denotes the order of $X^{n}-\lambda^{\prime}$. It is trivial to see that any equal-difference subset of $i_{m,nr}(T)$ can be written as 
$$E = \{\gamma+rj,\gamma+rj+rd,\cdots,\gamma+rj+(\frac{n}{d}-1)rd\},$$
where $d$ is a positive divisor of $n$ and $0 \leq j \leq d-1$. By Lemma \ref{lem 1} the set $E$ is defined over $\mathbb{F}_{q}$ if and only if 
\begin{equation}\label{eq 5}
	(\gamma+rj)q \equiv \gamma+rj \pmod{rd}.
\end{equation}
As $r \mid q-1$, \eqref{eq 5} is equivalent to
\begin{equation}\label{eq 6}
	d \mid (\gamma+rj)\cdot\frac{q-1}{r}.
\end{equation}
Write $g_{d} = \mathrm{gcd}(d,\frac{q-1}{r})$. Since $\delta$ is coprime to $nr$, then $\delta+rj$ is coprime to $r$. It follows that \eqref{eq 6} holds if and only if $\mathrm{gcd}(\frac{d}{g_{d}},r)=1$ and 
$$rj \equiv -\delta \pmod{\frac{d}{g_{d}}}.$$

\begin{lemma}\label{lem 7}
	Let the notations be given as above. Then a proper factor, which is a binomial, of $X^{N}-\lambda$ is in either of the following form:
	\begin{description}
		\item[(1)] $(X^{n}-\lambda^{\prime})^{p^{u}}$, where $0 \leq u < v$;
		\item[(2)] $(X^{\frac{n}{d}}-\zeta_{nr}^{(\delta+rj)\frac{n}{d}})^{p^{w}}$, where $d > 1$ is a divisor of $n$ such that $\mathrm{gcd}(\frac{d}{g_{d}},r)=1$, $j$ is an integer such that $rj \equiv -\delta \pmod{\frac{d}{g_{d}}}$ and $0 \leq j \leq d-1$, and $0 \leq w \leq v$.
	\end{description}
\end{lemma}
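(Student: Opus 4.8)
The plan is to reduce to the squarefree case and then transcribe the description of the $\mathbb{F}_{q}$-rational equal-difference subsets of $i_{m,nr}(T)$ obtained just above. Let $B = X^{M}-\beta$ be an arbitrary monic binomial dividing $X^{N}-\lambda$, and write $M = p^{w}M'$ with $p\nmid M'$, so that $B = (X^{M'}-\beta')^{p^{w}}$ for the unique $\beta'\in\mathbb{F}_{q}^{\ast}$ with $\beta'^{p^{w}}=\beta$, and $g := X^{M'}-\beta' = \mathrm{rad}(B)$ is squarefree. Since $X^{N}-\lambda = (X^{n}-\lambda')^{p^{v}}$ with $X^{n}-\lambda'$ squarefree, a squarefree polynomial dividing a power of a squarefree polynomial must divide that polynomial, and comparing the multiplicities of a common irreducible factor then shows that $B \mid X^{N}-\lambda$ holds if and only if $g \mid X^{n}-\lambda'$ and $w \le v$ — the same reasoning used in the proof of Lemma \ref{lem 3}. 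It therefore suffices to classify the squarefree binomial divisors $g$ of $X^{n}-\lambda'$ and then raise them to the $p^{w}$-th power for $0 \le w \le v$.

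Next I would classify those $g$. Since $X^{n}-\lambda' \mid X^{nr}-1$ we have $\mathrm{ord}(g)\mid nr$, so by Lemma \ref{lem 2} the defining set of $g$ modulo $nr$ is a subset of $i_{m,nr}(T) = \delta + r\cdot\mathbb{Z}/nr\mathbb{Z}$; and by Theorem \ref{thm 1} (whose equal-difference conclusion is preserved when passing to the modulus $nr$) this defining set is an equal-difference subset of $\mathbb{Z}/nr\mathbb{Z}$. An equal-difference subset contained in $i_{m,nr}(T)$ is necessarily of the form $E = \{\delta+rj,\,\delta+rj+rd,\,\dots,\,\delta+rj+(\tfrac{n}{d}-1)rd\}$ for some divisor $d$ of $n$ and some $0\le j\le d-1$, and since $g$ is defined over $\mathbb{F}_{q}$, Lemma \ref{lem 1} forces $\gcd(\tfrac{d}{g_{d}},r)=1$ and $rj\equiv -\delta\pmod{\tfrac{d}{g_{d}}}$, exactly the conditions recorded before the statement. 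Conversely, for any such $(d,j)$ the set $E$ is an $\mathbb{F}_{q}$-rational equal-difference subset of $i_{m,nr}(T)$, so by the formula for $f_{E}$ computed in Section 3.1 together with Lemma \ref{lem 2} the binomial $f_{E}(X) = X^{n/d}-\zeta_{nr}^{(\delta+rj)n/d}$ it induces divides $X^{n}-\lambda'$. Hence the squarefree binomial divisors of $X^{n}-\lambda'$ are precisely the $X^{n/d}-\zeta_{nr}^{(\delta+rj)n/d}$ with $(d,j)$ as above, and the binomial divisors of $X^{N}-\lambda$ are precisely $(X^{n/d}-\zeta_{nr}^{(\delta+rj)n/d})^{p^{w}}$ with $0\le w\le v$.

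It then remains to single out the proper ones. Such a divisor equals $X^{N}-\lambda$ exactly when $d=1$ — which forces $j=0$ and $f_{E}=X^{n}-\lambda'$ — and $w=v$; discarding that case leaves either $d=1$ with $0\le w<v$, which is form (1), or $d>1$ with $0\le w\le v$, which is form (2) (and here properness is automatic, since the degree $p^{w}n/d$ is at most $p^{v}n/d<N$). The step I expect to require the most care is the first-paragraph reduction, i.e. making the multiplicity bookkeeping precise in the presence of repeated roots and confirming that $\mathrm{rad}(B)$ is again a binomial whose defining set lies inside $i_{m,nr}(T)$; once that is in place the remainder is a routine translation of the equal-difference parametrization into the triples $(d,j,w)$.
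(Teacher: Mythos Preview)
Your proposal is correct and follows essentially the same route as the paper: the paper's argument is the paragraph immediately preceding the lemma, which parametrizes the $\mathbb{F}_{q}$-rational equal-difference subsets of $i_{m,nr}(T)$ by pairs $(d,j)$ satisfying $\gcd(d/g_{d},r)=1$ and $rj\equiv -\delta\pmod{d/g_{d}}$, and then states the lemma without further proof. You reproduce this computation and additionally spell out the reduction to the squarefree case and the properness check, both of which the paper leaves implicit.
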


Now combining Theorem \ref{thm 2}, Proposition \ref{prop 2} and Lemma \ref{lem 7} gives the $\lambda$-constacyclic codes of length $N$ with minimal distance $2$. We write $n^{\prime} = \mathrm{gcd}(n,r)$ and 
$$n = n^{\prime}\cdotp_{1}^{e_{1}}\cdots p_{s}^{e_{s}},$$ 
and for any divisor $d > 1$ of $d$ such that $\mathrm{gcd}(\frac{d}{g_{d}},r)=1$ write $n_{d}^{\prime} = \mathrm{gcd}(\frac{n}{d},r)$ and 
$$\frac{n}{d} = n_{d}^{\prime}\cdot p_{d,1}^{e_{d,1}}\cdots p_{d,s_{d}}^{e_{d,s_{d}}}.$$
Here $p_{1},\cdots,p_{s}$ (resp. $p_{d,1},\cdots,p_{s,s_{d}}$) are pairwise distinct prime numbers not dividing $q$ and $r$, and $e_{1},\cdots,e_{s}$ (resp. $e_{d,1},\cdots,e_{s,s_{d}}$) are positive integers. We adopt the notations in Section \ref{Sec 2}.
 
\begin{theorem}\label{thm 6}
	With the notations as above, a $\lambda$-constacyclic code of length $N$ with minimal distance $2$ is either of the following form:
	\begin{description}
		\item[(1)] 
		$$\left(\prod_{\mathbf{x} \in \Gamma(n,r)}\prod_{\mathbf{m} \in [0,e_{1}]\times\cdots\times[0,e_{s}]}\prod_{R_{\mathbf{m},\mathbf{i},\mathbf{T}}(\delta_{\mathbf{x}}) \in \mathcal{CR}_{nr/q}^{\mathbf{m}}(\delta_{\mathbf{x}})}M_{R_{\mathbf{m},\mathbf{i},\mathbf{T}}(\delta_{\mathbf{x}})}(X)^{\Omega_{R_{\mathbf{m},\mathbf{i},\mathbf{T}}(\delta_{\mathbf{x}})}}\right),$$
		where $0 \leq \Omega_{R_{\mathbf{m},\mathbf{i},\mathbf{T}}(\delta_{\mathbf{x}})} \leq p^{v-1}$;
		\item[(2)] 
		$$\left(\prod_{\mathbf{x} \in \Gamma(\frac{n}{d},r)}\prod_{\mathbf{m} \in [0,e_{d,1}]\times\cdots\times[0,e_{d,s_{d}}]}\prod_{R_{\mathbf{m},\mathbf{i},\mathbf{T}}((\delta+rj)_{\mathbf{x}}) \in \mathcal{CR}_{\frac{nr}{d}/q}^{\mathbf{m}}((\delta+rj)_{\mathbf{x}})}M_{R_{\mathbf{m},\mathbf{i},\mathbf{T}}((\delta+rj)_{\mathbf{x}})}(X)^{\Omega_{R_{\mathbf{m},\mathbf{i},\mathbf{T}}((\delta+rj)_{\mathbf{x}})}}\right),$$
		where $d > 1$ is a divisor of $n$ such that $\mathrm{gcd}(\frac{d}{g_{d}},r)=1$, $j$ is an integer such that $rj \equiv -\delta \pmod{\frac{d}{g_{d}}}$ and $0 \leq j \leq d-1$, and $0 \leq \Omega_{R_{\mathbf{m},\mathbf{i},\mathbf{T}}((\delta+rj)_{\mathbf{x}})} \leq p^{v-1}$.
	\end{description}
	Here the polynomial $M_{R_{\mathbf{m},\mathbf{i},\mathbf{T}}(\delta_{\mathbf{x}})}(X)$ and $M_{R_{\mathbf{m},\mathbf{i},\mathbf{T}}((\delta+rj)_{\mathbf{x}})}(X)$ are given explicitly by Theorem \ref{thm 2}.
\end{theorem}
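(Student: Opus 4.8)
The strategy is to chain together Proposition~\ref{prop 2}, Lemma~\ref{lem 7}, and the explicit factorization of Theorem~\ref{thm 2}. First I would recast the condition ``$(f(X))$ has minimal distance $2$'' into a purely divisibility-theoretic statement. By Proposition~\ref{prop 2} this happens iff the minimal binomial multiple of $f(X)$ is a \emph{proper} factor of $X^{N}-\lambda$; I claim this is equivalent to requiring that $f(X)$ divide \emph{some} proper binomial factor of $X^{N}-\lambda$. Indeed, one direction is immediate (take the minimal binomial multiple itself), and for the converse, if $f(X)\mid B(X)$ with $B(X)$ a proper binomial factor of $X^{N}-\lambda$, then Corollary~\ref{coro 2} together with its generalization following Theorem~\ref{thm 7} shows that the minimal binomial multiple of $f(X)$ divides $B(X)$; hence it divides $X^{N}-\lambda$ and has degree at most $\deg B(X)<N$, so it is proper. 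Consequently the $\lambda$-constacyclic codes of length $N$ with minimal distance $2$ are exactly the codes $(f(X))$ for which $f(X)$ is a nonconstant divisor of some proper binomial factor of $X^{N}-\lambda$, and it remains to enumerate such $f(X)$.

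Next I would run through the two families of proper binomial factors provided by Lemma~\ref{lem 7}. For $B(X)=(X^{n}-\lambda')^{p^{u}}$ with $0\le u<v$: Theorem~\ref{thm 2}, read with the exponent $p^{u}$ in place of $p^{v}$, gives the irreducible factorization $(X^{n}-\lambda')^{p^{u}}=\prod M_{R_{\mathbf{m},\mathbf{i},\mathbf{T}}(\delta_{\mathbf{x}})}(X)^{p^{u}}$ over the index set $\bigsqcup_{\mathbf{x}\in\Gamma(n,r)}\bigsqcup_{\mathbf{m}}\mathcal{CR}^{\mathbf{m}}_{nr/q}(\delta_{\mathbf{x}})$, so the divisors of $B(X)$ are precisely the products $\prod M_{R_{\mathbf{m},\mathbf{i},\mathbf{T}}(\delta_{\mathbf{x}})}(X)^{\Omega}$ with $0\le\Omega\le p^{u}$; taking the union over $0\le u<v$ yields form~(1), with exponents in $[0,p^{v-1}]$. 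For $B(X)=(X^{n/d}-\zeta_{nr}^{(\delta+rj)n/d})^{p^{w}}$ (with $d,j,w$ in the ranges of Lemma~\ref{lem 7}): one first brings the squarefree binomial $X^{n/d}-\zeta_{nr}^{(\delta+rj)n/d}$ into the normalized shape of Subsection~\ref{Sec 2}, noting that, by the proof of Theorem~\ref{thm 1}, its defining set is the equal-difference set $\{\delta+rj,\ \delta+rj+rd,\ \ldots,\ \delta+rj+(\tfrac{n}{d}-1)rd\}$, which is the full preimage of the singleton coset of $\delta+rj\bmod r$ along the relevant projection; hence Theorem~\ref{thm 2} applies with $n$ replaced by $n/d$, $\delta$ by $\delta+rj$, and the prime decomposition $n/d=n_{d}'p_{d,1}^{e_{d,1}}\cdots p_{d,s_{d}}^{e_{d,s_{d}}}$ in force, giving $(X^{n/d}-\zeta_{nr}^{(\delta+rj)n/d})^{p^{w}}=\prod M_{R_{\mathbf{m},\mathbf{i},\mathbf{T}}((\delta+rj)_{\mathbf{x}})}(X)^{p^{w}}$ over $\bigsqcup_{\mathbf{x}\in\Gamma(\frac{n}{d},r)}\bigsqcup_{\mathbf{m}}\mathcal{CR}^{\mathbf{m}}_{\frac{nr}{d}/q}((\delta+rj)_{\mathbf{x}})$. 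Its divisors are $\prod M_{R_{\mathbf{m},\mathbf{i},\mathbf{T}}((\delta+rj)_{\mathbf{x}})}(X)^{\Omega}$ with $0\le\Omega\le p^{w}$, and the union over $0\le w\le v$ produces form~(2).

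Finally I would assemble the two families: every code with minimal distance $2$ divides some proper binomial factor of $X^{N}-\lambda$, hence by Lemma~\ref{lem 7} and the computation above appears in form~(1) or form~(2); conversely, each code of these two forms divides the corresponding proper binomial factor and therefore has minimal distance $2$. The only degenerate possibility to discard is the all-zero exponent tuple, giving $f(X)=1$, i.e.\ the whole ambient ring, which has minimal distance $1$.

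The main obstacle is the normalization in the second family, i.e.\ verifying that $X^{n/d}-\zeta_{nr}^{(\delta+rj)n/d}$ (and its $p$-power iterates) really is of exactly the shape to which the apparatus of Subsection~\ref{Sec 2} — the sets $\Gamma$, $\Sigma_{\ell^{u}n/n}$, $\mathcal{CR}^{\mathbf{m}}$ and the functions $R_{\mathbf{m},\mathbf{i},\mathbf{T}}$, $S_{\mathbf{m}}$, $\omega_{\mathbf{m}}$, $d_{\mathbf{m}}$ — was tailored. Concretely one must pin down the order of $\zeta_{nr}^{(\delta+rj)n/d}$ as an element of $\mathbb{F}_{q}^{\ast}$ (this is where the hypotheses $\gcd(d/g_{d},r)=1$ and $rj\equiv-\delta\pmod{d/g_{d}}$ enter, being precisely the conditions of Lemma~\ref{lem 1} for that binomial to be defined over $\mathbb{F}_{q}$), confirm $\gcd(n/d,q)=1$, and correctly identify the modulus $nr/d$ with its associated prime data. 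Once this is in place, the remainder is bookkeeping of the exponent ranges of $p$ under the two unions.
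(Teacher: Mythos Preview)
Your proposal is correct and follows exactly the route the paper takes: the paper's own ``proof'' is the single sentence ``Now combining Theorem~\ref{thm 2}, Proposition~\ref{prop 2} and Lemma~\ref{lem 7} gives the $\lambda$-constacyclic codes of length $N$ with minimal distance $2$,'' and you have simply unpacked that combination with the appropriate bookkeeping. One small point worth flagging: your derivation for family~(2) correctly gives exponents ranging over $[0,p^{v}]$ (since $0\le w\le v$ in Lemma~\ref{lem 7}), whereas the stated theorem caps them at $p^{v-1}$; this appears to be a typo in the statement rather than an error in your argument.
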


\begin{remark}
	Theorem \ref{thm 6} produces all the $\lambda$-constacyclic codes of length $N$ with minimal distance $2$, however, the codes given by $(1)$ and $(2)$ may repeat.
\end{remark}

\section*{Acknowledgment}
This work was supported by Natural Science Foundation of Beijing Municipal(M23017). 

\section*{Data availability}
Data sharing not applicable to this article as no datasets were generated or analysed during the current study.

\section*{Declaration of competing interest}
The authors declare that we have no known competing financial interests or personal relationships that 
could have appeared to influence the work reported in this paper.

\end{document}